\newtheorem{theorem}{Theorem}[section]
\newtheorem{thm}{Theorem}[section]
\newtheorem{lemma}[theorem]{Lemma}
\newtheorem{proposition}[theorem]{Proposition}
\theoremstyle{definition}
\newtheorem{definition}[theorem]{Definition}
\newtheorem{remark}[theorem]{Remark}
\numberwithin{equation}{section}
\newcommand{\del}{\nabla}
\newcommand{\cC}{{\mathcal C}}
\newcommand{\cH}{{\mathcal H}}
\newcommand{\cQ}{{\mathcal Q}}
\newcommand{\cR}{{\mathcal R}}
\newcommand{\field}[1]{\mathbb{#1}}
\newcommand{\N}{\field{N}}          		
\newcommand{\Z}{\field{Z}}          		
\newcommand{\R}{\field{R}}          		
\newcommand{\mass}{{\mathbf{M}}}
\newcommand{\abs}[1]{\lvert #1 \rvert}
\newcommand{\norm}[1]{\lVert #1 \rVert}
\newcommand{\loc}{{\rm loc}}
\newcommand\ang{\sphericalangle}
\DeclareMathOperator{\supp}{supp}
\DeclareMathOperator{\dist}{dist}
\DeclareMathOperator\diver{div}
\DeclareMathOperator{\Vol}{Vol}
\DeclareMathOperator\spt{spt}
\DeclareMathOperator\Lip{Lip}
\DeclareMathOperator{\bd}{bd}
\DeclareMathOperator{\cl}{cl}
\DeclareMathOperator{\ir}{int}
\DeclareMathOperator\Int{Int}
\DeclareMathOperator\cone{Cone}
\let\pois\setminus
\let\cal\mathcal
\begin{document}
\title[Convexity at infinity]{Convexity at infinity in Cartan-Hadamard manifolds and applications to the asymptotic Dirichlet and Plateau problems}
\author[Jean-Baptiste Casteras]{Jean-Baptiste Casteras}
\address{D\'epartement de Math\'ematique, Universit\'e Libre de Bruxelles, CP 214, Boulevard du Triomphe, B-1050 Bruxelles, Belgium}
\email{jeanbaptiste.casteras@gmail.com}
\author[Ilkka Holopainen]{Ilkka Holopainen}
\address{Department of Mathematics and Statistics, P.O. Box 68, 00014 University of
Helsinki, Finland}
\email{ilkka.holopainen@helsinki.fi}
\author[Jaime B. Ripoll]{Jaime B. Ripoll}
\address{UFRGS, Instituto de Matem\'atica, Av. Bento Goncalves 9500, 91540-000 Porto
Alegre-RS, Brasil}
\email{jaime.ripoll@ufrgs.br}
\thanks{J.-B.C. supported by the FNRS project MIS F.4508.14; I.H. supported by the Academy of 
Finland, project 252293; J.R. supported by the CNPq (Brazil) project 302955/2011-9}
\subjclass[2000]{53C20, 53C21, 58J32}
\keywords{Hadamard manifolds, asymptotic Dirichlet problem, asymptotic Plateau problem}

\begin{abstract}
We study the asymptotic Dirichlet and Plateau problems on Cartan-Hadamard manifolds satisfying the so-called Strict Convexity (abbr. SC) condition.
The main part of the paper consists in studying the SC condition on a manifold whose sectional curvatures are bounded from above and below by certain 
functions depending on the distance to a fixed point. In particular, we are able to verify the SC condition on manifolds whose curvature 
lower bound can go to $-\infty$ and upper bound to $0$ simultaneously at certain rates, or on some manifolds whose sectional curvatures go to 
$-\infty$ faster than any prescribed rate. These improve previous results of Anderson, Borb\'ely, and Ripoll and Telichevsky.
We then solve the asymptotic Plateau problem for locally rectifiable currents with $\Z_2$-multiplicity in a Cartan-Hadamard manifold satisfying the SC 
condition given any compact topologically embedded $(k-1)$-dimensional submanifold of $\partial_{\infty}M,\ 2\le k\le n-1$, as the boundary data. We also 
solve the asymptotic Plateau problem for locally rectifiable currents with $\Z$-multiplicity on any rotationally symmetric manifold satisfying the SC 
condition given a smoothly embedded submanifold as the boundary data. These generalize previous results of Anderson, Bangert, and Lang.  Moreover, we 
obtain new results on the asymptotic Dirichlet problem for a large class of PDEs. In particular, we are able to prove the solvability of this problem on 
manifolds with super-exponential decay (to $-\infty$) of the curvature.
\end{abstract}

\maketitle

\section{Introduction}In this paper we investigate the notion of convexity at infinity and its applications for a class of Cartan-Hadamard manifolds. We recall that a Cartan-Hadamard manifold $M$ is a complete, connected, and simply connected Riemannian $n$-manifold, $n \geq 2$, of non-positive sectional curvature. It is well-known that a Cartan-Hadamard manifold $M$ can be compactified in the cone topology by adding a sphere at infinity, also called the asymptotic boundary of $M$; we refer to \cite{EO} for more details. In the following, we will denote by $\partial_\infty M$ the sphere at infinity and 
by $\overline{M}=M\cup\partial_\infty M$ the compactification of $M$.

It is well-known that the notion of convexity at infinity plays an important role in the solvability of the asymptotic Dirichlet and Plateau problems. Let 
us briefly introduce these two problems. The asymptotic Plateau problem in $M$ basically consists in finding an 
absolutely area minimizing $k$-dimensional submanifold $\Sigma\subset M$ asymptotic to given $(k-1)$-dimensional submanifold  
$\Gamma\subset\partial_{\infty}M,\ \ 2\leq k\leq \dim M-1$.
If such $\Sigma$ exists we say that the asymptotic Plateau problem is solvable in $M$ for $\Gamma$. 
On the other hand, the asymptotic Dirichlet problem on $M$ for an operator $\cQ$ is the following: 
Given a continuous function
$h$ on $\partial_\infty M$, does there exist a (unique) function $u\in C(\overline{M})$ such that $\cQ [u] = 0$ on
$M$ and $u|\partial_\infty M = h$? 

The connection between convexity at infinity and the solvability of the Dirichlet problem was first pointed out by Choi in \cite{choi} 
(see Remark~\ref{kas}). 
He proved that if $M$ satisfies the "convex conic neighborhood condition" (see the definition below) and if the sectional curvature $K_M$ of $M$ 
is bounded from above by a negative constant, then the asymptotic Dirichlet problem for the Laplacian admits a solution. Let us recall Choi's convexity condition:

\begin{definition}[The convex conic neighborhood condition]
Let $M$ be a Hadamard manifold. We say that $M$ satisfies the convex conic neighborhood condition if, given $x\in \partial_\infty M$, for any $y\in \partial_\infty M\setminus\{x\}$, there
exist $V_x\subset \overline{M}$, a neighborhood of $x$, and $V_y\subset \overline{M}$, a neighborhood of $y$ such that $V_x$ and $V_y$ are disjoint open sets of $\overline{M}$ in terms of the cone topology and $V_x \cap M$ is convex with $C^2$ boundary.
\end{definition}

The use of the convex conic neighborhood condition by Choi for solving the asymptotic Dirichlet problem for the Laplacian is heavily based on the linearity 
of the Laplace operator and does not apply directly to other PDEs like the $p$-Laplacian and the minimal graph equation. To deal with more general PDEs, 
Ripoll and Telichevesky (\cite{RT}) recently introduced another notion of convexity at infinity which was called the Strict Convexity condition. 
Intuitively, this condition insures that  we can extract from $\overline{M}$ a neighborhood of any point of the sphere at infinity so that the remaining is 
convex. The precise definition of this notion is the following:
\begin{definition}[SC condition]
Let $M$ be a Hadamard manifold. We say that $M$ satisfies the \emph{Strict
Convexity condition (SC condition)} if, given $x\in\partial_{\infty}M$ and a
relatively open subset $W\subset\partial_{\infty}M$ containing $x,$ there
exists a $C^{2}$ open subset $\Omega\subset M$ such that
$x\in\ir \partial_{\infty}\Omega  \subset W$ and $M\setminus\Omega$ is convex. Here $\ir \partial_{\infty}\Omega$ denotes
the interior of $\partial_{\infty}\Omega$ in $\partial_{\infty}M$.
\end{definition}

It turns out that the SC condition can be used to prove the solvability of the asymptotic Dirichlet problem for a large class of PDEs which includes, besides the Laplacian, the $p$-Laplacian and the minimal graph equation. 
Now we would like to give a brief overview of the earlier results concerning the class of manifolds where either the SC or the convex conic neighborhood conditions hold. 

The first result in this direction was obtained by Anderson in \cite{andJDG}. There he proved that the convex conic neighborhood condition is satisfied on a manifold $M$ if the sectional curvatures are pinched i.e. $-k_1\leq K_M\leq -k_2$, for some constants $k_1\geq k_2>0$. A few years later,  Borb\'ely in \cite{borbpams} improved the curvature assumption of the previous result by modifying the construction of Anderson. Namely he assumed that $K_M\leq -1$ and
\[
\min_{\rho (x) \leq R} |K_M (x)|\leq e^{\lambda \rho (x)},\ \text{if}\ R>R_0\ \text{and for some}\ \lambda<\dfrac{1}{3}, 
\]
where $\rho (x)$ stands for the distance of $x\in M$ to a fixed point of $M$. Concerning the SC condition, Ripoll and Telichevesky in \cite{RT} proved that it is satisfied on rotationally symmetric manifolds satisfying $K_M\leq -k^2$, $k>0$. They also adapted the construction of Anderson and proved that the SC condition holds if
\[
 -\dfrac{e^{2k\rho(x)}}{\rho(x)^{2+2\varepsilon}}\leq K_M (x) \leq -k^2,
 \]
for all $x\in M$ such that $\rho (x)=d(x,o)\geq R^\ast $, $R^\ast$ large enough, and where $k$ and $\varepsilon$ are positive constants. \\

The first part of this paper is devoted to generalize these previous results in two directions: we allow the lower bound of the sectional curvature to go 
to $0$ at a certain rate and, on the other hand, we allow it to go to $-\infty$. Before giving our precise assumptions on the sectional curvature, we state two results as special cases of our main theorem.
\begin{thm}
\label{thmSCex1}
Let $M$ be a Hadamard manifold and let $o\in M$. Suppose that there exists $R^\ast >0$ such that, for some constants 
$\varepsilon>\tilde{\varepsilon}>0$,
 \begin{equation*}
-\dfrac{(\log \rho(x))^{2\tilde{\varepsilon}}}{\rho(x)^2} \le K_M(x) \le -\dfrac{1+\varepsilon}{ \rho(x)^2\log \rho(x)},
\end{equation*}
for all $x\in M$, with $\rho (x)=d(x,o)\geq R^\ast $. Then $M$ satisfies the\textrm{ SC} condition.
\end{thm}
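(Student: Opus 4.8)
The plan is to construct, for a given $x\in\partial_\infty M$ and a relatively open neighbourhood $W\subset\partial_\infty M$ of $x$, an explicit $C^2$ domain $\Omega$ whose complement is convex, by working with functions of the distance $\rho$ and an angular variable measured from the geodesic ray $\gamma$ representing $x$. More precisely, I would follow the Anderson–Borbély–Ripoll--Telichevsky scheme: fix the ray $\gamma$ from $o$ with $\gamma(\infty)=x$, and for $p\in M$ write $p$ in "polar-type" coordinates $(\rho(p),\theta(p))$ where $\theta(p)=\angle_o(\gamma(1),p)$ is the comparison angle at $o$. One then seeks $\Omega$ of the form $\{p : \theta(p) < \varphi(\rho(p))\}$ for a suitable increasing, bounded, concave-in-the-right-sense profile function $\varphi:[R,\infty)\to(0,\pi)$ with $\varphi(\infty)$ small enough that $\partial_\infty\Omega\subset W$. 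The boundary hypersurface $\partial\Omega$ is then a rotationally-swept graph, and the task reduces to choosing $\varphi$ so that the second fundamental form of $\partial\Omega$, with respect to the outward normal (pointing away from $\gamma$), is nonnegative — i.e. the hypersurface is convex as seen from the $\Omega$-side, which makes $M\setminus\Omega$ convex.

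The key computation is the estimate of the Hessian of $\rho$ and of the angle function $\theta$, which is controlled by the Jacobi-field/Rauch comparison with the prescribed curvature bounds. With $K_M(x)\le -\dfrac{1+\varepsilon}{\rho^2\log\rho}$, the relevant Jacobi equation $f'' + K f = 0$ has solutions growing slightly faster than $\rho$ (the critical rate for the operator with potential $(1+\varepsilon)/(\rho^2\log\rho)$ is essentially $\rho(\log\rho)^{(1+\varepsilon)/2}$ up to lower order), which is what forces the convexity to "kick in"; the lower bound $K_M(x)\ge -\dfrac{(\log\rho)^{2\tilde\varepsilon}}{\rho^2}$ with $\tilde\varepsilon<\varepsilon$ is used to control the transversal spreading of geodesics from the $\Omega$-side so that the angular term in the second fundamental form does not overwhelm the favourable $\rho$-Hessian term. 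The strategy is: (i) write $\partial\Omega$ as a level set $F(p)=\theta(p)-\varphi(\rho(p))=0$; (ii) compute $\nabla^2 F$ restricted to $\ker dF$ using the comparison estimates for $\nabla^2\rho$ and $\nabla^2\theta$; (iii) extract a differential inequality for $\varphi$ of the form $\varphi''(\rho) \le$ (something involving $-\varphi'$ times the Hessian-of-$\rho$ comparison factor) $+$ (an angular error term controlled by the lower curvature bound); (iv) exhibit a concrete $\varphi$ — something like $\varphi(\rho)=\varphi(\infty) - c\int_\rho^\infty \dfrac{dt}{t(\log t)^{(1+\varepsilon)/2}}$ or a comparable primitive — that satisfies this inequality for $\rho\ge R^\ast$ (enlarging $R^\ast$ if necessary) and has the prescribed limit; (v) finally check the $C^2$ regularity near the "pole" issue — either $\gamma$ lies inside $\Omega$ and $\partial\Omega$ is smooth, or one truncates and smooths near $\rho=R$, which is harmless since convexity is only required asymptotically for membership of $\partial_\infty\Omega$ in $W$ and can be arranged by a standard rounding.

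Since the problem statement says this theorem is a special case of a main theorem proved later, I would actually prove a general statement with curvature bounds $-(h_1(\rho))^2 \le K_M \le -(h_2(\rho))^2$ and a structural hypothesis relating $h_1, h_2$ (a Grönwall/growth comparison between the corresponding Jacobi solutions), then verify that $h_1(\rho)^2=(\log\rho)^{2\tilde\varepsilon}/\rho^2$ and $h_2(\rho)^2=(1+\varepsilon)/(\rho^2\log\rho)$ satisfy that hypothesis — a routine but slightly delicate asymptotic check exploiting $\tilde\varepsilon<\varepsilon$. The main obstacle I expect is step (ii)–(iv): getting a clean, sign-definite lower bound for $\nabla^2 F|_{\ker dF}$, because the angle function $\theta$ is not a distance function and its Hessian involves both the transversal Jacobi fields along $\gamma$ and the mixed $\partial_\rho\partial_\theta$ terms; controlling the error term coming from the gap between the upper and lower curvature bounds, and showing it is absorbed by the margin $\varepsilon-\tilde\varepsilon>0$, is the crux. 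Everything else — the choice of $\varphi$, the limit computation, the $C^2$ smoothing — is comparatively mechanical once the differential inequality is in hand.
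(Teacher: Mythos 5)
Your overall strategy matches the paper's: this theorem is indeed proved as a special case of a general result (Theorem~\ref{thmSC}, via Proposition~\ref{3ndprop}), obtained by building a level-set domain out of a radial variable and an angular variable and then verifying Hessian positivity on the boundary. However, the step you yourself flag as ``the crux'' --- obtaining a usable, sign-definite estimate for the second fundamental form of a level set of the \emph{raw} angle function $\theta(p)=\sphericalangle_o(x_0,p)$ --- is a genuine gap, and it is precisely what the paper spends all of Section~\ref{sec:constext} circumventing. The raw $\theta$ has a clean \emph{gradient} bound $|\nabla\theta|\le 1/f_a(\rho)$ from Rauch comparison (Lemma~\ref{kulma_arvio}), but its Hessian involves derivatives of Jacobi fields that the two-sided curvature pinch does not control in the form your step (iii) needs. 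The paper's fix is not to use $\theta$ directly: starting from the crude angular extension $\tilde h$ of \eqref{eq:hoodeftilde}, one mollifies at a curvature-dependent scale $\sim 1/b(\rho)$ to produce $h=\mathcal{P}(\tilde h)\in C^\infty(M)\cap C(\overline M)$ and proves (Lemma~\ref{arvio_lause}) the crucial estimates $|\nabla h|\le c_4/f_a(\lambda\rho)$ and $\|D^2h\|\le c_4\,b(\rho)/f_a(\lambda\rho)$; the extra factor $b(\rho)$ in the Hessian bound, which comes from differentiating the mollifier, is exactly what the pinching hypothesis is tuned to beat. With these estimates in hand, the paper takes a level set of the \emph{product} $\varphi(h(x))\,g(\rho(x))$, with $g(\rho)=(\log\rho)^{\alpha}$ and $\varphi$ a cut-off, rather than of the \emph{difference} $\theta-\varphi(\rho)$ as you propose; the product form is more convenient because on the boundary $\varphi(h)=R/g(\rho)$, which lets all cross-terms be expressed in terms of $\langle X,\nabla h\rangle^2$ alone (see \eqref{estsce2}). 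So your plan is structurally right, but you would not be able to carry out steps (ii)--(iv) with the unmodified $\theta$; the mollification is the missing ingredient, not a cosmetic convenience.

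A minor arithmetic slip: with $a^2(t)\ge (1+\varepsilon)/(t^2\log t)$, the Jacobi solution grows like $t(\log t)^{1+\varepsilon}$, not $t(\log t)^{(1+\varepsilon)/2}$ as you state. Indeed Proposition~\ref{jacest} (due to Choi) gives $f_a(t)\ge t(\log t)^{1+\varepsilon_1}$ for any $\varepsilon_1<\varepsilon$, and it is this sharper growth that makes the favourable $\rho$-Hessian term in \eqref{1stest} dominate; your weaker exponent would not leave enough room against the Hessian-of-$h$ error term \eqref{2ndest} unless $\tilde\varepsilon$ were much smaller than stated.
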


\begin{thm}
\label{thmSCex2}
Let $M$ be a Hadamard manifold and let $o\in M$. Suppose that there exists $R^\ast >0$ such that, for some constants 
$\phi>1/4,\ \epsilon>0$, and $c>0$,
 \begin{equation*}
-c\,e^{(2-\epsilon)\rho(x)}e^{\displaystyle{e^{\rho(x)/e^3}}} \le K_M(x) \le -\phi e^{2\rho (x)},
\end{equation*}
for all $x\in M$, with $\rho (x)=d(x,o)\geq R^\ast $. Then $M$ satisfies the\textrm{ SC} condition.
\end{thm}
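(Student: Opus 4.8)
The strategy is to deduce Theorem~\ref{thmSCex2} from the general main theorem of the paper (the one of which Theorems~\ref{thmSCex1} and \ref{thmSCex2} are advertised as ``special cases''), so the real content is to check that the stated pinching condition
\[
-c\,e^{(2-\epsilon)\rho(x)}e^{e^{\rho(x)/e^3}} \le K_M(x) \le -\phi\,e^{2\rho(x)}
\]
fits into the abstract hypotheses of that theorem. Thus the first step is to recall the general statement: it should assert that the SC condition holds provided $K_M$ is squeezed between $-f(\rho)^2$ below and $-a(\rho)^2$ above for comparison functions $f\ge a>0$ satisfying certain ODE-type growth balance conditions (typically: $a$ increasing, $f/a$ controlled, and an integrability or logarithmic-derivative condition coupling $f$ to $a$, ensuring that a suitably constructed convex exhaustion has the required asymptotic boundary behaviour). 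I would begin by setting $a(t)^2=\phi\,e^{2t}$, i.e. $a(t)=\sqrt{\phi}\,e^{t}$, and $f(t)^2=c\,e^{(2-\epsilon)t}e^{e^{t/e^3}}$, i.e. $f(t)=\sqrt{c}\,e^{(1-\epsilon/2)t}\exp\!\bigl(\tfrac12 e^{t/e^3}\bigr)$.

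The second step is to verify the upper-bound hypotheses. Since $K_M\le -\phi e^{2\rho}\le -\phi e^{2R^\ast}<0$ for $\rho\ge R^\ast$, the manifold has sectional curvature bounded above by a negative constant, and the comparison function $a(t)=\sqrt\phi\,e^t$ is smooth, positive, increasing, and log-convex; the condition $\phi>1/4$ is exactly what makes the solution of $h''=a^2h$ grow like $e^{(1/2+\sqrt{\phi-1/4}\,\cdot)}$... more simply, $\phi>1/4$ guarantees that the relevant Jacobi/Riccati comparison produces a barrier decaying fast enough at infinity (this is the same threshold that appears in the rotationally symmetric case $K_M\le -k^2$ with the borderline rate $e^{2\rho}$). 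The third step, which I expect to be the main obstacle, is the lower-bound side: one must check the coupling condition between $f$ and $a$ demanded by the general theorem. The dangerous feature is the doubly-exponential factor $\exp(\tfrac12 e^{t/e^3})$ in $f$; the general theorem presumably only tolerates lower curvature bounds going to $-\infty$ at a controlled super-exponential rate, and the constants $2-\epsilon$ (strictly less than $2$) and the inner rate $1/e^3$ are tuned precisely so that the logarithmic derivative $f'/f$ and the integral $\int^\infty a/f$ (or whatever the precise quantity is) stay within the admissible range. Concretely I would compute
\[
\frac{f'(t)}{f(t)} = 1-\frac\epsilon2 + \frac{1}{2e^3}\,e^{t/e^3},
\]
observe that $f(t)/a(t)=\sqrt{c/\phi}\;e^{-\epsilon t/2}\exp\!\bigl(\tfrac12 e^{t/e^3}\bigr)\to\infty$, and then feed these into the hypothesis; the slack $\epsilon>0$ in the exponent of $e^{(2-\epsilon)t}$ is what supplies the needed decay of $a/f$ against the growth of $f'/f$, and the specific constant $e^3$ (so that $t/e^3$ grows slowly) is what keeps the doubly-exponential term from overwhelming the estimate.

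Finally, once the comparison functions $a$ and $f$ are shown to satisfy all hypotheses of the general theorem on $\{\rho\ge R^\ast\}$ (shrinking $R^\ast$-type constants as needed, and using that $f\ge a$ there, which one checks directly from $\exp(\tfrac12 e^{t/e^3})\ge e^{\epsilon t/2}$ for $t$ large), the conclusion of the general theorem is exactly that $M$ satisfies the SC condition, which completes the proof. The only genuinely delicate point is the bookkeeping in step three: matching our explicit $f,a$ against the abstract growth conditions and confirming that the exponents $2-\epsilon$, $2$, and the inner exponent $1/e^3$ — together with $\phi>1/4$ — all lie on the correct side of the thresholds. Everything else is routine substitution and monotonicity.
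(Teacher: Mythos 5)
Your strategic framing is right: Theorem~\ref{thmSCex2} is deduced from Theorem~\ref{thmSC} by exhibiting comparison functions $a,b$ that satisfy one of the pinching alternatives. But there is a genuine gap in how you propose to set this up, and it obscures the one real trick the paper uses. You take $a(t)=\sqrt{\phi}\,e^t$ directly from the stated upper curvature bound. However, both alternatives (1) and (2) of Theorem~\ref{thmSC} are phrased not in terms of $a$ and $b$ alone but in terms of $f_a$ --- the solution of $f_a''=a^2f_a$, $f_a(0)=0$, $f_a'(0)=1$ --- through quantities such as $f_a'(t)/f_a(t)$, $f_a(\lambda t)/f_a(t)$, $f_a(t-t_0)/f_a(t)$, and $f_a(t-2)/\bigl(f_a'(t-2)f_a(t-3)\bigr)$. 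With your choice of $a$, $f_a$ is a modified Bessel function in the variable $e^t$, and the shifted ratios above require careful asymptotics you never produce. The quantities you actually compute, $b'/b$ and $b/a$, do not appear in the hypotheses of Theorem~\ref{thmSC}, so ``feeding these into the hypothesis'' does not close the argument.

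The paper's route reverses the construction: it chooses $f_a(t)=\sinh(\sinh t)$ from the start, so that $a^2=f_a''/f_a=\cosh^2 t+\sinh t\coth(\sinh t)$ is elementary, with $a^2(t)\sim e^{2t}/4$; this is precisely where the threshold $\phi>1/4$ comes from (one needs $\phi e^{2t}>a^2(t)$ eventually so that $K_M\le-\phi e^{2\rho}$ implies $K_M\le-a^2(\rho)$), not from any spectral gap in a Riccati equation as you suggest. With $f_a=\sinh\circ\sinh$ one then has the explicit asymptotics $f_a(t)\sim\tfrac12 e^{e^t/2}$, $f_a'(t)\sim\tfrac14 e^{t}e^{e^t/2}$, and checking alternative (2) with $b^2(t)=c\,e^{(2-\epsilon)t}e^{e^{t/e^3}}$ reduces to the elementary observation that $t/e^3<t-3$ for $t$ large, so the factor $e^{\tfrac12(e^{t/e^3}-e^{t-3})}$ kills everything and the limit in \eqref{pinch} is $0$. (The role of the inner rate $1/e^3$ is exactly this: anything strictly less than $1$ would do; $e^{3}$ is just a clean choice that beats the shift $t\mapsto t-3$.) If you want to keep your choice $a=\sqrt{\phi}\,e^t$ you would have to supply the $I_0$-asymptotics for $f_a$ and redo the shifted ratios, which is possible but strictly more work; you should at least flag that this is the nontrivial content, and not present $b'/b$ and $b/a$ as if they were the quantities being tested. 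As it stands the proposal has the right top-level plan but does not verify the hypothesis it invokes.
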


More generally, our main result is the following (see \eqref{Jacobi_eq} for the definition of $f_a$):
 
\begin{thm}
\label{thmSC}
Let $M$ be a Hadamard manifold and let $o\in M$. Suppose that there exists $R^\ast >0$ such that
 \begin{equation}\label{aRcondition1}
-b^{2} (\rho (x)) \le K_M(x) \le -a^2(\rho (x)),
\end{equation}
for all $x\in M$, with $\rho (x)=d(x,o)\geq R^\ast $ and $b$ monotonic. Moreover, assume that at least one of the following alternatives holds:
\begin{enumerate}
\item for some constants  $C_1>0,\ \tilde{\varepsilon},2\alpha\in (0,\varepsilon),\ t_0>0$, and $0<\lambda<1$, we have for $t$ large enough
that
\[
 a^2(t)\geq \dfrac{1+\varepsilon}{ t^2\log t},
 \] 
$b(t/2)\le C_1 b(t)$, $b(t+1)\leq C_1 b(t)$, and 
\begin{equation}
\label{pinch1}
\frac{(\log t)^{\tilde{\varepsilon}}}{t}\le b(t) \le 
\begin{cases}
\dfrac{f_a^\prime (t)}{t (\log t)^{1+2\alpha}}\dfrac{f_a (\lambda t)}{f_a (t)},&\\&\\
\dfrac{f_a^\prime (t)}{t (\log t)^{1+2\alpha}}\dfrac{f_a (t-t_0)}{f_a (t)},\text{ if, in addition, $b$ is increasing};
\end{cases}
\end{equation}
\item or $a$ and $b$ are increasing continuous functions such that $b(0)>0$ and
\begin{equation}
\label{pinch}
\lim_{t\to\infty}
\dfrac{t (\log t)^{1+\varepsilon}f_a( t-2) b(t)}{f_a^\prime (t-2) f_a (t-3)}<\infty .
\end{equation}
\end{enumerate}
Then $M$ satisfies the\textrm{ SC} condition.
\end{thm}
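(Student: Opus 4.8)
The plan is to verify the SC condition directly by constructing, for a given $x\in\partial_\infty M$ and a relatively open neighbourhood $W\subset\partial_\infty M$ of $x$, a $C^2$ open set $\Omega\subset M$ with $x\in\ir\partial_\infty\Omega\subset W$ whose complement $M\setminus\Omega$ is convex. I would follow the Anderson--Borb\'ely--Ripoll--Telichevsky scheme: fix a geodesic ray $\gamma$ from $o$ representing $x$, and look for $\Omega$ as a sublevel set $\{\varphi<0\}$ of a function $\varphi$ built from the Busemann-type data along $\gamma$, e.g. a suitably modified function of the form $\varphi(y)=\langle\text{angular part}\rangle - g(\rho(y))$, where $g$ is a radial profile to be chosen. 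Convexity of $M\setminus\Omega$ then amounts to the inequality $\mathrm{Hess}\,\varphi\ge 0$ on $\{\varphi=0\}$ in directions tangent to the level set, while the condition $\ir\partial_\infty\Omega\subset W$ is a matching condition controlling how fast the angular aperture of $\Omega$ shrinks as $\rho\to\infty$: we need the "width" of $\Omega$ at infinity to lie inside $W$ but still contain $x$ in its interior.

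The key analytic input is the Hessian comparison theorem under the two-sided bound \eqref{aRcondition1}. Writing $f_a$ for the solution of the Jacobi equation $f_a''=a^2 f_a$ (the notation referenced at \eqref{Jacobi_eq}), the upper curvature bound $K_M\le -a^2(\rho)$ gives a lower bound $\mathrm{Hess}\,\rho\ge (f_a'/f_a)\,(g_M-d\rho\otimes d\rho)$ in the radial comparison, which is what makes geodesic spheres (and radial graphs over them) convex enough; the lower bound $K_M\ge -b^2(\rho)$ gives the matching upper bound $\mathrm{Hess}\,\rho\le (f_b'/f_b)(\cdots)$ needed to control error terms and the behaviour of the angular coordinates. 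First I would set up the precise form of $\varphi$ — the natural choice is $\varphi(y)=\theta(y)^2 - C\,h(\rho(y))$ or an analogous expression where $\theta$ measures angular distance to $\gamma$ and $h$ is increasing — and compute $\mathrm{Hess}\,\varphi$ restricted to the tangent space of the level set, using the comparison estimates to bound each term. The convexity inequality reduces, after this computation, to a differential inequality for the profile $h$ (or $g$) in terms of $a$, $b$, and $f_a$; this is precisely where the quantities $f_a'(t)/(t(\log t)^{1+2\alpha})$ and the ratios $f_a(\lambda t)/f_a(t)$, $f_a(t-t_0)/f_a(t)$ in \eqref{pinch1} and the limit condition \eqref{pinch} enter: they are exactly the hypotheses that guarantee such an $h$ exists with the right growth, so that both the convexity inequality and the containment $\partial_\infty\Omega\subset W$ can be met simultaneously.

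Concretely I would proceed in steps: (i) reduce to constructing the barrier $\varphi$ and state the two requirements (convexity of $\{\varphi\ge0\}$ near its boundary, and the asymptotic aperture condition) as explicit inequalities; (ii) invoke the Hessian comparison theorem with the bounds $a,b$ to express $\mathrm{Hess}\,\varphi$ on level sets in terms of $f_a$, $f_b$ and derivatives of the profile; (iii) choose the profile $h$ (distinguishing the two alternatives of the theorem — in case (1) a profile tied to $f_a(\lambda t)/f_a(t)$ or, when $b$ is increasing, to $f_a(t-t_0)/f_a(t)$, the lower bound $b(t)\ge(\log t)^{\tilde\varepsilon}/t$ together with $a^2(t)\ge(1+\varepsilon)/(t^2\log t)$ ensuring the needed integrability and comparability, with the doubling-type bounds $b(t/2)\le C_1b(t)$, $b(t+1)\le C_1b(t)$ used to absorb shifts; in case (2) a profile whose existence follows from the finiteness of the limit \eqref{pinch}); (iv) verify the convexity differential inequality holds for $\rho\ge R^\ast$ with this choice, smoothing $\varphi$ to $C^2$ if necessary; (v) check that the resulting $\Omega=\{\varphi<0\}$ has $x\in\ir\partial_\infty\Omega\subset W$ by estimating its trace at infinity. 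The main obstacle is step (iv) together with the simultaneous fulfilment of both inequalities in step (i): the convexity estimate wants the profile to grow slowly (so that curvature of the level sets stays nonnegative), while the containment in $W$ wants it to grow fast enough that the angular aperture collapses into $W$; reconciling these is exactly the content of the delicate pinching conditions \eqref{pinch1}–\eqref{pinch}, and carrying out the estimate will require careful bookkeeping of the comparison functions $f_a,f_b$ and their logarithmic corrections, as well as treating the boundary cases where the lower curvature bound degenerates to $0$ or blows up super-exponentially.
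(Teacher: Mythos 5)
The general outline you describe — convex sublevel sets of a function built from angular and radial data, together with Hessian comparison and the pinching conditions to close the estimates — correctly captures the spirit of the proof, and your step (i)--(v) plan is roughly how the paper handles alternative~(1). But there are two genuine gaps that would stop this proposal from going through as written.

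First, in case~(1), the ``angular part'' cannot be the raw angle $\theta(y)=\sphericalangle_o(x_0,y)$, because the hypotheses only provide two-sided curvature bounds, which control $|\nabla\theta|$ (via Rauch/Jacobi comparison, as in Lemma~\ref{kulma_arvio}) but give no pointwise control of $D^2\theta$. Estimating the Hessian of the angle directly would require derivative bounds on the curvature that are not assumed. The paper spends all of Section~\ref{sec:constext} replacing $\theta$ by a carefully constructed smoothing $h=\cP(\tilde h)$, obtained by integrating a crude extension $\tilde h$ against a kernel whose scale varies like $1/b(\rho)$; this is precisely what produces the Hessian bound $\|D^2 h\|\le c_4\,b(\rho)/f_a(\lambda\rho)$ in Lemma~\ref{arvio_lause}, and it is where the doubling conditions $b(t/2)\le C_1 b(t)$, $b(t+1)\le C_1 b(t)$ and the lower bound $b(t)\ge (\log t)^{\tilde\varepsilon}/t$ from \eqref{pinch1} are actually used (Lemmas~\ref{vah_apu}--\ref{uus_l}). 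Without this smoothing step the Hessian computation in your step (ii) cannot be carried out. The actual barrier in the paper is then the product $\varphi(h(x))\,g(\rho(x))$ with $g(t)=(\log t)^\alpha$, not a difference $\theta^2-Ch(\rho)$; the algebra of Proposition~\ref{1stprop} exploits this product structure on the level set in a specific way.

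Second, and more fundamentally, alternative~(2) is \emph{not} proved by a single barrier function at all. Under \eqref{pinch} the lower curvature bound $-b^2$ may blow up super-exponentially, and the Hessian estimate $\|D^2 h\|\lesssim b(\rho)/f_a(\lambda\rho)$ from the smoothing construction then overwhelms the good radial term, so the sublevel-set barrier of case (1) fails. Instead the paper (following Anderson and Borb\'ely) runs an iterative construction: one starts from $B(o,r_0)$, and repeatedly passes to intersections of strictly convex perturbations $C_{R,p}=\{\rho-\varepsilon_R\,\phi(\rho_p)\le R\}$ (Lemma~\ref{betaL}), where $\varepsilon_R$ is chosen so that only the curvature lower bound over a fixed-size shell $B(p,1)$ enters at each step. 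The final convex set is $C=\bigcup_n C_n$, and the pinching condition \eqref{pinch} is exactly what guarantees that the cumulative angular spread $\sum_n\theta_n$ stays below $\alpha$, so that the asymptotic boundary of $C$ misses the prescribed neighbourhood of $x_0$. Your proposal treats case (2) as ``choose a different profile $h$ in the same barrier scheme,'' which is not how \eqref{pinch} is used and, as far as I can see, would not work because there is no integrability obstruction to $b$ growing arbitrarily fast between consecutive radii $r_n$. The summable-angle iteration is an essential new idea for case~(2), not a variant of step~(iii).
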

We observe that Theorem \ref{thmSCex1} is obtained from Theorem \ref{thmSC} by using Proposition \ref{jacest}. Also Theorem \ref{thmSCex2} follows from Theorem \ref{thmSC} by taking $f_a(t)=\sinh^{\circ 2}(t)=\sinh (\sinh t)$; a direct computation shows that $-a^2(t)=-\sinh t \coth (\sinh t)-\cosh^2 (t)$.
It is worth noting that Theorem~\ref{thmSCex2} is just an example to illustrate the use of Theorem~\ref{thmSC}. Choosing $f_a=\sinh^{\circ m}$ to be the 
$m$-th iterate of $\sinh$, we can make the ratio $b^2(t)/a^2(t)$ of the curvature bounds grow faster than $\exp^{\circ k}$ for any given $k\in\N$ by choosing $m$ large enough. 
We point out that the pinching conditions \eqref{pinch1} and \eqref{pinch} are (almost) the same in their shared range. Let us also observe that the curvature lower bound and the pinching condition appearing in  Theorem \ref{thmSCex1} are the same as the ones obtained by the authors in \cite{CHR1} to solve the asymptotic Dirichlet problem for a large class of operators. 
The upper bound for the sectional curvature in Theorem \ref{thmSCex1}  is interesting since it is close to optimal for the solvability of the asymptotic Dirichlet problem: Indeed, if we assume 
that 
\[
K(x)\geq-\frac{1}{\rho(x)^{2}\log \rho(x)}
\] 
then all bounded harmonic functions are constant in dimension 2 and, more generally in any dimensions $n$, all bounded $p-$harmonic functions, with 
$p\ge n$, are constant; see the discussion in \cite{CHR1}. This may indicate that the upper bound for the sectional curvature in Theorem \ref{thmSCex1} 
could be optimal for the SC condition. 

The proofs of the cases (1) and (2) in Theorem \ref{thmSC} are quite different depending on the case we consider. Under the assumption (1), our proof relies heavily on the construction of smooth extensions of certain ``angular functions". In the case (2), we adapt the approach of Anderson. Let us notice that in the second case, due to the fact that our construction relies on an iterative procedure, it seems unavoidable that the condition \ref{pinch} involves terms depending on different radii. However, it is possible to obtain a more technical condition where the radii tend to each others. This is the object of Remarks \ref{rmk1} and \ref{rmk2}. 
 
 A natural question is to investigate a relation between the SC condition and Choi's convex conic neighorhood condition. In the forthcoming paper \cite{CHRv}, we prove that the SC condition and the convex conic neighborhood condition are equivalent provided that the sectional curvature of $M$ satisfies 
 \[
 K_M(x) \le -\dfrac{1+\varepsilon}{ \rho(x)^2(\log \rho(x))}
 \] 
 for some $\varepsilon>0$ and $\rho(x)$ large enough. We conjecture that these conditions are actually equivalent.

In the same paper, we will also investigate the optimality of the pinching conditions \eqref{pinch1} and \eqref{pinch} obtained in Theorem \ref{thmSC}. It 
is well-known that some kind of pinching conditions for the curvature are necessary for the SC condition to hold.  Independently, Ancona \cite{ancrevista} 
and Borb\'ely \cite{Bor} constructed a manifold $M$ of dimension $3$ with sectional curvatures bounded from above by $-1$ satisfying the following 
property: there exists a point $P\in\partial_\infty M$ such that the convex hull of every neighborhood $U\subset \overline{M}$ of $P$ contains the whole 
manifold $M$. Holopainen in \cite{H_ns} (see also \cite{ATU}) was able to give an estimate for the lower bound of sectional curvatures in Borb\'ely's 
example by proving that it decays to $-\infty$ faster than $-\exp (\tfrac{1}{2}\exp (2\rho(x)))$, for $\rho(x)$ large enough. Let us remark that assuming $K_M\leq -k^2$, for some 
constant $k$, we are only able to prove that the SC condition holds if the lower curvature bound tends to $-\infty$ exponentially.

Next, we give some applications of our convexity results for the asymptotic Dirichlet and Plateau problems. First let us recall the result established 
by Ripoll and Telichevesky in \cite{RT} concerning the asymptotic Dirichlet problem. There they considered a large class of elliptic differential 
operators $\cQ$ of the form
\begin{equation}\label{defopQ}
\cQ[u]=\diver\left(\frac{a(|\nabla u|)}{|\nabla u|}\nabla u\right)=0,
\end{equation}
where $a\in C^1([0,\infty))$ satisfies
\begin{itemize}
\item $a(0)=0$, $a^\prime (s)>0$ for all $s>0$;
\item $a(s)\leq C (s^{p-1}+1)$, for some constant $C$ and for all $s\in [0,\infty)$; 
\item there exist $q>0$ and $\delta>0$ such that $a(s)\geq s^q$ for all $s\in [0,\delta]$.
\end{itemize}
One notices that the minimal graph operator and the $p$-Laplacian belong to this class of operators. The main result of \cite{RT} is the following:
\begin{thm}
\label{RT4}
Let $M$ be a Cartan-Hadamard manifold with sectional curvature $K_M\leq -k^2$, $k>0$, satisfying the \emph{SC} condition. Assume, moreover, that
\begin{enumerate}
\item there exists a sequence of bounded $C^\infty$ domains $\Omega_k\subset \Omega$, $k\in \N$, satisfying $\Omega_k \subset \Omega_{k+1}$ and 
\[
\bigcup_{k\in \N} \Omega_k=M,
\] 
such that the Dirichlet problem for the operator $\cQ$ (as defined above) is solvable in $\Omega_k$ for all $C^\infty$ boundary data;
\item sequences of solutions with uniformly bounded $C^0$ norm are compact in relatively  compact subsets of $M$.
\end{enumerate}
Then the asymptotic Dirichlet problem for $\cQ$ is solvable for any continuous boundary data.
\end{thm}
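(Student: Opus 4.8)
The plan is to run a Perron-type exhaustion argument to produce an interior solution of $\cQ[u]=0$ and then to force the prescribed asymptotic values by means of barriers built from the \emph{SC} condition together with the curvature bound $K_M\le-k^2$. First I would extend the given $h\in C(\partial_\infty M)$ to some $\bar h\in C(\overline M)$ (using the cone topology) and put $m:=\|\bar h\|_\infty$. For each $k$ I would approximate $\bar h|\partial\Omega_k$ in $C^0(\partial\Omega_k)$ by smooth functions and invoke hypothesis (1) together with the comparison principle for $\cQ$ (available since $a'>0$) to obtain $u_k\in C(\overline{\Omega_k})$ with $\cQ[u_k]=0$ in $\Omega_k$, $u_k|\partial\Omega_k=\bar h|\partial\Omega_k$, and $\|u_k\|_\infty\le m$. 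Hypothesis (2) then yields a subsequence converging in $C_{\loc}(M)$ to a function $u$ with $\cQ[u]=0$ in $M$ and $\|u\|_\infty\le m$. What remains, and what is the real content, is to show that $u$ extends continuously to $\overline M$ with $u|\partial_\infty M=h$.

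To that end I would fix $x_0\in\partial_\infty M$ and $\varepsilon>0$, choose a relatively open $W\ni x_0$ with $\osc_W h<\varepsilon$ and, shrinking $W$, with $\bar h<h(x_0)+\varepsilon$ on the cone neighborhood of $\overline W$, and apply the \emph{SC} condition to $(x_0,W)$ to get a $C^2$ open set $\Omega\subset M$ with $x_0\in\ir\partial_\infty\Omega\subset W$ and $C:=M\setminus\Omega$ convex. Set $d(x)=\dist(x,C)$. I would use three facts: $d$ is convex on $M$ and $C^2$ on $\Omega$ (distance to a convex set in a Hadamard manifold); $d(x)\to\infty$ as $x\to y$ for every $y\in\ir\partial_\infty\Omega$ (since $y\notin\partial_\infty C$ and $C$ is convex); and, by Hessian comparison with $K_M\le-k^2$, $\Delta d\ge(n-1)k\tanh(kd)$ on $\Omega$, so $\Delta d$ is positive and bounded below away from $\partial C$.

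The barrier I would try is $w^+(x)=h(x_0)+\varepsilon+\phi(d(x))$ with $\phi\in C^2([0,\infty))$ positive, decreasing, convex, affine near $0$, $\phi(\infty)=0$, and $\phi$ large enough on a fixed slab $\{d\le d_1\}$ that $h(x_0)+\varepsilon+\phi\ge m$ there. Writing $\psi=-\phi'$ and using $|\nabla d|=1$ one computes $\cQ[w^+]=a'(\psi)\phi''-a(\psi)\Delta d$, so $w^+$ is a supersolution as soon as $a'(\psi)\phi''\le a(\psi)(n-1)k\tanh(kd)$: the affine part of $\phi$ kills $\phi''$ near $0$, where $\tanh(kd)$ degenerates, while for $d$ large one has $\psi\in[0,\delta]$, where $a(\psi)\ge\psi^q$, and $\tanh(kd)$ is bounded below, so an exponential-type tail for $\phi$ works, the growth bound $a(s)\le C(s^{p-1}+1)$ being used to control $a'(\psi)$ relative to $a(\psi)$. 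I would then compare $u_k$ with $w^+$ on $\Omega\cap\Omega_k$: on $\partial\Omega\cap\Omega_k$ one has $d=0$ and $w^+\ge m\ge u_k$; on $\Omega\cap\partial\Omega_k$ one has $u_k=\bar h\le h(x_0)+\varepsilon\le w^+$ where $d\ge d_1$ (such points lie in the cone neighborhood of $\overline W$ by the choice of $W$) and $\bar h\le m\le w^+$ where $d\le d_1$. The comparison principle gives $u_k\le w^+$ on $\Omega\cap\Omega_k$, hence $u\le w^+$ on $\Omega$, and since $\phi(d(x))\to0$ as $x\to x_0$ we get $\limsup_{x\to x_0}u(x)\le h(x_0)+\varepsilon$. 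The symmetric barrier $w^-=h(x_0)-\varepsilon-\phi(d)$ gives the matching lower bound, so $u(x)\to h(x_0)$; as $x_0$ is arbitrary, $u\in C(\overline M)$ with $u|\partial_\infty M=h$, and uniqueness is again the comparison principle.

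I expect the barrier step to be the main obstacle: one must exhibit a single profile $\phi$ that is simultaneously a $\cQ$-supersolution on all of $\Omega$ — which forces compatibility with the degeneration of $\Delta d$ along $\partial C$ — and large near $\partial C$ yet decaying to $0$ at infinity. This is exactly where the structural hypotheses on $a$ (the lower bound $a(s)\ge s^q$ for small $s$, the upper bound $a(s)\le C(s^{p-1}+1)$, and $a'>0$) together with $K_M\le-k^2$ are consumed; the exhaustion and compactness parts are comparatively soft.
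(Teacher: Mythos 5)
First, note that Theorem~\ref{RT4} is not proved in this paper: it is quoted verbatim from Ripoll--Telichevesky's article \cite{RT} and used as a black box, so there is no ``paper's own proof'' to compare your attempt against. I will therefore assess the proposal on its own merits and against the standard argument in the literature.

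Your overall architecture is the right one and is essentially what \cite{RT} does: solve $\cQ$ in the exhaustion $\Omega_k$ with bounded boundary data, pass to a locally uniform limit using the comparison principle and hypothesis (2), and then obtain boundary continuity at each $x_0\in\partial_\infty M$ from a pair of $\cQ$-barriers of the form $h(x_0)\pm\varepsilon\pm\phi\bigl(\dist(\cdot,M\setminus\Omega)\bigr)$, where $\Omega$ comes from the SC condition. The divergence computation $\cQ[w^+]=a'(\psi)\phi''(d)-a(\psi)\Delta d$ with $\psi=-\phi'$ is correct, as is the Hessian-comparison bound $\Delta d\ge(n-1)k\tanh(kd)$ on $\Omega$.

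The weak point is the specific ODE ansatz for $\phi$. You take $\phi$ affine near $\partial C$ and an ``exponential-type tail'', and claim the supersolution inequality follows because $a(s)\le C(s^{p-1}+1)$ ``controls $a'(\psi)$ relative to $a(\psi)$''. It does not: a pointwise upper bound on $a$ gives no pointwise control on $a'$, so $\psi a'(\psi)/a(\psi)$ can be unbounded as $\psi\to0$ under the stated hypotheses, and then $\phi(t)=Ae^{-\lambda t}$ need not satisfy $a'(\psi)\phi''\le a(\psi)\Delta d$ for any fixed $\lambda>0$. The clean way around this is to prescribe the flux rather than $\phi$ itself: set $a(-\phi'(t))=Be^{-\mu(t-d_1)}$ for $t\ge d_1$, i.e.\ $\phi'(t)=-a^{-1}(Be^{-\mu(t-d_1)})$ and $\phi(t)=\int_t^\infty a^{-1}(Be^{-\mu(s-d_1)})\,ds$. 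Then $\cQ[w^+]=\diver\bigl(-a(\psi)\nabla d\bigr)=a(\psi)\bigl(\mu-\Delta d\bigr)$, so the supersolution condition on $\{d\ge d_1\}$ becomes simply $\mu\le\Delta d$, which $K_M\le-k^2$ guarantees with $\mu=(n-1)k\tanh(kd_1)$; on $\{d\le d_1\}$ one extends $\phi$ affinely as you do, so $\phi''=0$ and $\cQ[w^+]=-a(\psi)\Delta d\le 0$ trivially. The structural hypothesis $a(s)\ge s^q$ on $[0,\delta]$ is used precisely to ensure $\int_0\!a^{-1}(y)y^{-1}\,dy<\infty$, hence $\phi(\infty)=0$, and one takes $\mu$ small and/or $B$ suitably to make $\phi(d_1)\ge 2\sup|\bar h|$. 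Your own ansatz is the special case $a(s)=s$ and recovers the right answer for the $p$-Laplacian and minimal graph operator, but it is not robust for the whole class of $a$ in the statement.

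Two smaller points you should tighten. First, your parenthetical ``such points lie in the cone neighborhood of $\overline W$'' requires an argument: you need that, for $k$ large, $\{x\in\Omega\cap\partial\Omega_k: d(x)\ge d_1\}$ lies in a prescribed truncated cone around $x_0$ where $\bar h<h(x_0)+\varepsilon$. This uses the convexity of $C=M\setminus\Omega$ together with $K_M\le-k^2$: if $x\to\xi$ with $d(x,C)\ge d_1>0$, then $\xi\notin\partial_\infty C$ (otherwise the geodesic ray in $C$ toward $\xi$, which exists by convexity, would stay within bounded distance of $x$ by the negative-curvature shadowing of asymptotic rays, forcing $d(x,C)$ bounded), hence $\xi\in\ir\partial_\infty\Omega\subset W$. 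Second, the fact that $d$ is $C^2$ on $\Omega$ (and hence that $\cQ[w^+]$ is classically defined there) uses that $C$ has $C^2$ boundary, which the SC condition provides; it is worth stating. With these repairs, your outline is a faithful reconstruction of the argument Theorem~\ref{RT4} refers to.
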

As an immediate consequence of Theorem \ref{thmSC} and Theorem \ref{RT4} we obtain new solvability results for the asymptotic Dirichlet problem.  In 
particular, it follows that the asymptotic Dirichlet problem for the class of operators $\cQ$ described above is solvable for any continuous  
boundary data at infinity in  Hadamard manifolds whose sectional curvatures satisfy the condition of Theorem \ref{thmSCex2}. 
We notice that the solvability of the asymptotic Dirichlet problem under  superexponential decay of the curvature was not known earlier. 
We point out that, more generally, from the combination of the 
above mentioned theorems we are able to consider non rotationally symmetric Hadamard manifolds having curvature tending to $-\infty$ at any speed.
A negative curvature upper bound alone is not sufficient for the solvability of the asymptotic Dirichlet problem as Ancona's and Borb\'ely's examples 
(\cite{ancrevista}, \cite{Bor}) show in the case of the Laplace equation. In \cite{H_ns}, Holopainen generalized Borb\'ely's result to the $p$-Laplace 
equation, and very recently, Holopainen and Ripoll \cite{HR_ns} extended these
nonsolvability results to the operator $\cQ$ (as defined in \eqref{defopQ}), in particular, to the
minimal graph equation.

Finally, we present applications of the SC condition to the solvability of the asymptotic Plateau problem. First we introduce some basic 
terminology and notation. We denote by $\cR_{k}(U)$ and $\cR_{k}^2(U)$ the spaces of rectifiable $k$-currents on an open set 
$U\subset M$ with $\Z$- and $\Z_2$-multiplicity, respectively. 
The mass, support, and the boundary of $S\in \cR_{k}(U)\cup \cR_k^2(U)$ are denoted by $\mass(S),\ \spt S$, and $\partial S$, respectively.
For a Borel set $V\subset U$ and $S\in \cR_k(U)\cup \cR_k^2(U)$, the restriction of $S$ to $V$ is denoted by 
$S\llcorner V$ and defined as $S\llcorner V(\omega)=S(\chi_V \omega)$. The local classes $\cR_{k,\loc}(U)$ and $\cR_{k,\loc}^2(U)$ 
are defined in an obvious way.
Every $k$-rectifiable oriented set $S\subset M$, with $\cH^k(S\cap K)<\infty$ for all compacta $K\subset M$, defines a locally rectifiable 
$k$-current $[S]$ as
\[
[S](\omega)=\int_{S}\omega,
\]
where $\omega$ is a differential $k$-form.
We refer to \cite{federer}, \cite{morgan}, and \cite{simon} for standard references, see also \cite{BL} for a concise 
presentation. Before stating our results, we briefly review some previous results on this topic.
Anderson proved in \cite{AndInv} (see also \cite{AndCMH}) that the asymptotic Plateau problem is solvable in the hyperbolic space 
$\mathbb{H}^{n}$ for smoothly embedded closed submanifolds $\Gamma\subset\partial_{\infty}\mathbb{H}^{n}$. Bangert and Lang \cite{BL} extended Anderson's result to a fairly large class of manifolds.  
To state  their result, which we want to refer, let $(M,g)$ be an $n$-dimensional Cartan-Hadamard manifold with sectional curvature satisfying 
$-b^2\leq K_M\leq -1$, for some  $b\geq 1,$ and let $\tilde{g}$ be a Riemannian metric on $M$ Lipschitz equivalent to $g$, i.e. there exist two constants 
$\beta\geq \alpha>0$ such that $\alpha^2 g(v,v)\leq \tilde{g}(v,v)\leq \beta^2 g(v,v)$ for all $v\in TM.$ They proved the existence of a complete 
$\tilde{g}$-minimizing locally rectifiable $k $-current modulo two in $M $ that is asymptotic to a given compact $(k-1)$-dimensional embedded 
submanifold of $ \partial_\infty M$, with $k\in \{2,\ldots , n-1\}$. It is worth noting that on such a manifold $(M,\tilde g)$ the sectional curvature can 
easily go to  $-\infty$ or to $0$; in fact, some sectional curvatures can take arbitrary large positive values. 
In \cite{Lang} Lang extended the results of \cite {BL} by replacing the curvature lower and upper bounds of $(M,g) $ by 
weaker bi-Lipschitz invariant conditions: bounded geometry and Gromov hyperbolicity, respectively. Very recently, Ripoll and Tomi \cite{RTo} investigated 
the asymptotic Plateau problem for minimal type disks using the classical Plateau theory and
proved the existence of solutions on certain Hadamard manifolds whose curvature can go
to $-\infty$ under a prescribed bound decay of the sectional curvature.  Finally, we refer to \cite{cosku} for a survey on the asymptotic Plateau problem
and to \cite{cosku2} and \cite{KloMaz} for recent studies on the asymptotic Plateau problem in $\Bbb H^2\times\R$.


Our results on the asymptotic Plateau problem are the following:
\begin{thm}
\label{TSMC} 
Let $M^{n},$ $n\geq3,$ be a Cartan-Hadamard manifold satisfying the SC condition and let $\Gamma\subset \partial_{\infty} M^n$ be a (topologically) 
embedded closed $(k-1)$-dimensional submanifold, with $2\leq k\leq n-1$. Then there exists a complete, absolutely area minimizing, locally 
rectifiable $k$-current $\Sigma$ modulo 2 in $M^n$ asymptotic to $\Gamma$ at infinity, i.e. $\partial_{\infty}\spt\Sigma=\Gamma$. 
\end{thm}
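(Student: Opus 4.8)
The plan is to adapt the exhaustion method of Anderson \cite{AndInv} and Bangert--Lang \cite{BL}, using the SC condition in place of their curvature pinching to confine the minimizing current near $\Gamma$; all (co)homology below is taken mod $2$. Fix $o\in M$. Since $M$ is Cartan--Hadamard, radial projection from $o$ identifies each geodesic sphere $S_t=\partial B(o,t)$ with $\partial_{\infty}M$ in the cone topology and carries $\Gamma$ to a topologically embedded $(k-1)$-submanifold $\Gamma_t\subset S_t$. A routine approximation (replacing $\Gamma$ by nearby submanifolds and passing to a further limit at the end) reduces us to the case where $\Gamma$ is smooth, so that each $\Gamma_t$ is rectifiable and the cone $\cone(\Gamma)$ over $\Gamma$ from $o$ has locally finite mass. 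For a sequence $R_i\to\infty$ we solve the Plateau problem mod $2$ in $\overline{B(o,R_i)}$ with prescribed boundary $\Gamma_{R_i}$: the compactness and lower semicontinuity theorems for currents mod $2$ \cite{federer,simon,morgan}, together with the isoperimetric inequality in $\overline{B(o,R_i)}$, produce a mass minimizer $\Sigma_i\in\cR_k^2$ with $\partial\Sigma_i=\Gamma_{R_i}$ and $\mass(\Sigma_i)\le\mass\bigl(\cone(\Gamma)\llcorner\overline{B(o,R_i)}\bigr)$.

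The SC condition controls the asymptotic behaviour of the $\Sigma_i$. Given $\xi\in\partial_{\infty}M\setminus\Gamma$, choose a relatively open $W\ni\xi$ with $\overline W\cap\Gamma=\emptyset$ and apply the SC condition to get a $C^2$ open $\Omega\subset M$ with $\xi\in\ir\partial_{\infty}\Omega\subset W$ and $K:=M\setminus\Omega$ convex. Then $\Gamma$ is disjoint from $\partial_{\infty}\Omega\subset\overline W$, and since $\Gamma_{R_i}\to\Gamma$ in $\overline M$ we get $\Gamma_{R_i}\subset K$ for all large $i$. As $K_i:=K\cap\overline{B(o,R_i)}$ is convex, the nearest-point projection $M\to K_i$ is $1$-Lipschitz, fixes $\Gamma_{R_i}\subset K_i$, and does not increase mass, so by minimality $\spt\Sigma_i\subset K_i\subset K$. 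Covering the compact set $\partial_{\infty}M\setminus N$, for a prescribed neighbourhood $N$ of $\Gamma$, by finitely many such sets $\ir\partial_{\infty}\Omega$ and intersecting the corresponding $K$'s, we confine all $\Sigma_i$ with $i$ large to a convex set whose asymptotic boundary lies in $N$; letting $N\downarrow\Gamma$ and diagonalizing shows that any subsequential limit $\Sigma$ satisfies $\partial_{\infty}\spt\Sigma\subset\Gamma$.

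To extract the limit and establish the reverse inclusion, comparison with the cone and the isoperimetric inequality on geodesic spheres give uniform local mass bounds $\sup_i\mass(\Sigma_i\llcorner B(o,\rho))<\infty$ (as in \cite{AndInv,BL}); since $\partial\Sigma_i$ is supported on $S_{R_i}$, the compactness theorem yields a subsequence $\Sigma_i\rightharpoonup\Sigma\in\cR_{k,\loc}^2(M)$ with $\partial\Sigma=0$, and since the $\Sigma_i$ are eventually area-minimizing cycles on every relatively compact open set, $\Sigma$ is locally mass minimizing mod $2$ and the $\Sigma_i$ converge to it together with their supports. For $\zeta\in\Gamma$, a mod-$2$ linking argument forces $\spt\Sigma_i$ to stay within a bounded distance of the ray $\gamma_{\zeta}$ from $o$ to $\zeta$ at every radius: choosing a small disk $D^{\ast}\subset\partial_{\infty}M$ transverse to $\Gamma$ at $\zeta$, the radial slice $\langle\Sigma_i,\rho,t\rangle$ is, for a.e.\ $t<R_i$, homologous in $S_t$ to $\langle\cone(\Gamma),\rho,t\rangle=\Gamma_t$ (both fill $\Gamma_{R_i}$ rel boundary in the contractible ball $\overline{B(o,R_i)}$) and hence, like $\Gamma_t$, must meet the radial projection of $D^{\ast}$ to $S_t$, a small disk centred near $\gamma_{\zeta}(t)$. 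Taking $t$ fixed, these points of $\spt\Sigma_i$ remain in a ball about $o$ of radius independent of $i$, so by convergence of supports their limit lies in $\spt\Sigma$; letting $t$ vary produces points of $\spt\Sigma$ converging in the cone topology to $\zeta$. Thus $\Gamma\subset\partial_{\infty}\spt\Sigma$, hence $\partial_{\infty}\spt\Sigma=\Gamma$ and $\Sigma\neq0$, giving the required complete, absolutely area minimizing current.

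The main obstacle is this reverse inclusion. In \cite{BL} both inclusions follow from trapping the minimizer in a tubular neighbourhood of the cone built from two-sided curvature bounds, whereas the SC condition supplies only the one-sided convex-complement barriers used above, which do not by themselves prevent $\spt\Sigma$ from collapsing away from part of $\Gamma$. The inclusion $\Gamma\subset\partial_{\infty}\spt\Sigma$ must therefore be obtained from the linking argument, and the delicate work is to make the mod-$2$ intersection statement for the sliced currents fully rigorous (transversality, the coarea and slicing estimates, convergence of supports for minimizers) and to push the preliminary approximation through for a merely topologically embedded $\Gamma$; by contrast, the step where the SC hypothesis is used is comparatively soft.
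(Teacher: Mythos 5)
Your overall strategy is the same as the paper's (exhaust $M$ by balls, solve mod~$2$ Plateau problems, confine the minimizers by SC-convex barriers, extract a limit, and use a linking argument for the reverse inclusion), but the specific linking argument you give has a genuine gap, and two secondary steps are imprecise.

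The gap is in the step establishing $\Gamma\subset\partial_{\infty}\spt\Sigma$. You claim that because $\langle\Sigma_i,\rho,t\rangle$ and $\Gamma_t$ are both slices of fillings of $\Gamma_{R_i}$ in the contractible ball, they are ``homologous in $S_t$'', and ``hence, like $\Gamma_t$, must meet the radial projection of $D^*$''. But the first statement carries no information: $S_t$ is an $(n-1)$-sphere and $H_{k-1}(S^{n-1};\Z_2)=0$ for $2\le k\le n-1$, so \emph{every} pair of $(k-1)$-cycles in $S_t$ is homologous there. In particular one can perturb $\Gamma_t$ slightly to a cycle homologous in $S_t$ that avoids the disk entirely. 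A correct version of this idea must use a genuine linking invariant rather than the absolute homology class in the sphere: take a small $(n-k-1)$-sphere $\beta\subset\partial_{\infty}M\setminus\Gamma$ with nontrivial $\Z_2$ linking with $\Gamma$, cone it from $o$ to get an $(n-k)$-chain $C$ with $\partial C$ on $S_{R_i}$, and compute the mod~$2$ intersection number $\Sigma_i\cdot C=\operatorname{lk}(\Gamma_{R_i},\beta_{R_i})=1$, forcing $\spt\Sigma_i\cap C\neq\emptyset$. Combined with $\spt\Sigma_i\subset\Lambda$ (your barrier), and the fact that $\Lambda\cap C$ is compact (since $\partial_\infty\Lambda\cap\partial_\infty C=\Gamma\cap\beta=\emptyset$), this gives points of $\spt\Sigma_i$ in a fixed compact set; the lower density bound (Lemma~\ref{ldb}) then carries this to the limit. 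This is exactly how the paper argues, through the abstract non-bounding property of the approximating cycles $S_i$ (following Bangert--Lang) rather than a sphere-by-sphere homology computation.

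Two further points. First, ``comparison with the cone and the isoperimetric inequality on geodesic spheres give uniform local mass bounds'' is not quite right: the cone comparison only bounds $\mass(\Sigma_i)$ by a quantity that grows without bound as $i\to\infty$ and gives no localization. The uniform local bound $\sup_i\mass(\Sigma_i\llcorner B(o,\rho))<\infty$ comes from the Harnack-type upper density estimate (Proposition~\ref{C} in the paper), which is a consequence of the \emph{sublinear} isoperimetric inequality for mod~$2$ boundaries in balls --- a tool that is intrinsically $\Z_2$ and cannot exist for integer currents. Second, reducing to smooth $\Gamma$ ``and passing to a further limit at the end'' is not routine: the limit minimizer depends on the approximating smooth submanifolds in a way that is not a priori continuous, so showing the final asymptotic boundary equals the original topological $\Gamma$ requires another layer of argument. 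The paper sidesteps this by approximating $\Gamma$ directly with $\Z_2$ Lipschitz cycles $\sigma_i$ equipped with a non-bounding condition that is stable under the limit, without ever constructing minimizers with smooth prescribed asymptotic boundary.
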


In the case $k=n-1$ we can treat more general limit sets $\Gamma\subset\partial_{\infty}M^n$ and, moreover, work with $\Z$-multiplicity currents. 
Following \cite{BL} we denote by $\bd,\ \cl$, and $\ir$ the boundary, closure, and interior with respect to the sphere topology of 
$\partial_{\infty}M^n$. Our result is a counterpart of \cite[4.4]{BL}; see also \cite[3.2]{Lang1} and \cite[5.4]{Lang}.
\begin{thm}
\label{TSMC2} Let $M^{n},$ $n\geq 2,$ be a Cartan-Hadamard manifold satisfying the SC condition.
Suppose that $\Gamma\subset\partial_{\infty}M^n$ satisfies $\Gamma=\bd A$ for some $A\subset\partial_{\infty}M^n$ with $A=\cl(\ir A)$.
Then there exists a closed set $W\subset M^n$ of locally finite perimeter in $M^n$ such that $\Sigma:=\partial[W]\in\cR_{n-1,\loc}(M^n)$ is 
minimizing in $M^n,\ \partial_{\infty}W=A$, and $\partial_{\infty}\spt\Sigma =\Gamma$.
\end{thm}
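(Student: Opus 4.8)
The plan is to reduce Theorem~\ref{TSMC2} to the codimension-one minimizing theory for sets of locally finite perimeter, exactly in the spirit of \cite[4.4]{BL}, using the SC condition to control the behaviour at infinity. First I would exhaust $M^n$ by an increasing sequence of geodesic balls $B_i=B(o,R_i)$ with $R_i\to\infty$, and consider the obstacle/Plateau problem for perimeter in each $B_i$: among all sets $V\subset M^n$ of locally finite perimeter that agree with a fixed ``boundary datum'' outside $B_i$ and whose trace on $\partial_\infty M^n$ is prescribed by $A$, minimize the perimeter relative to $B_i$. Concretely one fixes a closed set $W_0\subset M^n$ with $\partial_\infty W_0=A$ (built by coning off $A$ from $o$, say $W_0=\{tv:t\ge 0,\ v\in A\}$ in normal coordinates, or a smoothing thereof), and lets $W_i$ minimize perimeter in $B_i$ among competitors equal to $W_0$ outside $B_i$. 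Standard compactness and lower semicontinuity for sets of finite perimeter give a minimizer $W_i$, and interior regularity theory (Federer; the results summarized in \cite{simon}, \cite{morgan}) shows $\partial[W_i]$ is a smooth embedded minimal hypersurface away from a singular set of Hausdorff dimension $\le n-8$.

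The next step is to pass to the limit $i\to\infty$. By a diagonal argument one extracts a subsequence so that the $W_i$ converge locally (in $L^1_{\mathrm{loc}}$, equivalently the currents $\partial[W_i]$ converge in the local flat topology) to a closed set $W$ of locally finite perimeter, with $\Sigma:=\partial[W]$ locally mass-minimizing in all of $M^n$; minimality is preserved under such limits because any competitor supported in a fixed ball is an admissible competitor for all large $i$. So far this does not use the SC condition at all. The SC condition enters precisely to prove the two asymptotic statements $\partial_\infty W=A$ and $\partial_\infty\spt\Sigma=\Gamma$.

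For the inclusion $\partial_\infty\spt\Sigma\subset\Gamma$ (equivalently, that $\spt\Sigma$ does not ``leak'' into the interior of $A$ or of its complement at infinity), I would use the SC condition as a barrier mechanism: given a point $x\in\partial_\infty M^n\setminus\Gamma$, choose a relatively open $W\subset\partial_\infty M^n$ with $x\in W$ and $\cl W\cap\Gamma=\es$, and apply the SC condition to get a $C^2$ domain $\Omega$ with $x\in\ir\partial_\infty\Omega\subset W$ and $M^n\setminus\Omega$ convex. Convexity of $M^n\setminus\Omega$ means $\partial\Omega$ is mean-convex as seen from inside $\Omega$, so it acts as a barrier: a minimizing hypersurface cannot have an interior tangency with $\partial\Omega$ from the $M^n\setminus\Omega$ side, and since near infinity the competitors $W_i$ coincide with $W_0$ (whose trace avoids a neighbourhood of $x$ appropriately), the usual maximum-principle comparison with the translates/sweepout of $\partial\Omega$ forces $\spt\Sigma\cap\Omega'=\es$ for a slightly smaller conic neighbourhood $\Omega'$ of $x$. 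Doing this for every $x\notin\Gamma$ on the appropriate side gives $\partial_\infty\spt\Sigma\subset\Gamma$ and, combined with $A=\cl(\ir A)$, also $\partial_\infty W\subset A$. The reverse inclusions — that $\Sigma$ actually reaches all of $\Gamma$, i.e. $W$ is not pushed off some piece of $A$ at infinity — follow by comparing perimeters: if $W$ missed part of $\ir A$ at infinity one could lower the perimeter in some large ball by filling in, contradicting minimality, again using the SC barriers on the complementary side to localize the argument. The technical heart, and the step I expect to be the main obstacle, is making this barrier argument uniform as $R_i\to\infty$: one must show the conic neighbourhood $\Omega'$ from which $\spt\Sigma$ is excluded can be chosen independently of $i$, which requires the convex set $M^n\setminus\Omega$ from the SC condition to genuinely ``trap'' all the minimizers $\partial[W_i]$ for $i$ large — this is where the precise statement of the SC condition (the $C^2$ regularity of $\partial\Omega$ and that $\ir\partial_\infty\Omega$ is a genuine open neighbourhood of $x$ inside $W$) does the work, via a sliding-barrier / calibration argument together with the interior regularity of $\Sigma$ to rule out contact at singular points. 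Once $\partial_\infty W=A$ and $\partial_\infty\spt\Sigma=\Gamma$ are established, $\Sigma=\partial[W]\in\cR_{n-1,\loc}(M^n)$ being minimizing in $M^n$ is exactly the assertion of the theorem.
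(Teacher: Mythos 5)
Your overall strategy coincides with the paper's: exhaust $M^n$ by geodesic balls, solve a perimeter-minimization problem in each $\bar B(o,i)$ with boundary behaviour prescribed by the cone over $A$, pass to a locally minimizing limit $\Sigma=\partial[W]$ by compactness, and then invoke the SC condition to pin down the asymptotic boundary. That much is the same. Where you and the paper diverge is in how the SC condition is actually deployed — and this is precisely the step you yourself flag as ``the main obstacle.''

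The paper does not run a sliding-barrier/maximum-principle comparison against individual hypersurfaces $\partial\Omega$, and so it never has to worry about interior tangencies, singular points of $\Sigma$, or making the barrier uniform as $i\to\infty$. Instead it uses Lemma~\ref{funlem} once and for all to build a single closed set $\Lambda\subset M^n$ with $\Int\Lambda$ convex and $\partial_\infty\Lambda=\Gamma$, as the intersection of the convex complements $M^n\setminus\Omega_x$ over all $x\in\partial_\infty M^n\setminus\Gamma$. The key observation is then soft and purely current-theoretic (Remark~\ref{rmkbar}): $S_i:=\partial[W_i]-T_i$ has $\spt\partial S_i\subset\cone_o\Gamma\subset\Lambda$ and is minimizing in the convex set $\bar B(o,i)\cap\Lambda$, hence $\spt S_i\subset\Lambda$ with no regularity or maximum principle needed. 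Passing to the limit gives $\spt\Sigma\subset\Lambda$, hence $\partial_\infty\spt\Sigma\subset\Gamma$, in one line. Your comparison-with-$\partial\Omega$ argument is essentially a pointwise unpacking of this, and it is strictly harder to make rigorous: minimizing currents mod $2$ or with $\Z$-multiplicity need not be smooth where they would touch $\partial\Omega$, and the ``calibration / sliding'' you invoke at tangency points, together with the claim that the excluded cone $\Omega'$ can be taken independent of $i$, is exactly what the convexity of $\Lambda$ makes automatic. If you want to keep your route you would, at a minimum, have to justify that the barrier argument applies to $\spt\Sigma$ at singular points (for $n\ge 9$) and that the exclusion is uniform in $i$ — both of which disappear once you use $\Lambda$.

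Two smaller points. First, you gloss over why the sequence $\{W_i\}$ has uniformly bounded perimeter on fixed balls; the paper justifies this by comparing the minimizer with the slice current $T_i=[S(o,i)\cap\cone_o(\ir A)]$ and using that the metric on $\bar B(o,r)$ is bi-Lipschitz to the hyperbolic one (citing \cite[p.~32]{Lang1}). This bound is what makes your ``$L^1_{\rm loc}$ compactness plus diagonal argument'' actually produce a nontrivial limit, and should be stated. Second, the inclusions $\Gamma\subset\partial_\infty\spt\Sigma$ and $\partial_\infty W=A$ are also not a one-line ``fill in to lower perimeter'' argument; the paper defers to \cite[p.~55]{Lang1}, where one again exploits that $\spt\Sigma\subset\Lambda$ to localize. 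So: right skeleton, but you should replace the ad hoc barrier comparison by the clean ``convex set $\Lambda$ traps minimizing currents whose boundary lies in $\Lambda$'' mechanism, which is the real content the SC condition supplies here.
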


The two theorems above are in line with those in \cite{BL}, \cite{Lang1}, and \cite{Lang}. However, there are differences.
Firstly, all the manifolds in those papers have bounded geometry (see \cite[p. 33]{Lang} for the definition) whereas this is not the case 
in our setting since we can allow the sectional curvature upper bound $-a^2(\rho(x))$ decay to $-\infty$. 
Secondly, on manifolds considered in \cite{BL}, \cite{Lang1}, and \cite{Lang}, some sectional curvatures can be $0$ (or even positive) 
outside any compact set, and therefore such manifolds are not covered by our results.

The next result is a generalization of Anderson's theorem (\cite[Theorem 3]{AndInv}) to rotationally symmetric Cartan-Hadamard manifolds satisfying the 
SC condition.

\begin{thm}
\label{TSMC3}
Let $M^{n},$ $n\geq3,$ be a rotationally symmetric Cartan-Hadamard manifold around $o\in M^n$ satisfying the SC condition. 
Identify $\partial_{\infty}M^n$ with the unit $(n-1)$-sphere $S_o M^n\subset T_o M^n$. Let $\Gamma\subset S_o M^n$ be a closed, smoothly embedded, orientable $(k-1)$-dimensional submanifold, with $2\leq k\leq n-1$. 
Then there exists a complete, absolutely area minimizing, locally rectifiable $k$-current $\Sigma$ in $M^n$ asymptotic 
to $\Gamma$ at infinity, i.e. $\partial_{\infty}\spt\Sigma=\Gamma$. 
\end{thm}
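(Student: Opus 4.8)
The plan is to reduce Theorem~\ref{TSMC3} to Theorem~\ref{TSMC} together with the extra structure available on a rotationally symmetric manifold. The point of the $\Z$-coefficient (as opposed to $\Z_2$) statement is that we must produce an \emph{orientable} minimizing current, and for this we need to know that the ambient manifold and, crucially, a suitable exhaustion by convex-hull-type regions, are orientable, and that the filling we construct is two-sided. First I would fix the rotationally symmetric metric $g = dr^2 + f(r)^2 g_{S^{n-1}}$ with $f = f_a$ as in \eqref{Jacobi_eq}, identify $\partial_\infty M^n$ with $S_o M^n$ via the geodesic rays from $o$, and let $\Gamma \subset S_o M^n$ be the given closed, smoothly embedded, orientable $(k-1)$-submanifold. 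Since $\Gamma$ is orientable and smoothly embedded, it bounds in the sense of $\Z$-chains: there is a smooth compact oriented $k$-submanifold-with-boundary $A \subset S_o M^n$ (or at least an integral $k$-current $T_0$ with $\partial T_0 = \llbracket \Gamma \rrbracket$), using that $H_{k-1}(S^{n-1};\Z)$ vanishes in the relevant range $2 \le k \le n-1$ so $[\Gamma]$ is a boundary; this is where smoothness and orientability of $\Gamma$ enter.

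Next I would set up the exhaustion and the compactness/diagonal argument exactly as in the proof of Theorem~\ref{TSMC}, but carrying $\Z$-coefficients throughout. Concretely: for each large $R$, let $\bar B(o,R)$ be the closed geodesic ball, push the oriented filling $T_0$ of $\Gamma$ radially inward to get an oriented integral $k$-current $\Gamma_R$ inside $M^n$ with $\partial_\infty$-behaviour converging to $\Gamma$ (here rotational symmetry makes the radial pushforward completely explicit and keeps masses controlled by $f_a$). Solve the Plateau problem for $\Gamma_R$ in $\bar B(o,R)$ over $\Z$ to obtain an area-minimizing integral current $\Sigma_R$ with $\partial \Sigma_R = \partial \Gamma_R$; these exist by the Federer--Fleming compactness theorem. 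The SC condition, via the convex sets $M \setminus \Omega$ it furnishes, gives the essential barrier estimate: near any $x \in \partial_\infty M^n \setminus \Gamma$, a convex $C^2$ region $M \setminus \Omega$ disjoint from a neighborhood of $\Gamma$ confines $\spt \Sigma_R$ away from $x$ (minimizing currents cannot touch the boundary of a convex set from inside without being pushed in, decreasing mass), uniformly in $R$. This is the step where the convexity-at-infinity hypothesis does all the work, and it is the same mechanism as in Theorem~\ref{TSMC}; the only novelty is checking it is compatible with $\Z$-coefficients, which it is since the cut-and-paste comparison arguments (replacing $\Sigma_R$ by its projection onto the convex set) respect orientations.

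Then I would extract, by the local compactness theorem for integral currents and a diagonal argument over an exhaustion of $M^n$, a subsequential limit $\Sigma \in \cR_{k,\loc}(M^n)$ which is locally area-minimizing in $M^n$; by lower semicontinuity of mass and the barrier estimates, $\Sigma$ is complete and $\partial_\infty \spt \Sigma = \Gamma$ (the inclusion $\supset$ from the explicit radial fillings approaching $\Gamma$, the inclusion $\subset$ from the SC barriers). Orientability of the limit is inherited: each $\Sigma_R$ is integral hence carries an orientation, and mass convergence plus the constancy/structure theorem for minimizing integral currents show the multiplicity-one limit is a genuine oriented locally rectifiable current. Finally one upgrades ``asymptotic to $\Gamma$'' to the clean statement $\partial_\infty \spt \Sigma = \Gamma$ by the same neighborhood-exhaustion of $\partial_\infty M^n \setminus \Gamma$ by SC convex complements.

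I expect the main obstacle to be the \textbf{barrier argument with $\Z$-coefficients near infinity}: establishing a uniform-in-$R$ confinement of $\spt \Sigma_R$ using only the $C^2$ convex sets $M \setminus \Omega$ supplied by the SC condition, and ensuring the radial comparison fillings $\Gamma_R$ have mass bounded independently of $R$ on compact sets so that Federer--Fleming applies. On a general SC manifold the convex sets are only $C^2$ and one has no curvature pinching, so the delicate point is to show that a minimizing integral current touching $\bd(M\setminus\Omega)$ tangentially can be strictly decreased in mass by projection — this requires the standard convexity/projection lemma for currents (e.g. as in \cite{BL}) and a careful check that on our manifolds (where the curvature upper bound $-a^2(\rho)$ may decay to $-\infty$, so the geometry near infinity is \emph{not} bounded) the relevant nearest-point projections onto convex sets are still $1$-Lipschitz and orientation-preserving. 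Rotational symmetry should make all of this tractable because the convex complements $M\setminus\Omega$ can be taken rotationally adapted and the radial fillings are explicit, reducing the mass bounds to elementary estimates on $f_a$ and its derivative.
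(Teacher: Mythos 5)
Your overall scaffolding (exhaust by balls, solve Plateau in $\bar B(o,R)$ over $\Z$, confine $\spt\Sigma_R$ inside the convex barrier $\Lambda$ furnished by the SC condition, then pass to a limit by compactness plus a diagonal argument) matches the paper's. The genuine gap is that you have no mechanism for the \emph{local} mass bound $\sup_R \mass\bigl(\Sigma_R\llcorner\bar B(o,r)\bigr)<\infty$ for fixed $r$, which is what Federer--Fleming actually requires. Your minimizing comparison only gives the global bound $\mass(\Sigma_R)\le\mass(\Gamma_R)$, and $\mass(\Gamma_R)\to\infty$; moreover, the paper remarks explicitly that the Harnack-type upper density bound (Proposition~\ref{C}), which does the job over $\Z_2$, ``clearly\ldots can not hold for integer multiplicity currents.'' So the $\Z_2$ machinery you are leaning on does not transfer, and your stated main obstacle (the barrier argument) is misidentified: the barrier step is insensitive to the coefficient ring, since it only concerns supports.

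The missing ingredient is the monotonicity formula at the pole (Lemma~\ref{mf}): for a minimizing current $S$, the ratio $t\mapsto\mass(S\llcorner\bar B(o,t))/\Vol_k(\bar B^k(o,t))$ is non-decreasing, because slicing by $\rho$ and filling the slice $R_t$ with the radial cone $\cone_o V_t$ gives the exact differential inequality $m(t)\le (\beta_k(t)/\beta_k'(t))\,m'(t)$, and this exactness uses the rotationally symmetric metric $dr^2+f^2(r)\,d\vartheta^2$ together with the ball being centred at the pole $o$. Combined with the cone comparison $\mass(\Sigma_i)\le\Vol_k(K_i)=c_k\Vol_k(\bar B^k(o,i))$ (where $K_t=\cone_o\Gamma\cap\bar B(o,t)$ and $[\Gamma_t]=\partial[K_t]$, which is where orientability of $\Gamma$ enters), monotonicity yields the uniform local bound $\mass(\Sigma_i\llcorner\bar B(o,r))\le c_k\Vol_k(\bar B^k(o,r))$ that replaces Proposition~\ref{C}. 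This is the precise point where rotational symmetry is used and the step your argument cannot do without. A secondary remark: your claim that the limit is ``multiplicity-one'' by a constancy/structure theorem is neither needed nor established; the conclusion is just $\Sigma\in\cR_{k,\loc}(M^n)$, and the paper's simpler boundary data $\Gamma_t=\cone_o\Gamma\cap S(o,t)$ (rather than a radially pushed-forward filling $A\subset S_oM^n$) avoids any ambiguity about what the approximating boundaries are.
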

We can get rid of the assumption $M$ being rotationally symmetric by assuming that sectional curvatures satisfy \eqref{aRcondition1} with some increasing 
functions $a$ and $b$ such that $b-a^2/b$ is integrable over $[0,\infty)$. Note that the integrability condition \eqref{integrcond} allows the difference between curvature upper and lower bounds go to $\infty$. We have the following ''current version" of \cite[Theorem 3]{RTo}.
\begin{thm}
\label{TSMC4}
Let $M^{n},\ n\geq3,$ be a Cartan-Hadamard manifold satisfying the SC condition. Suppose that  
\[
-(b\circ\rho)^{2}(x)\le K(P)\le-(a\circ\rho)^{2}(x)
\]
for all $2$-dimensional subspaces $P\subset T_{x}M,\ x\in M$, where $\rho(x)=d(x,o)$ and
$a,b\colon [0,\infty)\to [0,\infty)$ are increasing functions satisfying
\begin{equation}\label{integrcond}
\int_{0}^{\infty}\frac{b^2(t)-a^2(t)}{b(t)}\,dt<\infty.
\end{equation}
Identify $\partial_{\infty}M^n$ with the unit $(n-1)$-sphere $S_o M^n\subset T_o M^n$. Let $\Gamma\subset S_o M^n$ be a closed, smoothly embedded, orientable $(k-1)$-dimensional submanifold, with $2\leq k\leq n-1$. 
Then there exists a complete, absolutely area minimizing, locally rectifiable $k$-current $\Sigma$ in $M^n$ asymptotic 
to $\Gamma$ at infinity, i.e. $\partial_{\infty}\spt\Sigma=\Gamma$. 
\end{thm}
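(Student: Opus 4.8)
The plan is to reproduce the argument used for Theorem~\ref{TSMC3}, the rotational symmetry of $M$ being replaced by a curvature--comparison estimate which, under \eqref{integrcond}, provides exactly the control that symmetry gave there. In the proof of Theorem~\ref{TSMC3} symmetry was used in only two places: to guarantee that the ``radial projections'' of $\Gamma$ onto the geodesic spheres $\partial B(o,R)$ have uniformly controlled geometry, and, through this, to obtain uniform lower mass bounds for the approximating minimizers; the confinement of these minimizers at infinity used only the SC condition. So it suffices to recover those two ingredients under the hypotheses of Theorem~\ref{TSMC4}.

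First I would establish the comparison estimate. Since $0\le a\le b$ with $b$ increasing, \eqref{integrcond} is equivalent to $\int_0^\infty (b(t)-a(t))\,dt<\infty$, because $\tfrac{b^2-a^2}{b}=(b-a)\tfrac{b+a}{b}$ with $1\le\tfrac{b+a}{b}\le 2$. Writing $f_a,f_b$ for the solutions of \eqref{Jacobi_eq} associated with $a$ and $b$ (so $f_a''=a^2f_a$, $f_b''=b^2f_b$), the quantity $W:=f_b'f_a-f_a'f_b$ satisfies $W'=(b^2-a^2)f_af_b\ge 0$, $W(0)=0$, hence $\tfrac{d}{dr}\log(f_b/f_a)=W/(f_af_b)\ge 0$; solving this as a Gr\"onwall/Riccati identity and using that the ``damping'' $(f_af_b)'/(f_af_b)=f_a'/f_a+f_b'/f_b$ is, for $r$ large, bounded below by a multiple of $b(r)$ (the Liouville--Green asymptotics $f_b'/f_b\sim b$ for $f_b''=b^2f_b$), one gets $\int_0^\infty\tfrac{d}{dr}\log(f_b/f_a)\,dr\lesssim\int_0^\infty\tfrac{b^2-a^2}{b}\,dr<\infty$, i.e.\ $\sup_{r>0}f_b(r)/f_a(r)<\infty$. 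By Rauch/Sturm comparison applied to \eqref{aRcondition1} (cf.\ the Jacobi estimates behind Proposition~\ref{jacest}), the metric induced on $\partial B(o,R)$ lies between $f_a(R)^2$ and $f_b(R)^2$ times the round metric of $S^{n-1}$; hence, after rescaling by $f_a(R)^{-1}$, these metrics are uniformly (in $R$) bi-Lipschitz to the round metric, and, identifying every $\partial B(o,R)$ with $S_oM^n\cong\partial_\infty M^n$ along radial geodesics, the renormalized radial projections are uniformly bi-Lipschitz. This is the step where \eqref{integrcond} enters.

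With this in hand I would run the usual exhaustion. Set $\Gamma_R:=\pi_R(\Gamma)\subset\partial B(o,R)$, the radial projection of $\Gamma$ (where $\pi_R(v)=\exp_o(Rv)$); by the previous step the rescaled submanifolds $f_a(R)^{-1}\Gamma_R$ are smoothly embedded $(k-1)$-submanifolds with geometry bounded uniformly in $R$. By Federer--Fleming there is a mass-minimizing integral $k$-current $T_R$ in $\overline{B(o,R)}$ with $\partial T_R=[\Gamma_R]$, and since $\overline{B(o,R)}$ is convex, $\spt T_R\subset\overline{B(o,R)}$. For the confinement at infinity: given $\xi\in\partial_\infty M^n\setminus\Gamma$, the SC condition provides a $C^2$ open $\Omega$ with $\xi\in\ir\partial_\infty\Omega$, $\partial_\infty\Omega$ inside a neighbourhood of $\xi$ disjoint from $\Gamma$, and $M^n\setminus\Omega$ convex; since $\Gamma_R\to\Gamma$ in the cone topology, $\Gamma_R\subset M^n\setminus\Omega$ for $R$ large, so minimality forces $\spt T_R\subset M^n\setminus\Omega$. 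For the mass bounds: coning $\Gamma_R$ off $o$ gives $\mass(T_R\llcorner B(o,r))$ bounded above independently of $R\ge r$, while the monotonicity formula on $M^n$ (valid under the two-sided bounds \eqref{aRcondition1}) together with the uniform bounded geometry of the rescaled $\Gamma_R$ yields, as in \cite{AndInv, BL}, a uniform lower mass bound $\mass(T_R\llcorner B(o,r))\ge c(r)>0$ and, more precisely, a uniform lower density bound for $\spt T_R$ along the cone $\overline C(\Gamma):=\bigcup_{\xi\in\Gamma}\gamma_\xi$.

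Finally, the uniform local mass bounds and the compactness theorem for locally rectifiable currents give a subsequence $T_R\to\Sigma$ in the local flat topology, with $\Sigma\in\cR_{k,\loc}(M^n)$; lower semicontinuity of mass together with the confinement (no mass escapes through any $\Omega$ as above) shows $\Sigma$ is absolutely area minimizing; the lower density bound shows $\Sigma\neq 0$ and that $\spt\Sigma$ accumulates at every point of $\Gamma$, so $\Gamma\subset\partial_\infty\spt\Sigma$, while the confinement gives $\partial_\infty\spt\Sigma\subset\Gamma$, whence $\partial_\infty\spt\Sigma=\Gamma$; completeness of $\Sigma$ then follows as in \cite{AndInv, BL}. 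I expect the main obstacle to be exactly the non-collapsing, i.e.\ keeping the lower mass/density bound uniform in $R$ and alive in the limit: this is what forces the use of the renormalized radial projections being uniformly bi-Lipschitz, hence of the boundedness of $f_b/f_a$, hence of the integrability hypothesis \eqref{integrcond}; by contrast the confinement step is a routine consequence of the SC condition.
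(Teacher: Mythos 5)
Your proposal follows essentially the same strategy as the paper: the heart of the matter is to show $\sup_{t>0}f_b(t)/f_a(t)<\infty$ under \eqref{integrcond}, which the paper proves as Lemma~\ref{implemma} via precisely the Wronskian identity $(f_b'f_a-f_a'f_b)'=(b^2-a^2)f_af_b$ that you invoke (the paper makes the decay of $1/(f_af_b)$ rigorous with the pointwise lower bounds $f_a(s)\ge f_a(t)\cosh(a(t)(s-t))$, $f_b(s)\ge f_b(t)\cosh(b(t)(s-t))$ and a Fubini computation, rather than the Liouville--Green heuristic you gesture at, but the mechanism is the same). One then bounds the cone volume $\Vol_k(\cone_o\Gamma\cap\bar{B}(o,t))\lesssim\int_0^t f_b^{k-1}\lesssim\int_0^t f_a^{k-1}$, uses the comparison monotonicity formula with reference profile $\alpha_k\int_0^tf_a^{k-1}$ (Lemma~\ref{mf2}, requiring only the upper bound $K\le -a^2$), and concludes by the SC barrier and the compactness/diagonal argument, exactly as you sketch. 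The detour through uniformly bi-Lipschitz rescaled radial projections is an overcomplication but not incorrect.

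There is, however, a small but real bookkeeping slip you should fix. You write that ``coning $\Gamma_R$ off $o$ gives $\mass(T_R\llcorner B(o,r))$ bounded above independently of $R\ge r$,'' and that the monotonicity formula yields the \emph{lower} mass bound. This is backwards. Coning only bounds the \emph{total} mass $\mass(T_R)\le\Vol_k(K_R)$, which blows up with $R$; it says nothing directly about $T_R\llcorner B(o,r)$ (you cannot restrict both competitors to $B(o,r)$ and compare, since the restrictions no longer share a boundary). It is the monotonicity inequality
\[
\frac{\mass\bigl(T_R\llcorner\bar{B}(o,r)\bigr)}{\int_0^r f_a^{k-1}}\le\frac{\mass(T_R)}{\int_0^R f_a^{k-1}}\le c_k\qquad(r\le R)
\]
that converts the cone volume bound into a uniform \emph{upper} local mass bound, and it is precisely for this conversion that $f_b\lesssim f_a$ is needed. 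The \emph{lower} bound (non-collapsing) is a separate, universal ingredient: the standard lower density bound for area-minimizing currents (Lemma~\ref{ldb} of the paper) applied at a point of $\spt T_R$ trapped in the compact set $\Lambda\setminus W$, which is supplied by the SC barrier together with the nontriviality of $\Gamma_R$; it does not use the geometry of the rescaled $\Gamma_R$ at all. So the integrability hypothesis \eqref{integrcond} enters through the \emph{upper} mass bound, not the lower one.
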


We refer to \cite{alm}, \cite{federer2}, and the recent \cite{DeLeSp} for discussions on the regularity of area minimizing locally rectifiable currents; see also \cite{morgan} and \cite{DeLe} for surveys. In particular, a $k$-dimensional area minimizing rectifiable current (or more precisely its support) in $\R^n$ is a smooth, embedded manifold on 
the interior except for a singular set of Hausdorff dimension at most $k-2$. In the codimension $1$ case, i.e. $k=n-1$, the singular set is of Hausdorff 
dimension at most $n-8$.

The plan of this paper is the following: in Section~\ref{sec:constext}, we construct smooth extensions of certain "angular" functions with gradient and 
Hessian controlled in terms of curvature bounds. These extensions will play an important part in the proof of Theorem \ref{thmSC} under assumptions 
\eqref{pinch1}. In Section~\ref{sec:sc}, we give the proof of Theorem \ref{thmSC}.
Finally, in the last section, we prove Theorems \ref{TSMC}, \ref{TSMC2}, \ref{TSMC3}, and \ref{TSMC4}.\\
\begin{remark}\label{kas}
While completing this manuscript, we found the article \cite{kasue} by Kasue, where he introduced the following notion of convexity at infinity in order to 
solve the asymptotic Dirichlet problem for the Laplacian. Let $M$ be a Hadamard manifold. Following Kasue, we say that the condition (R) holds at a point 
$x\in\partial_\infty M$ if for any neighborhood $U$ of $x$, there exists a neighborhood $V$ of $x$ such that $V\subset U$ and $M\setminus V$ is (totally) convex.
He proved that the asymptotic Dirichlet problem for the Laplacian is solvable for every $h\in C(\partial_\infty M)$ provided the sectional curvature is 
bounded from above by a negative constant and the condition (R) holds at every point $x\in\partial_\infty M$. Clearly, the condition (R) implies 
both the SC condition and the convex conic neighborhood condition.
\end{remark}
\subsubsection*{Acknowledgements}
The authors would like to thank the International Centre of Theoretical Physics (ICTP), Trieste, Italy,  where part of this work has been done, for its 
support and kind hospitality. We are grateful to  Urs Lang for his valuable comments on previous versions of the paper, in particular, for pointing out an 
incorrect assumption in Theorem~\ref {TSMC} in the first version.

\section{Construction of the extension}\label{sec:constext}
In this section we will construct smooth extensions of certain ''angular" functions $h\colon \partial_\infty M\to\R$ associated to boundary points 
$x_0\in \partial_\infty M$ so that the gradients and the Hessians
of the extended functions are controlled in terms of curvature bounds. These functions will be later used in the proof of the SC condition.
Throughout this section we assume that sectional curvatures of $M$ are bounded
both from above and below by
\begin{equation}
\label{curv_assump}-(b\circ\rho)^{2}(x)\le K(P)\le-(a\circ\rho
)^{2}(x)
\end{equation}
for all $2$-dimensional subspaces $P\subset T_{x}M$ and for all $x$ in an appropriate subset of $M$. 
Here $\rho=\rho^o$ stands for the distance function $\rho(x)=d(x,o)$ to a fixed point $o\in M$. Furthermore, $a$
and $b$ are smooth functions $[0,\infty)\to[0,\infty)$ that are constant in
some neighborhood of $0$, $b\ge a$, and they are subject to certain growth conditions that we will specify in due course.
The curvature bounds are needed to control first and second order derivatives
of the ''angular" functions $h$. 
To this end, if $k\colon[0,\infty)\to[0,\infty)$ is a smooth function, we
denote by $f_{k}\in C^{\infty}\bigl([0,\infty)\bigr)$ the solution to the
initial value problem
\begin{equation}
\label{Jacobi_eq}\left\{
\begin{aligned} f_k(0)&=0, \\ f_k'(0)&=1, \\ f_k''&=k^2f_k. \end{aligned} \right.
\end{equation}
It follows that the solution $f_{k}$ is a smooth, non-negative, and increasing function.
We will use extensively various estimates obtained in \cite{HoVa} (and 
originated in the unpublished licentiate thesis \cite{Va_lic}). Therefore
for readers' convenience we use basically the same notation
as in \cite{HoVa}. Thus we let $M$ be a Cartan-Hadamard manifold, $\partial_\infty M$
the sphere at infinity, and $\overline M=M\cup \partial_\infty M$. Recall that the sphere
at infinity is defined as the set of all equivalence classes of unit speed
geodesic rays in $M$; two such rays $\gamma_{1}$ and $\gamma_{2}$ are
equivalent if $\sup_{t\ge0}d\bigl(\gamma_{1}(t),\gamma_{2}(t)\bigr)< \infty$.
For each $x\in M$ and $y\in\overline M\setminus\{x\}$ there exists a unique unit
speed geodesic $\gamma^{x,y}\colon\mathbb{R}\to M$ such that $\gamma^{x,y}%
_{0}=x$ and $\gamma^{x,y}_{t}=y$ for some $t\in(0,\infty]$. If $v\in
T_{x}M\setminus\{0\}$, $\alpha>0$, and $r>0$, we define a cone
\[
C(v,\alpha)=\{y\in\overline M\setminus\{x\}:\sphericalangle(v,\dot\gamma^{x,y}%
_{0})<\alpha\}
\]
and a truncated cone
\[
T(v,\alpha,r)=C(v,\alpha)\setminus\overline B(x,r),
\]
where $\sphericalangle(v,\dot\gamma^{x,y}_{0})$ is the angle between vectors
$v$ and $\dot\gamma^{x,y}_{0}$ in $T_{x} M$. All cones and open balls in $M$
form a basis for the cone topology on $\overline M$.

We start with the following consequence of the Rauch comparison theorem; see e.g. \cite[Lemma 2.6]{HoVa} for a proof.
\begin{lemma}\label{kulma_arvio}
Let $x_0\in M\pois\{o\}$,
$U=M\pois\gamma^{o,x_0}(\R)$, and define $\theta:U\to[0,\pi]$, 
$\theta(x)=\ang_o(x_0,x)$.
Let $x\in U$ and $\gamma=\gamma^{o,x}$.
Suppose that
\[K_M(P)\le -a(t)^2
  \]
for every $t>0$ and for every $2$-dimensional subspace $P\subset T_{\gamma(t)}M$ that contains the radial vector $\dot\gamma_t$.
Then
\[|\del\theta(x)|\le\frac1{(f_a\circ\rho)(x)}.
  \]
\end{lemma}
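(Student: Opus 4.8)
The plan is to reduce the estimate to the Rauch comparison theorem by working in normal (polar) coordinates centred at $o$. Write $x=\exp_o(\rho v)$ with $\rho=\rho(x)$ and $v=\dot\gamma^{o,x}_0\in S_oM$, and set $v_0=\dot\gamma^{o,x_0}_0$. Since $\theta(x)=\ang_o(x_0,x)=\ang(v_0,v)$ depends only on the direction $v$ and not on $\rho$, the function $\theta$ is constant along the radial geodesic $\gamma=\gamma^{o,x}$. Hence $\del\theta(x)$ is orthogonal to $\dot\gamma_\rho$, i.e.\ it is tangent to the geodesic sphere $S(o,\rho)$ and coincides with the gradient of the restriction $\theta|S(o,\rho)$. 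So it suffices to bound $|\del\theta(x)|$ computed intrinsically on $S(o,\rho)$.

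Next I would identify $\theta|S(o,\rho)$ with an intrinsic distance function on the model sphere $S_oM$ equipped with the round metric it inherits from $T_oM$. On a Cartan--Hadamard manifold the map $\phi_\rho\colon S_oM\to S(o,\rho)$, $\phi_\rho(w)=\exp_o(\rho w)$, is a diffeomorphism, and by definition $\theta(\phi_\rho(w))=\ang(v_0,w)=d_{S_oM}(v_0,w)=:\bar\theta(w)$. Thus $\theta|S(o,\rho)=\bar\theta\circ\phi_\rho^{-1}$. Because $x\in U=M\pois\gamma^{o,x_0}(\R)$ we have $v\neq\pm v_0$, so $\bar\theta$ is smooth near $v$ with $|\del^{S_oM}\bar\theta|\equiv 1$. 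Differentiating, $d\theta|_x=d\bar\theta|_v\circ(d\phi_\rho|_v)^{-1}$, whence $|\del\theta(x)|\le\|(d\phi_\rho|_v)^{-1}\|=1/\Lambda$, where $\Lambda$ is the smallest singular value of $d\phi_\rho|_v\colon T_vS_oM\to T_xS(o,\rho)$. The estimate therefore comes down to showing $\Lambda\ge f_a(\rho)$.

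For a unit vector $w\in T_vS_oM$, identified with a vector in $v^{\perp}\subset T_oM$, the image $d\phi_\rho(w)$ is the value $J(\rho)$ of the Jacobi field $J$ along $\gamma$ with $J(0)=0$ and $J'(0)=w$ (the standard description of $d\exp_o$ via Jacobi fields; note for $K\equiv 0$ one gets $f_0(t)=t$, consistent with $\phi_\rho$ scaling by $\rho$ in $\R^n$). The hypothesis bounds exactly the sectional curvatures of planes $P\subset T_{\gamma(t)}M$ containing the radial vector $\dot\gamma_t$ by $-a(t)^2$, which is precisely the data controlling the Jacobi equation along $\gamma$. Comparing it with $f_a''=a^2f_a$, the Rauch comparison theorem in its variable-curvature form (as used throughout \cite{HoVa}) gives $|J(\rho)|\ge f_a(\rho)|w|=f_a(\rho)$. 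Hence $\Lambda\ge f_a(\rho)$, and combining with the previous paragraph yields $|\del\theta(x)|\le 1/f_a(\rho)=1/(f_a\circ\rho)(x)$.

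The one point that must be stated with care — and the main (if modest) obstacle — is the direction of the comparison in the last step: one needs the principle that \emph{more negative} curvature forces \emph{faster} Jacobi growth, so that the curvature \emph{upper} bound $-a(t)^2$ yields a \emph{lower} bound $f_a(\rho)$ for $|J(\rho)|$, hence an \emph{upper} bound for the singular value of $(d\phi_\rho)^{-1}$ and thus for $|\del\theta(x)|$; one should also record that only planes containing the radial direction enter the Jacobi equation, which is exactly what the hypothesis provides. Everything else is the routine dictionary relating $d\exp_o$, Jacobi fields, and the round metric on $S_oM$.
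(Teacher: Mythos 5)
Your proof is correct, and it takes the approach the paper indicates: the paper introduces the lemma as "a consequence of the Rauch comparison theorem" and cites \cite[Lemma 2.6]{HoVa}, and your argument — reducing $|\nabla\theta|$ to the smallest singular value of $d\exp_o$ restricted to the sphere, identifying that differential with Jacobi fields vanishing at $o$, and applying Rauch with the curvature upper bound on radial planes to get $|J(\rho)|\ge f_a(\rho)|J'(0)|$ — is exactly this Rauch argument, with all the relevant points (the direction of the comparison, the orthogonality of $\nabla\theta$ to $\dot\gamma$, and the fact that only planes containing the radial vector enter the Jacobi equation) correctly recorded.
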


For a given boundary point $x_{0}\in \partial_\infty M$, let 
\[
v_{0}=\dot\gamma^{o,x_{0}}_{0}\in S_oM=\{v\in T_oM\colon |v|=1\}
\] 
be the initial (unit) vector of the geodesic ray $\gamma^{o,x_{0}}$ from the fixed point $o\in M$. Furthermore, for any constant 
$L\in(8/\pi,\infty)$, we consider cones
\begin{equation}\label{defcone}
\Omega=C(v_0,1/L)\cap M
\end{equation}
and 
\[k\Omega=C(v_0,k/L)\cap M.
  \]
Suppose that
\[-(b\circ\rho)^2(x)\le K_M(P)\le -(a\circ\rho)^2(x)
  \]
for all $x\in 4\Omega$ and all $2$-dimensional subspaces $P\subset T_xM$. We assume that $b$ is monotonic, $b(0)>0$, and that there exist positive 
constants $T_1, \varepsilon,\tilde{\varepsilon}$, and $C_1$ such that
\begin{align}
  \tag{C1}\label{A1}
  a^2(t)&\geq \dfrac{1+\varepsilon}{ t^2\log t},\\  
  \tag{C2}\label{B1}
  b^2(t)&\geq \dfrac{(\log t)^{2\tilde{\varepsilon}}}{t^2},
  \end{align}
for all $t\ge T_1$ and
\begin{align}
  \tag{C3}\label{A3}
  b(t+1)&\le C_1b(t), \\
  \tag{C4}\label{A4}
  b(t/2)&\le C_1b(t)
  \end{align}
for all $t\ge 0$.
Of course, \eqref{A3} (respectively, \eqref{A4}) trivially holds if $b$ is decreasing (respectively, increasing).
It is also worth pointing that the condition \eqref{B1} is not restrictive since $-b^2(t)$ represents the curvature lower bound. 
We collect all these constants and functions together to a data
\[C=(a,b,T_1, \varepsilon,\tilde{\varepsilon},C_1,n).
  \]
Next we define a 
function $h:\partial_\infty M\to\mathbb{R}$ and its (crude) extension $\tilde h:\overline M\to\mathbb{R}$ by
\begin{equation}
\label{eq:hoodef}h(y)=\min\bigl(1,L\sphericalangle(v_{0},\dot\gamma^{o,y}_{0})\bigr),\ y\in \partial_\infty M,
\end{equation}
and
\begin{equation}
\label{eq:hoodeftilde}\tilde h(x)=\min\Bigl(1,\max\bigl(2-2\rho
(x),L\sphericalangle(v_{0},\dot\gamma^{o,x}_{0})\bigr)\Bigr),\ x\in\overline{M},
\end{equation}
respectively.
Then $\tilde h\in C(\overline M)$ and $\tilde h|\partial_\infty M=h$. As the final step in the construction of an ''angular" function we smooth out 
$\tilde{h}$ as in \cite[Section 3.1]{HoVa} to get an extension $h\in C^{\infty}(M)\cap C(\overline M)$. Since the assumptions \eqref{A1}-\eqref{A4}
for functions $a$ and $b$ are weaker than those in \cite{HoVa} (in particular, $b$ need not satisfy condition (A5) in \cite{HoVa}), we 
must present another proofs for some statements and lemmas. 
However, for readers' convenience we list all lemmas that are needed and refer to \cite{HoVa} whenever the proofs there apply verbatim in our setting.

\begin{lemma}\cite[Lemma 3.8]{HoVa}\label{jarj_vaihto}
Let $N$ be a Cartan-Hadamard manifold and $f:N\times N\to\R$ a function. 
Suppose that $f(\cdot,y)\in C^\infty(N)$ for all $y\in N$ and that
\[(x,y)\mapsto X_m(X_{m-1}(\dotsm(X_1(f(\cdot,y)))\dotsm))(x)
  \]
is continuous for all smooth vector fields $X_i\in\cal T(N)$ and all $m\ge0$.
Suppose also that
each $x_0\in N$ has a neighborhood $V\ni x_0$ such that the set
\[\bigcup_{x\in V}\supp f(x,\cdot)
  \]
is bounded.
Define $u:N\to\R$,
\[u(x)=\int_N f(x,y)\,dm_N(y).
  \]
Then $u\in C^\infty(N)$ and 
\begin{equation}\label{der1}
  Xu=\int_N X\bigl(f(\cdot,y)\bigr)\,dm_N(y) 
  \end{equation}
for all $X\in TN$, and
\begin{equation}\label{der2}
  D^2u(X,Y)=\int_N D^2\bigl(f(\cdot,y)\bigr)(X,Y)\,dm_N(y).
  \end{equation}
for all $X,Y\in T_xN$, $x\in N$.
\end{lemma}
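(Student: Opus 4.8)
The plan is to reduce the statement to the classical Leibniz rule for differentiation under the integral sign in $\R^n$, applied chart by chart. Fix $x_0\in N$ and, using the last hypothesis, choose a neighborhood $V\ni x_0$ such that $K:=\cl\bigl(\bigcup_{x\in V}\supp f(x,\cdot)\bigr)$ is bounded; since $N$ is a Cartan--Hadamard manifold it is complete, so by Hopf--Rinow $K$ is compact and $m_N(K)<\infty$. The $m=0$ case of the continuity hypothesis gives that $f$ is jointly continuous, so for each $x\in V$ the function $f(x,\cdot)$ is continuous with support in $K$, hence integrable, and $u(x)=\int_K f(x,y)\,dm_N(y)$ is an integral of $f(x,\cdot)$ against a \emph{fixed finite measure} on the \emph{fixed compact set} $K$; in particular the $y$-integration no longer interacts with differentiation in the $x$-variable.

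Next I would pass to a coordinate chart $\vp\colon W\to\R^n$ around $x_0$ with $\overline{W'}\subset W\subset V$ for a smaller neighborhood $W'\ni x_0$. The key point is to upgrade the hypothesis, which is phrased in terms of global smooth vector fields, to joint continuity of all iterated coordinate partial derivatives of $x\mapsto f(x,y)$ on $W'\times K$. For this, pick a cutoff $\chi\in C_c^\infty(W)$ with $\chi\equiv1$ on a neighborhood of $\overline{W'}$; then each $\chi\,\partial/\partial x^i$ extends to a field in $\mathcal T(N)$ agreeing with $\partial/\partial x^i$ on $W'$. Composing $m$ such fields and invoking the continuity hypothesis shows that, for every multi-index $\beta$, the map $(x,y)\mapsto\partial^\beta_x\bigl(f(\cdot,y)\bigr)(x)$ is continuous on $W'\times N$. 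Restricting to a relatively compact open $W''$ with $x_0\in W''$ and $\overline{W''}\subset W'$, each such map is bounded on the compact set $\overline{W''}\times K$.

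With these ingredients the classical argument runs: proceeding by induction on $|\beta|$ and using the mean value theorem together with dominated convergence, the domination being by $\bigl(\sup_{\overline{W''}\times K}|\partial^{\beta'}_x f|\bigr)\,m_N(K)<\infty$ for $|\beta'|=|\beta|$, one may differentiate under the integral sign, so $u\circ\vp^{-1}\in C^\infty(\vp(W''))$ with $\partial^\beta(u\circ\vp^{-1})(\vp(x))=\int_K\partial^\beta_x\bigl(f(\cdot,y)\bigr)(x)\,dm_N(y)$. Since $x_0$ was arbitrary, $u\in C^\infty(N)$. Formula \eqref{der1} then follows by expanding $X=\sum_i X^i\,\partial/\partial x^i$ in the chart and using linearity of the integral; formula \eqref{der2} follows from the identity $D^2u(X,Y)=X(Yu)-(\nabla_XY)u$, the corresponding identity for each $f(\cdot,y)$, and two applications of \eqref{der1}.

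The only genuine subtlety, and hence the step I would treat most carefully, is the localization and extension bookkeeping: verifying that the hypothesis stated via global smooth vector fields really delivers joint continuity, and thence local boundedness on $\overline{W''}\times K$, of \emph{all} iterated coordinate partial derivatives near $x_0$, and that the bounded-support set may be taken compact via completeness. Once joint continuity and the uniform domination are in hand there is no analytic difficulty left; the rest is the standard finite-measure Leibniz rule. This is exactly the argument of \cite[Lemma 3.8]{HoVa}, which applies verbatim in the present setting.
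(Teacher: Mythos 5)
The paper does not provide its own proof of this lemma; it is stated as a citation to \cite{HoVa}, Lemma 3.8, and used without argument. Your reconstruction is correct and is the expected one: reduce to the fixed compact set $K$ (Hopf--Rinow turns boundedness into compactness because a Cartan--Hadamard manifold is complete), use cutoff-extended coordinate fields to upgrade the global-vector-field continuity hypothesis to joint continuity, and hence local boundedness, of all iterated coordinate partials on $\overline{W''}\times K$, run the standard finite-measure Leibniz/mean-value/dominated-convergence argument in a chart, and deduce \eqref{der1} by linearity and \eqref{der2} from the Hessian identity $D^2u(X,Y)=X(Yu)-(\nabla_XY)u$ together with two applications of \eqref{der1}.
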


The following lemma tells us that for given $k>0$ and $x\in M$, the function $b\circ\rho$ behaves essentially as a constant
in the set $\{y\in M:b(\rho(y))d(x,y)\le k\}$. 
Since $-(b\circ\rho)^2$ represents the curvature lower bound in $4\Omega$, this enables the use of
comparison theorems. Since the proof given in \cite{HoVa} relies on an assumption on the function $b$ that need not be valid in the current paper, we will
present a new proof.

\begin{lemma}\cite[Lemma 3.10]{HoVa}\label{vah_apu}
Let $k>0$.
There exists a constant $c_{1,k}=c_{1,k}(C,k)>1$ such that if 
$x,y\in M$ and $b(\rho(y))d(x,y)\le k$,
then
\[\frac1{c_{1,k}}b\bigl(\rho(x)\bigr)\le b\bigl(\rho(y)\bigr)\le c_{1,k}\,b\bigl(\rho(x)\bigr).
  \]
\end{lemma}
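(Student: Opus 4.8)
The plan is to reduce the statement to a one-dimensional estimate on the function $b$ along a path joining $x$ to $y$, and then exploit the doubling-type conditions \eqref{A3} and \eqref{A4} together with the lower bound \eqref{B1}. First I would fix $k>0$ and argue that it suffices, by symmetry of the hypothesis (replace the roles of $x$ and $y$ and observe that $b(\rho(x))d(x,y) \le c\,k$ once we know $b(\rho(y))\asymp b(\rho(x))$, so one can bootstrap), to prove a one-sided bound $b(\rho(y)) \le c_{1,k} b(\rho(x))$ and the corresponding lower bound. The key geometric input is the triangle inequality $|\rho(x)-\rho(y)| \le d(x,y)$, so the hypothesis $b(\rho(y))d(x,y)\le k$ gives $|\rho(x)-\rho(y)| \le k/b(\rho(y))$. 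Combined with \eqref{B1}, which forces $b(t)\to\infty$ as $t\to\infty$ (more precisely $b(t)\ge (\log t)^{\tilde\varepsilon}/t$ is not itself large, so one must be slightly careful here — actually the relevant point is just that $b$ is bounded below by a positive constant on $[0,\infty)$ since $b(0)>0$ and $b$ is monotonic; if $b$ is increasing this is immediate, and if $b$ is decreasing then $b(t)\ge 0$ and the lower bound \eqref{B1} must be used to control how small $b$ can get), we get a quantitative bound on how far apart $\rho(x)$ and $\rho(y)$ can be.

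The core of the argument is then: from $b(\rho(y))d(x,y)\le k$ we have $\rho(x) \le \rho(y) + k/b(\rho(y))$. I would split into the two monotonicity cases for $b$. If $b$ is \emph{decreasing}, then $b(\rho(x)) \le b(\rho(y))$ when $\rho(x)\ge\rho(y)$, which is the easy direction; for the reverse we use that $\rho(y)\le \rho(x)+k/b(\rho(y))$, and iterate \eqref{A3}: since $k/b(\rho(y))$ is bounded by $k/b(0)^{-1}$-type quantities only if $b$ is bounded below, I would instead first establish $b(\rho(y))\ge b_0 := \inf b > 0$ (using $b(0)>0$ together with \eqref{B1} to rule out $b$ decaying to $0$, since \eqref{B1} gives a positive lower bound that, while decaying, still lets us bound $\rho(y)-\rho(x) \le k/b(\rho(y)) \le k\rho(y)/(\log\rho(y))^{\tilde\varepsilon}$ — this is sublinear in $\rho(y)$, so $\rho(y)/\rho(x)$ stays bounded, hence $\rho(y)-\rho(x)$ is bounded by a fixed multiple of $\rho(x)$ plus a constant). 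Then $\rho(y) \le C_2\rho(x)+C_3$ for constants depending only on $k$ and $C$, and applying \eqref{A4} roughly $\log_2 C_2$ times together with \eqref{A3} roughly $C_3$ times yields $b(\rho(x)) \le c_{1,k} b(\rho(y))$. If $b$ is \emph{increasing}, the roles are symmetric: $b(\rho(x)) \le b(\rho(y))$ is now automatic once $\rho(x)\le\rho(y)+k/b(\rho(y))$ is upgraded via \eqref{A3} to absorb the additive term $k/b(\rho(y))\le k/b(0)$, which is now a genuine constant, so only finitely many applications of \eqref{A3} (about $k/b(0)$ of them, rounded up) are needed; the reverse inequality uses $\rho(y) \le \rho(x) + k/b(\rho(y)) \le \rho(x)+k/b(0)$ the same way.

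The main obstacle I anticipate is the case where $b$ is decreasing and $b(\rho(y))$ could a priori be very small, so that $k/b(\rho(y))$ — the allowed gap between $\rho(x)$ and $\rho(y)$ — is large and not controlled by a fixed constant. This is precisely where \eqref{B1} must be invoked nontrivially: it guarantees $b(t)\ge (\log t)^{\tilde\varepsilon}/t$, so the gap $k/b(\rho(y)) \le k\rho(y)(\log\rho(y))^{-\tilde\varepsilon}$ grows strictly slower than $\rho(y)$ itself, which is exactly enough to conclude $\rho(y) \le 2\rho(x)$ (say) for $\rho(x)$ large, reducing everything to a bounded number of applications of \eqref{A4}; for small $\rho(x)$ one uses that $b$ is bounded between $b(0)$ and its value on a compact interval. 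I would also note explicitly that the constant $c_{1,k}$ produced is increasing in $k$, and that once the estimate holds it automatically upgrades to the symmetric form in the statement since $b(\rho(y))d(x,y)\le k$ and the just-proved comparison give $b(\rho(x))d(x,y)\le c_{1,k}k$, so no genuine asymmetry remains.
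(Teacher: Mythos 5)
Your overall strategy — triangle inequality plus \eqref{B1} to make the gap $d(x,y)\le k/b(\rho(y))$ sublinear in $\rho(y)$, then finitely many applications of the doubling-type conditions — is the same as the paper's, and it does work. The paper is tighter: it splits directly on $\rho(y)\ge\tilde c:=\max\{T_1,\exp((4k)^{1/\tilde\varepsilon})\}$ versus $\rho(y)\le\tilde c$, and in the large-$\rho(y)$ case \eqref{B1} gives $d(x,y)\le\rho(y)/4$, whence $d(x,y)\le\rho(x)/3$ and $\tfrac23\rho(x)\le\rho(y)\le\tfrac43\rho(x)$; a \emph{single} application of \eqref{A4} then finishes, whereas your version produces a cruder $\rho(y)\le C_2\rho(x)+C_3$ and iterates. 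Two points in your write-up should be corrected. First, the interim claim that $b$ is bounded below by a positive constant on $[0,\infty)$ is false when $b$ is decreasing: \eqref{B1} only gives $b(t)\ge(\log t)^{\tilde\varepsilon}/t\to 0$, and indeed you drop this claim and fall back on sublinearity, which is the right move. Second, for decreasing $b$ the condition \eqref{A3} ($b(t+1)\le C_1 b(t)$) is trivially satisfied and carries no information, so ``iterate \eqref{A3}'' does nothing in that case; the only nontrivial doubling input there is \eqref{A4}, and the additive constant $C_3$ has to be absorbed multiplicatively (e.g.\ $C_3\le C_3\rho(x)$ for $\rho(x)\ge1$) before \eqref{A4} can be applied. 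The opening ``bootstrap by symmetry'' remark is also circular and unnecessary, since you end up proving both one-sided bounds directly anyway. Finally, note that the paper only redoes the decreasing case, citing \cite{HoVa} for the increasing case (where \eqref{A3} is genuinely used); your treatment of the increasing case is fine.
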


\begin{proof}
Let $x,y\in M$ be such that $b(\rho(y))d(x,y)\le k$. We may assume that $b$ is decreasing since the proof for an increasing $b$ in \cite{HoVa} applies
here, too. Note that the proof for the case of an increasing $b$ is the only point where the assumption \eqref{A3} is used. Suppose first that $\rho(y)\ge \max\bigl\{T_1,\exp\bigl((4k)^{1/\tilde{\varepsilon}}\bigr)\bigr\}$. By \eqref{B1},
\[
b\bigl(\rho(y)\bigr)\ge \frac{4k}{\rho(y)},
\]
and therefore
\[
d(x,y)\le \frac{k}{b\bigl(\rho(y)\bigr)}\le \frac{\rho(y)}{4}\le \frac{1}{4}\bigl(\rho(x)+d(x,y)\bigr).
\]
Consequently, $d(x,y)\le \rho(x)/3$, and so
\[
b\bigl(\rho(y)\bigr)\ge b\bigl(\rho(x)+d(x,y)\bigr)\ge b\bigl(\tfrac{4}{3}\rho(x)\bigr)\ge \frac{1}{C_1}b\bigl(\rho(x)\bigr).  
\]
On the other hand, 
\[
\rho(y)\ge \frac{2}{3}\rho(x),
\]
and therefore
\[
b\bigl(\rho(y)\bigr)\le b\bigl(\tfrac 23 \rho(x)\bigr)\le C_1 b\bigl(\tfrac 43 \rho(x)\bigr)\le C_1 b\bigl(\rho(x)\bigr).
\]
If $\rho(y)\le \max\bigl\{T_1,\exp\bigl((4k)^{1/\tilde{\varepsilon}}\bigr)\bigr\}=:\tilde{c}$, then 
\[
b(\tilde{c})\le b\bigl(\rho(y)\bigr) \le b(0),
\]
and therefore $d(x,y)\le k/b(\tilde{c})$. It follows that $\rho(x)\le \rho(y)+ d(x,y)\le \tilde{c}+ k/b(\tilde{c})$, and hence
\[
b\bigl(\rho(x)\bigr)\ge b\bigl(\tilde{c}+k/b(\tilde{c})\bigr)=c\,b(0)\ge c\, b\bigl(\rho(y)\bigr).
\]
On the other hand,
\[
b\bigl(\rho(x)\bigr)\le b(0)=c\, b(\tilde{c})\le c\, b\bigl(\rho(y)\bigr).
\]
\end{proof}

Fix $\chi\in C^\infty(\R)$ such that $0\le\chi\le 1$, $\supp\chi\subset[-2,2]$, 
and $\chi|[-1,1]\equiv 1$.
After establishing Lemma~\ref{vah_apu} we obtain the following two lemmas exactly as in \cite{HoVa}. 

\begin{lemma}\cite[Lemma 3.11]{HoVa}\label{vol_sopii}
For each $k>0$ there exists a constant $c_2=c_2(C,k)$ such that
\[m_M\Bigl(B\Bigl(x,\frac{k}{b(\rho(x))}\Bigr)\Bigr)\le\frac{c_2}{b(\rho(x))^n}
  \]
holds for all $x\in M$ that satisfy $B\bigl(x,k/b(\rho(x))\bigr)\subset 4\Omega$.
\end{lemma}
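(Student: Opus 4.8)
The plan is to reduce the estimate to the Bishop volume comparison theorem; the point is that on $B:=B\bigl(x,k/b(\rho(x))\bigr)$ the curvature lower bound $-(b\circ\rho)^{2}$ is comparable with the constant $-b(\rho(x))^{2}$.

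\emph{The auxiliary comparison.} First I would prove that there is a constant $c=c(C,k)\ge 1$ with $b(\rho(y))\le c\,b(\rho(x))$ for every $y\in B$, under the standing hypothesis $B\subset 4\Omega$; this is close to Lemma~\ref{vah_apu} and is obtained by the same kind of argument. Split according to the monotonicity of $b$. If $b$ is increasing, then $b\ge b(0)>0$, so $d(x,y)<k/b(\rho(x))\le k/b(0)$ and hence $\rho(y)<\rho(x)+k/b(0)$; iterating \eqref{A3} yields $b(\rho(y))\le C_{1}^{\lceil k/b(0)\rceil}b(\rho(x))$. If $b$ is decreasing and $\rho(x)\ge\tilde c:=\max\{T_{1},\exp((2k)^{1/\tilde\varepsilon})\}$, then \eqref{B1} forces $k/b(\rho(x))\le\rho(x)/2$, so $\rho(y)\ge\rho(x)/2$ and, by \eqref{A4}, $b(\rho(y))\le b(\rho(x)/2)\le C_{1}b(\rho(x))$. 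If $b$ is decreasing and $\rho(x)<\tilde c$, then the radius of $B$ is bounded and $b\circ\rho$ stays between two positive constants on $B$, which again gives the inequality. Taking $c$ to be the largest of the three constants completes this step.

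\emph{Volume comparison.} Set $\Lambda:=c\,b(\rho(x))$ and $r:=k/b(\rho(x))$, so $\Lambda r=ck$ depends only on $C$ and $k$. By the auxiliary comparison and \eqref{curv_assump} (valid on $4\Omega\supset B$), all sectional curvatures of $M$ on $B$ are $\ge-\Lambda^{2}$, hence $\ric\ge-(n-1)\Lambda^{2}$ on $B$; since every minimizing geodesic from $x$ to a point of $B$ stays in $B$, the Bishop comparison theorem gives
\[
m_{M}(B)\le\omega_{n-1}\int_{0}^{r}\Bigl(\frac{\sinh(\Lambda t)}{\Lambda}\Bigr)^{n-1}dt=\frac{\omega_{n-1}}{\Lambda^{n}}\int_{0}^{ck}(\sinh s)^{n-1}\,ds,
\]
where $\omega_{n-1}$ is the volume of the unit $(n-1)$-sphere in $\R^{n}$. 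Since $\Lambda^{n}=c^{n}b(\rho(x))^{n}$ and the remaining integral depends only on $n$, $C$ and $k$, this is exactly $m_{M}(B)\le c_{2}/b(\rho(x))^{n}$ with $c_{2}=c_{2}(C,k)$.

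The main obstacle is the auxiliary comparison, i.e.\ ruling out that $B$ reaches radii where $b\circ\rho$ behaves very differently from $b(\rho(x))$ (toward $o$ when $b$ decreases, away from $o$ when $b$ increases); this is precisely where \eqref{A3}, \eqref{A4} and \eqref{B1} enter, and one should fix all thresholds before invoking Lemma~\ref{vah_apu} to avoid circularity. The rest is the standard volume comparison argument, as in \cite{HoVa}.
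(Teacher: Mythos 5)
Your proof is correct and follows the same route as the paper, which (after establishing Lemma~\ref{vah_apu}) defers to \cite{HoVa}: control $b\circ\rho$ by $b(\rho(x))$ on the ball, then apply Bishop volume comparison with the constant lower curvature bound $-\Lambda^2$, where $\Lambda r$ depends only on $C$ and $k$. Note only that your auxiliary comparison need not be re-derived from scratch: for $y\in B\bigl(x,k/b(\rho(x))\bigr)$ one has $b(\rho(x))\,d(x,y)< k$, so Lemma~\ref{vah_apu} applied with the roles of $x$ and $y$ interchanged immediately gives $b(\rho(y))\le c_{1,k}\,b(\rho(x))$.
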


\begin{lemma}\cite[Lemma 3.12]{HoVa}\label{tama_ok}
If $\varphi\in C(M)$, then the function 
$f:M\times M\to\R$, 
\[f(x,y)=\chi\bigl(b(\rho(y))d(x,y)\bigr)\varphi(y),
  \]
satisfies the assumptions of Lemma \ref{jarj_vaihto}.
\end{lemma}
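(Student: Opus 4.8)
The plan is to check, one at a time, the three hypotheses of Lemma~\ref{jarj_vaihto} for $f(x,y)=\chi\bigl(b(\rho(y))d(x,y)\bigr)\varphi(y)$, taking $N=M$. I would first record two structural facts. The first is that $f$ is locally constant in the $x$-variable near the diagonal $\Delta=\{(x,x):x\in M\}$: the map $(x,y)\mapsto b(\rho(y))d(x,y)$ is continuous on $M\times M$ and vanishes on $\Delta$, and $\chi\equiv1$ on $[-1,1]$, so each diagonal point $(x_0,x_0)$ has a product neighbourhood $U\times V$ on which $b(\rho(y))d(x,y)<1$ and hence $f(x,y)=\varphi(y)$. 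The second is the standard fact that on a Cartan--Hadamard manifold $(x,y)\mapsto\exp_x^{-1}(y)$ is smooth on all of $M\times M$, so $(x,y)\mapsto d(x,y)$ is smooth on $\{x\ne y\}$ and continuous everywhere.

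Granting these, hypothesis (i) — $f(\cdot,y)\in C^\infty(M)$ for each fixed $y$ — is immediate: $f(\cdot,y)$ is a composition of smooth maps on $M\setminus\{y\}$, and it is locally constant, hence smooth, near $y$. For hypothesis (ii) I would show that $(x,y)\mapsto X_m\bigl(\dotsm X_1(f(\cdot,y))\dotsm\bigr)(x)$ is jointly continuous: for $m=0$ this is the joint continuity of $f$ itself; for $m\ge1$ the iterated derivative vanishes identically on the open neighbourhood of $\Delta$ described above, while on $\{x\ne y\}$ the chain rule expands it into a finite sum of terms $\chi^{(j)}\bigl(b(\rho(y))d(x,y)\bigr)\,b(\rho(y))^{j}\,\varphi(y)$ times products of iterated derivatives $X_{i_1}\dotsm X_{i_k}\bigl(d(\cdot,y)\bigr)(x)$, each factor being jointly continuous on $\{x\ne y\}$. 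Patching these two descriptions gives continuity on all of $M\times M$.

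For hypothesis (iii) I would take $V=\overline B(x_0,1)$ and use that $f(x,\cdot)$ is supported where $b(\rho(y))d(x,y)\le 2$, i.e. $d(x,y)\le 2/b(\rho(y))$. If $b$ is increasing then $b(\rho(y))\ge b(0)>0$, so $d(x,y)\le 2/b(0)$ and $\rho(y)\le\rho(x_0)+1+2/b(0)$. If $b$ is decreasing then \eqref{B1} gives $b(\rho(y))\ge(\log\rho(y))^{\tilde\varepsilon}/\rho(y)\ge 4/\rho(y)$ once $\rho(y)$ is large enough, so $d(x,y)\le\rho(y)/2$ and $\rho(y)\le\rho(x)+d(x,y)\le\rho(x_0)+1+\rho(y)/2$, i.e. $\rho(y)\le 2(\rho(x_0)+1)$; for $\rho(y)$ in the remaining bounded range there is nothing to prove. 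In both cases $\bigcup_{x\in V}\supp f(x,\cdot)\subset\overline B(o,R)$ for some $R=R(x_0)$, which is bounded.

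The computations are routine except for two points. The genuine subtlety is the behaviour of $f$ across the diagonal, where $d(\cdot,y)$ is not smooth; it is handled once and for all by the observation that $\chi\equiv1$ on $[-1,1]$ makes $f$ locally constant in $x$ there, which settles (i) and the $m\ge1$ case of (ii) simultaneously. The only step that really uses a hypothesis on $b$ is (iii): the lower bound \eqref{B1} (in the decreasing case) is precisely what keeps the support window $d(x,y)\lesssim 1/b(\rho(y))$ from being unbounded, forcing $\rho(y)$ to stay comparable to $\rho(x)$ — the same mechanism as in the proof of Lemma~\ref{vah_apu}.
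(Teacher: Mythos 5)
Your proof is correct and follows the same route the paper (via its citation of \cite{HoVa}) relies on: the key observation that $\chi\equiv 1$ near $0$ makes $f$ locally constant in $x$ in a neighbourhood of the diagonal, disposing of both the smoothness at $x=y$ and the continuity of all higher $x$-derivatives there, while on $\{x\neq y\}$ everything is smooth and the chain rule gives joint continuity. The only small deviation is in hypothesis (iii): the paper phrases its argument as ``exactly as in \cite{HoVa} after establishing Lemma~\ref{vah_apu}'' --- that is, the support bound $d(x,y)\le 2/b(\rho(y))\le 2c_{1,2}/b(\rho(x))$ is pulled from Lemma~\ref{vah_apu} --- whereas you re-derive the needed localisation directly from the monotonicity of $b$ together with \eqref{B1}; both are equivalent, and you explicitly note it is ``the same mechanism.''
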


Let $\varphi\in C(M)$ and $f$ be as in Lemma \ref{tama_ok} and define
\[\cal R(\varphi)(x)=\int_Mf(x,y)\,dm_M(y).
  \]
Since $\cal R(1)>0$, we can also define $\cal P(\varphi):M\to\R$ by 
\[\cal P(\varphi)=\frac{\cal R(\varphi)}{\cal R(1)}.
  \]
Then $\cal P$ is linear: if $\lambda_1,\lambda_2\in\R$
and $\varphi_1,\varphi_2\in C(M)$, then
\[\cal P(\lambda_1\varphi_1+\lambda_2\varphi_2)=\lambda_1\cal P(\varphi_1)+\lambda_2\cal P(\varphi_2).
  \]
Also, if $k:M\to\R$ is a constant function, then
\[\cal P(k)=k.
  \]
  
The proof of the following lemma under current curvature assumptions requires only a minor change from that in \cite{HoVa}. In fact, we need only a counterpart for the estimate \cite[(3.10)]{HoVa}. 
\begin{lemma}\cite[Lemma 3.13]{HoVa}\label{jatko_lause}
Suppose that $\varphi\in C(\overline M)$.
Extend the function $\cal P(\varphi):M\to\R$ to a function $\overline M\to\R$
by setting
\[\cal P(\varphi)(\bar x)=\varphi(\bar x)
  \]
whenever $\bar x\in \partial_\infty M$.
Then the extended function satisfies $\cal P(\varphi)\in C^\infty(M)\cap C(\overline M)$.
\end{lemma}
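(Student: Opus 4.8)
The plan is to prove the two assertions separately, the smoothness on $M$ being essentially automatic and the continuity up to $\partial_\infty M$ being the real content. For the first part: by Lemma~\ref{tama_ok} the function $f(x,y)=\chi\bigl(b(\rho(y))d(x,y)\bigr)\varphi(y)$ satisfies the hypotheses of Lemma~\ref{jarj_vaihto}, so $\cal R(\varphi)\in C^\infty(M)$; applying this with $\varphi\equiv1$ gives $\cal R(1)\in C^\infty(M)$, and since $\cal R(1)>0$ we obtain $\cal P(\varphi)=\cal R(\varphi)/\cal R(1)\in C^\infty(M)$, hence continuity at every interior point. For the second part, the guiding observation is that $\cal P(\varphi)(x)$ is a \emph{weighted average} of the values of $\varphi$ over the mollification support $S_x:=\set{y\in M}{\chi(b(\rho(y))d(x,y))\ne0}$, with non-negative weights of total mass $\cal R(1)(x)>0$; hence $\inf_{S_x}\varphi\le\cal P(\varphi)(x)\le\sup_{S_x}\varphi$. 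By Lemma~\ref{vah_apu} with $k=2$, writing $c=c_{1,2}$, every $y\in S_x$ satisfies $b(\rho(x))\le c\,b(\rho(y))$, and therefore $d(x,y)\le 2/b(\rho(y))\le 2c/b(\rho(x))$, i.e. $S_x\sub B\bigl(x,2c/b(\rho(x))\bigr)$.

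Everything then reduces to showing that, for each $x_\infty\in\partial_\infty M$, the sets $S_x$ shrink to $x_\infty$ in the cone topology as $x\to x_\infty$; this is exactly the point at which the proof of \cite[Lemma 3.13]{HoVa} invokes \cite[(3.10)]{HoVa}, and I would replace that estimate by the following one, valid under \eqref{A1}--\eqref{A4}. For $y\in S_x$ with $\rho(x)$ large I would show: (i) $\rho(y)\ge\tfrac23\rho(x)$; and (ii) $\ang_o(x,y)\le 2c/\bigl(b(\rho(x))\,f_a(\tfrac23\rho(x))\bigr)$. For (i): if $b$ is increasing then $d(x,y)\le2/b(0)$ is uniformly bounded; if $b$ is decreasing, the computation in the proof of Lemma~\ref{vah_apu} shows that once $\rho(x)$ is large enough that no $y\in S_x$ can have small $\rho(y)$, one has $d(x,y)\le\rho(x)/3$; either way $\rho(y)\ge\tfrac23\rho(x)$, and in particular $\rho(y)\to\infty$. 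For (ii): apply Lemma~\ref{kulma_arvio} with base point $x$ to $\theta(z)=\ang_o(x,z)$ and integrate $|\del\theta|\le1/(f_a\circ\rho)$ along the minimizing geodesic from $x$ to $y$, on which $\rho\ge\rho(x)-d(x,y)\ge\tfrac23\rho(x)$ by (i); since $f_a$ is increasing this gives $\ang_o(x,y)=|\theta(y)-\theta(x)|\le\int_{[x,y]}|\del\theta|\,ds\le d(x,y)/f_a(\tfrac23\rho(x))\le 2c/\bigl(b(\rho(x))f_a(\tfrac23\rho(x))\bigr)$.

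Finally, I would check that the right-hand side of (ii) tends to $0$: since $f_a$ is convex with $f_a(0)=0$ and $f_a'(0)=1$ we have $f_a(t)\ge t$, so \eqref{A1} forces $\int^\infty a^2 f_a\ge\int^\infty\tfrac{1+\varepsilon}{t\log t}\,dt=\infty$, whence $f_a'(t)=1+\int_0^t a^2 f_a\to\infty$ and thus $f_a(t)/t\to\infty$; combined with $b(t)\ge(\log t)^{\tilde\varepsilon}/t\ge1/t$ from \eqref{B1} this yields $b(\rho(x))f_a(\tfrac23\rho(x))\to\infty$, so $\ang_o(x,y)\to0$ uniformly over $y\in S_x$. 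With (i) and (ii) the conclusion is routine: given $\eta>0$, continuity of $\varphi$ on $\overline M$ furnishes a cone neighbourhood $V$ of $x_\infty$ on which $|\varphi-\varphi(x_\infty)|<\eta$; since $\ang_o(x_\infty,y)\le\ang_o(x_\infty,x)+\ang_o(x,y)\to0$ and $\rho(y)\to\infty$ for $y\in S_x$ as $x\to x_\infty$, we have $S_x\sub V$ once $x$ is close enough to $x_\infty$, hence $|\cal P(\varphi)(x)-\varphi(x_\infty)|\le\eta$; together with $\cal P(\varphi)(\bar x)=\varphi(\bar x)\to\varphi(x_\infty)$ for boundary points $\bar x\to x_\infty$, this gives continuity of $\cal P(\varphi)$ at $x_\infty$, so $\cal P(\varphi)\in C(\overline M)$. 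I expect step (ii) to be the main obstacle: one must verify that, without condition (A5) of \cite{HoVa} on $b$, the weaker bounds \eqref{A1} and \eqref{B1} still make the balls $B(x,2c/b(\rho(x)))$ cone-topology small, and that the geodesics used in (ii) stay inside the region of $M$ on which the curvature bound $K_M\le-(a\circ\rho)^2$ has been assumed.
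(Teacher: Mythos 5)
Your overall strategy is the right one and matches the paper's: reduce to showing that the mollification supports $S_x\subset B\bigl(x,2c_{1,2}/b(\rho(x))\bigr)$ shrink to $\bar x$ in the cone topology as $x\to\bar x$, and then invoke continuity of $\varphi$ on $\overline M$. Moreover, the key new estimate $d(x,y)\le 2c_{1,2}/b(\rho(x))$ from Lemma~\ref{vah_apu}, together with \eqref{B1}, is exactly what the paper establishes (it is the displayed inequality \eqref{qq1}, the replacement for \cite[(3.10)]{HoVa}). Your smoothness argument and the weighted-average observation are correct.

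Where you diverge from the paper---and where you correctly sense trouble---is in your step (ii): you bound $\ang_o(x,y)$ by invoking Lemma~\ref{kulma_arvio} with the \emph{actual} upper curvature function $a$, giving $\ang_o(x,y)\le 2c_{1,2}/\bigl(b(\rho(x))f_a(\tfrac23\rho(x))\bigr)$, and then you need the auxiliary fact $f_a(t)/t\to\infty$ to conclude. That auxiliary fact is proved correctly (it follows from \eqref{A1} and $f_a\ge t$). However, using the $f_a$-comparison requires the bound $K_M(P)\le -a^2(\rho)$ to hold along the radial geodesics $\gamma^{o,z}$ for every $z$ on the segment $[x,y]$, and in Section~\ref{sec:constext} that bound is only assumed on the cone $4\Omega$; the lemma, by contrast, is asserted for \emph{every} $\bar x\in\partial_\infty M$ and arbitrary $\varphi\in C(\overline M)$. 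You flag this at the end, and it is a genuine gap in the proof as written.

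The paper sidesteps this cleanly by exploiting the arbitrariness of $\delta$ in \eqref{B1}. From $d(x,y)\le 2c_{1,2}/b(\rho(x))\le 2c_{1,2}\,\rho(x)/(\log\rho(x))^{\tilde\varepsilon}$ one gets $d(x,y)\le\tfrac{\delta}{3}\rho(x)$ for any prescribed $\delta>0$ once $\rho(x)$ is large, i.e.\ $d(x,y)/\rho(x)\to 0$. Then Lemma~\ref{kulma_arvio} applied with $a\equiv0$ (the flat comparison, valid globally since $K_M\le 0$) gives
\[
\ang_o(x,y)\le\frac{d(x,y)}{\rho(x)-d(x,y)}\le\frac{\delta/3}{1-\delta/3},
\]
which is $<\delta$ for small $\delta$; no upper curvature bound beyond $K_M\le 0$ is needed, and no extra lemma about $f_a$ is required. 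If you replace your step (ii) with this flat comparison---which you can do, since you already hold the estimate $d(x,y)/\rho(x)\to 0$---your proof coincides with the paper's and the flagged concern disappears.
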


\begin{proof}
It is enough to show continuity at infinity since $\cal P(\varphi)\in C^\infty(M)$ by Lemma \ref{jarj_vaihto}.
Fix $\bar x\in \partial_\infty M$ and $\epsilon>0$.
Since $\varphi$ is continuous at $\bar x$, there exist $\delta\in(0,1)$ and $R>0$ such that
$|\varphi(x)-\varphi(\bar x)|<\epsilon$ for every $x\in T(\dot\gamma{}_0^{o,\bar x},\delta,R)$.
\par
Choose $R'>\max(3R/2,T_1)$ such that 
\[\frac{2c_{1,2}t}{(\log t)^{\tilde{\varepsilon}}}\le\frac{\delta}{3}t
  \]
for all $t\ge R'$, where the constant $c_{1,2}$ is given by Lemma~\ref{vah_apu}. 
Let $x\in M\pois B(o,R')$ and let $y\in M$ be such that $b(\rho(y))d(x,y)\le 2$. 
Lemma \ref{vah_apu} and \eqref{B1} imply
\begin{equation}\label{qq1}
  d(x,y)\le\frac{2}{b(\rho(y))}
  \le\frac{2c_{1,2}}{b(\rho(x))}
  \le\frac{\delta}{3}\rho(x).
  \end{equation}
This is the estimate \cite[(3.10)]{HoVa} and after this the original proof applies verbatim.  
\end{proof}

Again, after establishing Lemma~\ref{vah_apu}, the original proof of the following lemma in \cite{HoVa} applies.
\begin{lemma}\cite[Lemma 3.14]{HoVa}\label{yht_arvio}
Let $\varphi\in C(M)$.
Let $x\in M$ be such that \\
$B(x,2c_{1,2}/b(\rho(x)))\subset 4\Omega$ and let $X\in S_xM$.
Then
\begin{equation}\label{arvio_r01}
  |\cal R(\varphi)(x)|\le c_3 b(\rho(x))^{-n}\sup_{y\in B(x,2c_{1,2}/b(\rho(x)))}|\varphi(y)|,
  \end{equation}
\begin{equation}\label{arvio_r1}
  \bigl|X\bigl(\cal R(\varphi)\bigr)\bigr|\le c_3 b(\rho(x))^{1-n}\sup_{y\in B(x,2c_{1,2}/b(\rho(x)))}|\varphi(y)|,
  \end{equation}
and
\begin{equation}\label{arvio_r2}
  \bigl|D^2\bigl(\cal R(\varphi)\bigr)(X,X)\bigr|\le c_3 b(\rho(x))^{2-n}\sup_{y\in B(x,2c_{1,2}/b(\rho(x)))}|\varphi(y)|.
  \end{equation}
Also,
\begin{equation}\label{arvio_r02}
  \cal R(1)(x)\ge c_3^{-1} b(\rho(x))^{-n}.
  \end{equation}
Here $c_3=c_3(C)$ is a constant.
\end{lemma}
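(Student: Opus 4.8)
The plan is to reduce all three estimates \eqref{arvio_r01}--\eqref{arvio_r2}, together with the lower bound \eqref{arvio_r02}, to the differentiation formulas of Lemma~\ref{jarj_vaihto} applied to the kernel $f(x,y)=\chi\bigl(b(\rho(y))d(x,y)\bigr)\varphi(y)$ from Lemma~\ref{tama_ok}, combined with the volume control of Lemma~\ref{vol_sopii} and the ``$b\circ\rho$ is essentially constant'' statement of Lemma~\ref{vah_apu}. First I would observe that, since $\supp\chi\subset[-2,2]$, the integrand of $\cal R(\varphi)(x)=\int_M f(x,y)\,dm_M(y)$ is supported in the set $E_x=\{y\in M: b(\rho(y))d(x,y)\le 2\}$. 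Applying Lemma~\ref{vah_apu} with $k=2$ gives $b(\rho(y))\ge c_{1,2}^{-1}b(\rho(x))$ for $y\in E_x$, hence $d(x,y)\le 2/b(\rho(y))\le 2c_{1,2}/b(\rho(x))$; thus $E_x\subset B\bigl(x,2c_{1,2}/b(\rho(x))\bigr)$, which by hypothesis is contained in $4\Omega$. Since $|\chi|\le 1$, the bound $|f(x,y)|\le \sup_{y\in B(x,2c_{1,2}/b(\rho(x)))}|\varphi(y)|$ holds, and integrating over $E_x$ and invoking Lemma~\ref{vol_sopii} (with $k=2c_{1,2}$) to bound $m_M\bigl(B(x,2c_{1,2}/b(\rho(x)))\bigr)\le c_2 b(\rho(x))^{-n}$ yields \eqref{arvio_r01}.

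For \eqref{arvio_r1} and \eqref{arvio_r2} I would use \eqref{der1} and \eqref{der2} of Lemma~\ref{jarj_vaihto} to move the derivatives inside the integral:
\[
X\bigl(\cal R(\varphi)\bigr)(x)=\int_M X_x\bigl(f(\cdot,y)\bigr)(x)\,dm_M(y),\qquad
D^2\bigl(\cal R(\varphi)\bigr)(X,X)(x)=\int_M D^2\bigl(f(\cdot,y)\bigr)(X,X)(x)\,dm_M(y).
\]
Since $\varphi(y)$ does not depend on $x$, differentiating $f(\cdot,y)$ amounts to differentiating $x\mapsto\chi\bigl(b(\rho(y))d(x,y)\bigr)$. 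By the chain rule the first derivative produces a factor $\chi'\cdot b(\rho(y))\cdot X\bigl(d(\cdot,y)\bigr)$, and the second derivative produces terms of the shape $\chi''\cdot b(\rho(y))^2\cdot\bigl(X d(\cdot,y)\bigr)^2$ and $\chi'\cdot b(\rho(y))\cdot D^2 d(\cdot,y)(X,X)$. The gradient of the distance function has unit norm, so $|X d(\cdot,y)|\le 1$; the Hessian $D^2 d(\cdot,y)(X,X)$ is controlled by the curvature comparison theorem in $4\Omega$, where $K_M\ge -(b\circ\rho)^2$, giving a bound of order $b(\rho(y))$ up to the usual $\coth$-type factor that is harmless because $d(x,y)\le 2/b(\rho(y))$ stays bounded on $E_x$ (one can also absorb the reciprocal-distance singularity of the Hessian against the vanishing of $\chi'$ near $0$, exactly as in \cite{HoVa}). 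On $E_x$ we replace every occurrence of $b(\rho(y))$ by $c_{1,2}b(\rho(x))$ using Lemma~\ref{vah_apu}, so the integrand of $X\bigl(\cal R(\varphi)\bigr)$ is bounded by $C\,b(\rho(x))\sup_{B}|\varphi|$ and that of $D^2\bigl(\cal R(\varphi)\bigr)(X,X)$ by $C\,b(\rho(x))^2\sup_{B}|\varphi|$; multiplying by the volume bound $c_2 b(\rho(x))^{-n}$ from Lemma~\ref{vol_sopii} gives \eqref{arvio_r1} and \eqref{arvio_r2}.

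Finally, for the lower bound \eqref{arvio_r02} I would take $\varphi\equiv 1$ and note that $\chi|[-1,1]\equiv 1$, so the integrand equals $1$ on the smaller ball $B\bigl(x, 1/b(\rho(x))\bigr)$: indeed if $d(x,y)\le 1/b(\rho(x))$ then, since $\rho(y)\le\rho(x)+1/b(\rho(x))$ and $b\circ\rho$ is comparable on this ball by Lemma~\ref{vah_apu}, we get $b(\rho(y))d(x,y)\le 1$ after adjusting the radius by the fixed factor $c_{1,1}$; thus $\cal R(1)(x)\ge m_M\bigl(B(x,c_{1,1}^{-1}/b(\rho(x)))\bigr)$, which is bounded below by a constant times $b(\rho(x))^{-n}$ by the Bishop--Gromov volume comparison from below (using $K_M\le -(a\circ\rho)^2\le 0$ in $4\Omega$, so Euclidean volume growth is a lower bound at these scales). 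Collecting constants into $c_3=c_3(C)$ completes the proof. The only point that needs genuine care, rather than bookkeeping, is controlling the Hessian term $D^2 d(\cdot,y)(X,X)$ near the cut of $\chi'$ so that the $1/d(x,y)$ behaviour of the distance Hessian does not spoil the $b(\rho(x))^{2-n}$ scaling; this is handled exactly as in \cite[Lemma 3.14]{HoVa} once Lemma~\ref{vah_apu} is available, which is why the authors only claim the original proof applies.
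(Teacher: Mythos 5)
Your reconstruction is correct and follows exactly the approach of the cited proof in \cite{HoVa}: localize the support of the kernel to $E_x=\{y:b(\rho(y))d(x,y)\le 2\}\subset B(x,2c_{1,2}/b(\rho(x)))$ using Lemma~\ref{vah_apu}, differentiate under the integral sign via Lemma~\ref{jarj_vaihto}, use the chain rule together with $|\nabla d(\cdot,y)|=1$ and the Hessian comparison $D^2d(\cdot,y)(X,X)\le b\coth(bd)$ (the $\coth$ factor being harmless because $\chi'$, $\chi''$ vanish for $b(\rho(y))d(x,y)<1$), replace every $b(\rho(y))$ by $c_{1,2}b(\rho(x))$, and multiply by the volume bound of Lemma~\ref{vol_sopii}; the lower bound \eqref{arvio_r02} follows from $\chi|[-1,1]\equiv 1$ together with the G\"unther--Bishop lower volume estimate for $K_M\le 0$. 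Your observation that the distance-Hessian singularity is absorbed by the cutoff and that the lower volume bound requires a separate comparison argument (not Lemma~\ref{vol_sopii}, which only gives the upper bound) are exactly the two points that need care, and you handled both correctly.
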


The following lemma is a counterpart of \cite[Lemma 3.16]{HoVa}.
\begin{lemma}\label{uus_l}
For every $t_0>0$ and $0<\lambda<1$ there exists a constant $R_1=R_1(C,t_0,\lambda)$ 
such that the following hold.
\par
(a) 
If $x\in 3\Omega\pois B(o,R_1)$ and $y\in B(x,2c_{1,2}/b(\rho(x)))$, then
\begin{equation}\label{turha_viittaus}
  \ang_o(x,y)\le 
  \begin{cases}
  \dfrac{c_{1,2}}{b\left(\rho(x)\right)f_a\left(\lambda\rho(x)\right)}; &\\&\\
\dfrac{c_{1,2}}{b\left(\rho(x)\right)f_a\left(\rho(x)-t_0\right)} &\text{if, in addition, $b$ is increasing},\\ 
	 \end{cases}
  \end{equation}
and
\[y\in 4\Omega\pois B(o,1).
  \]
\par
(b) If $x\in M\pois\bigl(2\Omega\cup B(o,R_1)\bigr)$, then
$B(x,2c_{1,2}/b(\rho(x)))\subset M\pois\bigl(\Omega\cup B(o,1)\bigr)$.
\end{lemma}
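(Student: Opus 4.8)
The plan is to estimate $\ang_o(x,y)$ in both parts by applying Lemma~\ref{kulma_arvio} to the ``angular'' function $\psi(z)=\ang_o(x,z)$, defined on $M\setminus\gamma^{o,x}(\R)$ and based at the far point $x$, and then integrating $|\del\psi|$ along the minimizing geodesic $\sigma\colon[0,\ell]\to M$, $\ell=d(x,y)$, from $x$ to $y$. Two observations drive the argument. First, since $M$ is Cartan-Hadamard, Lemma~\ref{kulma_arvio} applied with $a\equiv 0$ (so $f_a(t)=t$) gives the curvature-free bound $|\del\psi|\le 1/\rho$ wherever $\psi$ is defined, hence $\ang_o(x,z)\le d(x,z)/(\rho(x)-d(x,z))$ for every $z$ with $d(x,z)<\rho(x)$; moreover distinct geodesics meet at most once, so $\sigma$ meets $\gamma^{o,x}(\R)$ only at $x$ (if instead $y\in\gamma^{o,x}(\R)$, then $\ang_o(x,y)\in\{0,\pi\}$, and $\ang_o(x,y)=\pi$ is impossible as it would force $d(x,y)=\rho(x)+\rho(y)\ge R_1$). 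Second, by \eqref{B1} the ball radius $r:=2c_{1,2}/b(\rho(x))$ obeys $r\le 2c_{1,2}\rho(x)/(\log\rho(x))^{\tilde{\varepsilon}}$, so $r/\rho(x)\to 0$ as $\rho(x)\to\infty$; this is the only place \eqref{B1} is used, and it is what makes the whole lemma work.

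For part~(b), let $x\in M\setminus(2\Omega\cup B(o,R_1))$ and $y\in B(x,r)$. Choosing $R_1$ large enough (in terms of the data $C$ and of the fixed constant $L$), the curvature-free bound gives $\ang_o(x,y)\le r/(\rho(x)-r)<1/L$, while $\rho(y)\ge\rho(x)-r\ge\rho(x)/2>1$. Since $\ang_o(v_0,\dot\gamma^{o,x}_0)\ge 2/L$ and the angle at $o$ is a distance on $S_oM$, the (reverse) triangle inequality gives $\ang_o(v_0,\dot\gamma^{o,y}_0)\ge 2/L-1/L=1/L$, i.e.\ $y\notin\Omega$; together with $\rho(y)>1$ this yields $B(x,r)\subset M\setminus(\Omega\cup B(o,1))$.

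For part~(a), let $x\in 3\Omega\setminus B(o,R_1)$ and $y\in B(x,r)$. Applying the curvature-free bound along $\sigma$, for $R_1$ large we get $\ang_o(x,\sigma(s))\le s/(\rho(x)-s)\le r/(\rho(x)-r)<1/L$ for all $s$, hence $\ang_o(v_0,\dot\gamma^{o,\sigma(s)}_0)<3/L+1/L=4/L$ and $\rho(\sigma(s))\ge\rho(x)-r>1$; thus $\sigma\subset 4\Omega$, and in particular $y\in 4\Omega\setminus B(o,1)$, the first assertion of~(a). Now $\sigma$, and the radial rays $\gamma^{o,\sigma(s)}$ through its points, run within $4\Omega$, so the hypothesis $K_M\le-(a\circ\rho)^2$ holds along $\sigma$; Lemma~\ref{kulma_arvio} then gives $|\del\psi(\sigma(s))|\le 1/f_a(\rho(\sigma(s)))$, and integrating (from $s=0^+$, which is harmless as $\psi$ is continuous with $\psi(x)=0$ and smooth off $\gamma^{o,x}(\R)$),
\[
\ang_o(x,y)\le\int_0^\ell\frac{ds}{f_a(\rho(\sigma(s)))}\le\frac{\ell}{f_a(\rho(x)-\ell)}\le\frac{2c_{1,2}}{b(\rho(x))\,f_a\bigl(\rho(x)-r\bigr)}.
\]
Finally one bounds $\rho(x)-r$ below: by \eqref{B1}, $r\le 2c_{1,2}\rho(x)/(\log\rho(x))^{\tilde{\varepsilon}}$, so for $R_1=R_1(C,\lambda)$ large, $\rho(x)-r\ge\lambda\rho(x)$, giving the first bound in~\eqref{turha_viittaus}; if in addition $b$ is increasing, one takes $R_1=R_1(C,t_0)$ large enough that $r\le t_0$ and uses $\rho(x)-r\ge\rho(x)-t_0$ to get the second bound. (The exact numerical constant $c_{1,2}$ in~\eqref{turha_viittaus} is recovered, as in the proof of \cite[Lemma 3.16]{HoVa}, after an appropriate choice of $R_1$; alternatively one may sharpen the pointwise estimate to $|\del\psi(\sigma(s))|\le\sqrt{1-((\rho\circ\sigma)'(s))^2}\,/f_a(\rho(\sigma(s)))$, using $\del\psi\perp\del\rho$, together with the convexity of $\rho\circ\sigma$.)

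The main obstacle is part~(a): one must first confine the geodesic $\sigma$ — and then the radial rays through its points — to the cone $4\Omega$ where the upper curvature bound is in force, via a bootstrapping that begins from the curvature-free estimate, and only afterwards can one invoke Lemma~\ref{kulma_arvio} with the true $a$; extracting the precise dependence on $\lambda$ (resp.\ $t_0$) from the resulting integral is then a matter of using \eqref{B1} to make the ball radius negligible relative to $\rho(x)$.
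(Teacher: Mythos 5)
Your proof follows essentially the same approach as the paper's: both first apply Lemma~\ref{kulma_arvio} with $a\equiv 0$ together with condition \eqref{B1} to confine the ball $B\bigl(x,2c_{1,2}/b(\rho(x))\bigr)$ to the appropriate cone region (yielding part~(b) and the containment $y\in4\Omega\setminus B(o,1)$ in part~(a)), and then integrate the sharpened bound $1/f_a(\rho(\cdot))$ from Lemma~\ref{kulma_arvio} along the geodesic from $x$ to $y$, using \eqref{B1} once more to replace $\rho(x)-r$ by $\lambda\rho(x)$ or $\rho(x)-t_0$. The only substantive difference is that you spell out the bootstrap verifying the curvature hypothesis of Lemma~\ref{kulma_arvio} along the segment and the radial rays through it — a point the paper leaves implicit — but the resulting estimates coincide.
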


\begin{proof}
Suppose that $x\in M$ and $y\in B(x,2c_{1,2}/b(\rho(x)))$.
Choose $R_1'>\max(2,T_1)$ such that 
\[\frac{2c_{1,2}t}{(\log t)^{\tilde{\varepsilon}}}\le\frac{t}{2L}
  \]
for all $t\ge R_1'$.
Suppose that $\rho(x)\ge R_1'$.
Then as in the proof of Lemma~\ref{jatko_lause}, we have
\[
  d(x,y)\le\frac{2c_{1,2}}{b(\rho(x))}
  \le\frac{\rho(x)}{2L}
  \]
and thus $\rho(y)\ge\rho(x)-d(x,y)\ge \rho(x)/2>1$.
Also, Lemma \ref{kulma_arvio} (applied with $a\equiv0$) gives
\[\ang_o(x,y)\le\frac{d(x,y)}{\rho(x)-d(x,y)}\le\frac{1/(2L)}{1-1/(2L)}<\frac{1}{L}.
  \]
From this we see that if $x\notin 2\Omega\cup B(o,R_1')$, then $y\notin \Omega\cup B(o,1)$.
This proves (b).
Also, if $x\in 3\Omega\pois B(o,R_1')$, then $y\in 4\Omega\pois B(o,1)$.
\par
Suppose now that $x\in 3\Omega$ and $\rho(x)\ge R_1'$.
We know by above that then $B(x,2c_{1,2}/b(\rho(x)))\subset 4\Omega\pois B(o,1)$.
We are left to verify the estimates \eqref{turha_viittaus} in (a). 
Fix $0<\lambda<1$ and let $R_1''\ge R_1'$ be so large that
\[\frac{2c_{1,2}t}{(\log t)^{\tilde{\varepsilon}}}\le (1-\lambda)t
  \]
for all $t\ge R_1''$. Then 
\[
d(x,z)\le\frac{2c_{1,2}}{b(\rho(x))}\le (1-\lambda)\rho(x)
\]
for all $z\in B(x,2c_{1,2}/b(\rho(x)))$ whenever $\rho(x)\ge R_1''$. So, for all $z\in B(x,2c_{1,2}/b(\rho(x)))$, we have 
$\rho(z)\ge \rho(x)-d(x,z)\ge\lambda\rho(x)$, in particular, this holds on the geodesic segment from $x$ to $y$. Hence, by integrating 
$|\nabla\ang_o(x,\cdot)|$ along this segment and using Lemma~\ref{kulma_arvio}, we get
\[\ang_o(x,y)\le\frac{d(x,y)}{f_a\bigl(\lambda \rho(x)\bigr)}
 \le\frac{2c_{1,2}}{b(\rho(x))f_a\bigl(\lambda \rho(x)\bigr)}.
  \]
 If $b$ is increasing, we have
\[
d(x,z)\le\frac{2c_{1,2}}{b(\rho(x))}\le t_0
\]
for all $z\in B(x,2c_{1,2}/b(\rho(x)))$ whenever $\rho(x)$ is large enough, say $\rho(x)\ge R_1\ge R_1''$. 
Hence $\rho(z)\ge \rho(x)-t_0$ for all $z\in B(x,2c_{1,2}/b(\rho(x)))$. As above we obtain
\[\ang_o(x,y)\le\frac{d(x,y)}{f_a\bigl(\rho(x)-t_0\bigr)}
 \le\frac{2c_{1,2}}{b(\rho(x))f_a\bigl(\rho(x)-t_0\bigr)}.
  \]
This shows \eqref{turha_viittaus} and ends the proof.
\end{proof}

We extend $h:\partial_\infty M\to\R$, defined by \eqref{eq:hoodef}, to a function 
$h:\overline M\to\R$ by setting
\[h(x)=\cal P(\tilde h)(x),\quad x\in M,
  \]
where $\tilde h$ is the function given by \eqref{eq:hoodeftilde}.
We know by Lemma \ref{jatko_lause} that $h\in C^\infty(M)\cap C(\overline M)$.
The following lemma gives the desired estimates for derivatives of $h$.
We refer to \cite{HoVa} for the proofs of these estimates that are based on Lemmas~\ref{yht_arvio} and \ref{uus_l}.
\begin{lemma}\cite[Lemma 3.16]{HoVa}\label{arvio_lause} (Main lemma)
The extended function $h\in C^\infty(M)\cap C(\overline M)$ 
satisfies 
\begin{equation}\label{arvio1}
  \begin{split}
  |\del h(x)|&\le \begin{cases}
    \dfrac{c_{4}}{f_a\left(\lambda\rho(x)\right)};\\ &\\
  \dfrac{c_{4}}{f_a\left(\rho(x)-t_0\right)} &\text{if, in addition, $b$ is increasing},
  \end{cases}  \\
  \|D^2h(x)\|&\le \begin{cases} 
   \dfrac{c_4 b\left(\rho(x)\right)}{f_a\left(\lambda\rho(x)\right)},\\ &\\
  \dfrac{c_4 b\left(\rho(x)\right)}{f_a\left(\rho(x)-t_0\right)} &\text{if, in addition, $b$ is increasing},
	\end{cases}\\
  \end{split}
  \end{equation}
for all $x\in 3\Omega\pois B(o,R_1)$. 
In addition,
\[h(x)=1
  \]
for every $x\in M\pois\bigl(2\Omega\cup B(o,R_1)\bigr)$. Here $R_1=R_1(C,t_0,\lambda)$ is the constant in Lemma \ref{uus_l}
and $c_4=c_4(C)$ is a constant.
\end{lemma}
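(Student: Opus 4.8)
The plan is to exploit that $\cal P$ is linear and fixes constants, which reduces the two derivative bounds to an estimate for the oscillation of $\tilde h$ over a small ball. Fix $x\in 3\Omega\pois B(o,R_1)$ and set $B_x:=B\bigl(x,2c_{1,2}/b(\rho(x))\bigr)$. By Lemma~\ref{uus_l}(a) one has $B_x\subset 4\Omega\pois B(o,1)$, so the curvature assumption \eqref{curv_assump} holds on $B_x$ and Lemma~\ref{yht_arvio} is applicable; moreover, Lemma~\ref{vah_apu} with $k=2$ shows that the $y$--support of the integrand $y\mapsto\chi\bigl(b(\rho(y))d(x,y)\bigr)\varphi(y)$ defining $\cal R(\varphi)(x)$ is contained in $\{y:b(\rho(y))d(x,y)\le 2\}\subset B_x$. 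I would then put $s:=\tilde h(x)$, $\psi:=\tilde h-s\in C(M)$, and use $\cal P(s)=s$ together with linearity to write $h=\cal P(\tilde h)=\cal P(\psi)+s$ on $M$, so that $\del h(x)=\del\cal P(\psi)(x)$ and $D^2h(x)=D^2\cal P(\psi)(x)$.

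Next I would differentiate the quotient $\cal P(\psi)=\cal R(\psi)/\cal R(1)$ once and twice and insert the bounds \eqref{arvio_r01}--\eqref{arvio_r02} of Lemma~\ref{yht_arvio}, with $\varphi=\psi$ for the numerator factors and $\varphi\equiv1$ for $\cal R(1)$ and its derivatives. The negative powers of $b(\rho(x))$ then cancel and, since $\sup_{B_x}|\psi|=\sup_{y\in B_x}|\tilde h(y)-\tilde h(x)|$, every term is controlled, giving
\[
|\del h(x)|\le c_5\,b(\rho(x))\sup_{y\in B_x}|\tilde h(y)-\tilde h(x)|,\qquad
\|D^2h(x)\|\le c_5\,b(\rho(x))^2\sup_{y\in B_x}|\tilde h(y)-\tilde h(x)|,
\]
with $c_5=c_5(C)$. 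To finish the derivative estimates I would bound the oscillation: since every $y\in B_x$ has $\rho(y)\ge1$ and also $\rho(x)\ge R_1>1$, the term $2-2\rho(\cdot)$ is nonpositive on $B_x$ and at $x$, so there $\tilde h$ coincides with the angular function $z\mapsto\min\bigl(1,L\theta(z)\bigr)$, where $\theta=\ang_o(x_0,\cdot)$ as in Lemma~\ref{kulma_arvio}. As $t\mapsto\min(1,Lt)$ is $L$--Lipschitz and the angular distance $\ang_o$ on $S_oM$ obeys the triangle inequality, $|\tilde h(y)-\tilde h(x)|\le L\,|\theta(y)-\theta(x)|\le L\,\ang_o(x,y)$ for $y\in B_x$, and then the angle estimate \eqref{turha_viittaus} of Lemma~\ref{uus_l}(a) yields $\sup_{y\in B_x}|\tilde h(y)-\tilde h(x)|\le Lc_{1,2}\bigl(b(\rho(x))f_a(\lambda\rho(x))\bigr)^{-1}$, with $f_a(\rho(x)-t_0)$ in place of $f_a(\lambda\rho(x))$ when $b$ is increasing. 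Substituting this into the two displayed inequalities, the factor $b(\rho(x))$ cancels completely in the gradient bound and all but one factor cancels in the Hessian bound, which is precisely \eqref{arvio1} with $c_4=c_4(C)$.

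For the final assertion I would argue as follows: if $x\in M\pois\bigl(2\Omega\cup B(o,R_1)\bigr)$ then $B_x\subset M\pois\bigl(\Omega\cup B(o,1)\bigr)$ by Lemma~\ref{uus_l}(b), so every $y\in B_x$ satisfies $\rho(y)\ge1$ and $L\ang(v_0,\dot\gamma^{o,y}_0)\ge1$, whence $\tilde h(y)=\min\bigl(1,\max(2-2\rho(y),L\ang(v_0,\dot\gamma^{o,y}_0))\bigr)=1$; since the $y$--support of the integrand of $\cal R(\cdot)(x)$ lies in $B_x$, this gives $\cal R(\tilde h)(x)=\cal R(1)(x)$, and hence $h(x)=\cal P(\tilde h)(x)=1$.

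The routine parts — differentiating $\cal R(\psi)/\cal R(1)$ and tracking the powers of $b(\rho(x))$ — follow \cite[Lemma~3.16]{HoVa} verbatim once Lemmas~\ref{yht_arvio} and \ref{uus_l} are in hand. The main point to get right, and the step that would break down if $x$ were permitted too close to $o$, is the identification of $\tilde h$ with the purely angular function $\min(1,L\theta)$ on $B_x$; this rests on $B_x\subset M\pois B(o,1)$, which is exactly the content of the angle/distance estimates of Lemma~\ref{uus_l}. Beyond careful constant-tracking I do not anticipate any serious obstacle.
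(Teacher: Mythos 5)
Your proposal is correct, and it reconstructs precisely the argument that the paper defers to (the proof in [HoVa, Lemma~3.16]): subtract the constant $\tilde h(x)$ using linearity of $\cal P$, apply the quotient-rule derivative estimates of Lemma~\ref{yht_arvio} to $\cal R(\psi)/\cal R(1)$, control the oscillation of $\tilde h$ over $B_x$ by the angle bound of Lemma~\ref{uus_l}(a), and deduce $h\equiv 1$ off $2\Omega\cup B(o,R_1)$ from Lemma~\ref{uus_l}(b) plus $\cal P(1)=1$. The only point worth flagging is cosmetic: the constant in \eqref{turha_viittaus} absorbs a factor of $2$ from the proof, but this is harmless since $c_4$ is unspecified.
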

Thanks to the previous lemma, we are in position to prove that the SC condition holds under our curvature assumptions.
\section{SC condition}\label{sec:sc}
The aim of this section is to give a proof of Theorem \ref{thmSC}. We consider two cases depending on the assumption made on the sectional curvature. 

\subsection{Proof of Theorem \ref{thmSC} under assumption (1).}
Throughout this subsection we assume that functions $a$ and $b$ satisfy the conditions \eqref{A1}-\eqref{A4} from Section~\ref{sec:constext}. 
Recall the following (Jacobi field) estimates:
\begin{proposition}\cite[Prop. 3.4]{choi}
\label{jacest}
Suppose that 
\[
a^2(t)\geq \frac{1+\varepsilon}{t^2\log t}, 
\]
for some $\varepsilon>0$ on $[R_0,\infty)$. Then, for any $0<\varepsilon_1<\varepsilon$, 
there exists $R_1\geq R_0$ such that, for all $t\geq R_1$,
\[
f_a(t)\geq t (\log t)^{1+\varepsilon_1},\quad 
\frac{f_a^\prime (t)}{f_a(t)}\geq \frac{1}{t}+\frac{1+\varepsilon_1}{t\log t}.
\]
\end{proposition}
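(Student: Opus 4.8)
The plan is to reduce the statement to a Riccati-type comparison for the logarithmic derivative $\phi:=f_a'/f_a$. By \eqref{Jacobi_eq} the function $f_a$ is positive on $(0,\infty)$ and increasing (indeed $f_a(t)\ge t$ by convexity), so $\phi$ is a nonnegative $C^\infty$ function on $(0,\infty)$ satisfying $\phi'=a^2-\phi^2$. Fix $\varepsilon_2$ with $\varepsilon_1<\varepsilon_2<\varepsilon$, put $g(t)=t(\log t)^{1+\varepsilon_2}$ and $\psi:=g'/g=\tfrac1t+\tfrac{1+\varepsilon_2}{t\log t}$. Since $\psi'+\psi^2=g''/g=\tfrac{1+\varepsilon_2}{t^2\log t}+\tfrac{(1+\varepsilon_2)\varepsilon_2}{t^2(\log t)^2}$, the hypothesis $a^2(t)\ge\tfrac{1+\varepsilon}{t^2\log t}$ on $[R_0,\infty)$ yields, for all $t$ larger than some $R_2\ge R_0$,
\[
a^2(t)-\psi'(t)-\psi^2(t)\ \ge\ \frac{\varepsilon-\varepsilon_2}{t^2\log t}-\frac{(1+\varepsilon_2)\varepsilon_2}{t^2(\log t)^2}\ \ge\ \frac{\varepsilon-\varepsilon_2}{2\,t^2\log t}\ >\ 0 .
\]

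Next I would set $w:=\phi-\psi$ on $[R_2,\infty)$ and use the identity
\[
w'+(\phi+\psi)w\ =\ (\phi'+\phi^2)-(\psi'+\psi^2)\ =\ a^2-\psi'-\psi^2\ \ge\ \frac{\varepsilon-\varepsilon_2}{2\,t^2\log t}.
\]
With the integrating factor $\mu(t):=\exp\!\big(\int_{R_2}^t(\phi+\psi)\,ds\big)>0$ this becomes $(\mu w)'\ge\mu(t)\,\tfrac{\varepsilon-\varepsilon_2}{2t^2\log t}>0$, so $\mu w$ is increasing on $[R_2,\infty)$. Since $\phi\ge0$ we have $\mu(t)\ge\exp\!\big(\int_{R_2}^t\psi\,ds\big)=g(t)/g(R_2)$, hence $(\mu w)'(t)\ge\tfrac{\varepsilon-\varepsilon_2}{2g(R_2)}\,\tfrac{(\log t)^{\varepsilon_2}}{t}$. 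Because $\int^\infty(\log t)^{\varepsilon_2}t^{-1}\,dt=\infty$, this forces $\mu(t)w(t)\to+\infty$; an increasing function tending to $+\infty$ is eventually positive, so $w>0$ on $[R_1,\infty)$ for some $R_1\ge R_2$. Thus $\tfrac{f_a'(t)}{f_a(t)}=\phi(t)>\psi(t)\ge\tfrac1t+\tfrac{1+\varepsilon_1}{t\log t}$ for $t\ge R_1$, which is the second inequality.

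For the first inequality I would integrate $f_a'/f_a\ge\psi$ over $[R_1,t]$, getting $f_a(t)\ge\big(f_a(R_1)/g(R_1)\big)g(t)=c\,t(\log t)^{1+\varepsilon_2}$ for $t\ge R_1$ with $c>0$. Since $\varepsilon_2>\varepsilon_1$ we have $c(\log t)^{\varepsilon_2-\varepsilon_1}\ge1$ for $t$ large, so after enlarging $R_1$ once more, $f_a(t)\ge t(\log t)^{1+\varepsilon_1}$, as claimed.

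The step I expect to be the main obstacle is the \emph{initialization} of the comparison: the Riccati inequality only lets one propagate an inequality $\phi\ge\psi$ forward in $t$, and there is no a priori reason that $\phi\ge\psi$ at any given point. This is exactly what the divergence of $\int^\infty(\log t)^{\varepsilon_2}t^{-1}\,dt$ settles in the second paragraph, by forcing $\mu w$ (hence $w$) to become and then stay positive. The slack $\varepsilon_1<\varepsilon_2<\varepsilon$ does two jobs: it absorbs the lower-order term $(1+\varepsilon_2)\varepsilon_2/(t^2(\log t)^2)$ against the margin in the curvature hypothesis, and it lets us swallow the multiplicative constant $c$ when passing from exponent $1+\varepsilon_2$ down to $1+\varepsilon_1$. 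The remaining ingredients — the two computations of $\psi'+\psi^2$ and $g''/g$, and the bound $f_a(t)\ge t$ — are routine.
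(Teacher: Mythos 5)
The paper states this proposition by citation to \cite[Prop.~3.4]{choi} and does not reproduce a proof, so there is no internal argument to compare against; I can only assess the proposal on its own terms. Your Riccati-comparison proof is correct. The computation $\psi'+\psi^2=g''/g=\tfrac{1+\varepsilon_2}{t^2\log t}+\tfrac{(1+\varepsilon_2)\varepsilon_2}{t^2(\log t)^2}$ is right, and the slack $\varepsilon_1<\varepsilon_2<\varepsilon$ does exactly the two jobs you describe: it absorbs the second-order error term for large $t$, and later it swallows the multiplicative constant when you pass from exponent $1+\varepsilon_2$ to $1+\varepsilon_1$. The linearization $w'+(\phi+\psi)w=(\phi'+\phi^2)-(\psi'+\psi^2)=a^2-g''/g$ is exact (using $\phi'+\phi^2=a^2$ from \eqref{Jacobi_eq}), so $(\mu w)'=\mu\bigl(a^2-g''/g\bigr)>0$. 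Your resolution of the initialization issue — using $\phi\ge 0$ to bound $\mu(t)\ge g(t)/g(R_2)$ and then invoking the divergence of $\int^\infty (\log t)^{\varepsilon_2}t^{-1}\,dt$ to force $\mu w\to+\infty$, hence $w>0$ eventually — is a clean and complete argument, and it is indeed the crux that a naive pointwise Riccati comparison cannot address. The final integration of $\phi\ge\psi$ giving $f_a(t)\ge c\,g(t)$ and the subsequent absorption of $c$ by $(\log t)^{\varepsilon_2-\varepsilon_1}\to\infty$ is also fine. This is the natural Sturm/Riccati comparison one expects Choi's proof to be built on.
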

Note that in the above setting we have an estimate
\begin{equation}\label{f'est}
f_a^\prime (t)\ge (\log t)^{1+\varepsilon_1} + (1+\varepsilon_1)(\log t)^{\varepsilon_1}
\end{equation} 
for $t\ge R_1$.
Let us first consider the case 
\[
a^2(t)=\dfrac{1+\varepsilon}{ t^2\log t}.
\] 

We prove the following:

\begin{proposition}\label{1stprop}
Assume that 
\[
a^2(t)=\dfrac{1+\varepsilon}{ t^2\log t}\ \text{ and }\ 
b^2(t)=\dfrac{(\log t)^{2\tilde{\varepsilon}}}{t^2},
\] 
with $\varepsilon>\tilde{\varepsilon}>0$. Fix $\varepsilon_1\in (\tilde{\varepsilon},\varepsilon)$ and 
define $g(t)=(\log t)^{\alpha}$ for $t>1$ and for a constant $\alpha\in (0,\varepsilon_1-\tilde{\varepsilon})$. 
Then there exist $R>0$ and $\beta>0$ such that, for all $x_0\in\partial_{\infty}M$ and $0<\beta_0\leq \beta$, the 
set 
\[
\cC=\left\{x\in M \colon \varphi\bigl(h(x)\bigr) g\bigl(\rho(x)\bigr)\leq R\right\}
\] 
is strictly convex, where $h$ is the function defined in Section~\ref{sec:constext} related to $x_0\in\partial_{\infty}M$ and 
$\varphi:[0,\infty)\to [0,1]$ is a smooth function such that $\varphi(0)=1$, $\varphi|[\beta_0,\infty)=0$, and 
$|\varphi^\prime(t)|, |\varphi^{\prime \prime}(t)|\leq C$ for all $t\in [0,\beta_0]$. In particular, $M$ satisfies 
the SC condition.
\end{proposition}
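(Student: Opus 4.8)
The plan is to prove that $F:=\varphi(h)\,g(\rho)$ satisfies
\[
D^2F(v,v)>0\quad\text{for every unit }v\perp\nabla F\text{ at every point of the closed set }\{F\ge R\},
\]
provided $R$ is taken large (in terms of $\varepsilon,\tilde\varepsilon,\varepsilon_1,\alpha,\lambda$, the constants $c_4,R_1$ of Lemma~\ref{arvio_lause}, and $\sup_{[0,\beta_0]}(|\varphi'|+|\varphi''|)$; the estimates are uniform in $x_0$ since $c_4,R_1$ depend only on the data). Granting this, strict convexity of $\cC=\{F\le R\}$ follows by a maximum‑principle argument: $\cC$ is closed, $\nabla F\ne0$ on $\{F\ge R\}$ (verified below) so $\{F=R\}$ is a smooth hypersurface, and if some geodesic $\gamma\colon[0,1]\to M$ had $\gamma(0),\gamma(1)\in\cC$ but $F(\gamma(s))>R$ for some $s$, then at an interior maximum $s^\ast$ of $F\circ\gamma$ one would have $\dot\gamma(s^\ast)\perp\nabla F(\gamma(s^\ast))$, $\gamma(s^\ast)\in\{F>R\}$, and $(F\circ\gamma)''(s^\ast)=D^2F(\dot\gamma(s^\ast),\dot\gamma(s^\ast))\le0$, contradicting the displayed inequality; strict convexity is that same inequality read as positivity of the second fundamental form of $\{F=R\}$.

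First I would localize $\{F\ge R\}$. Since $\varphi\le1$, on $\{F\ge R\}$ one has $(\log\rho)^\alpha=g(\rho)\ge R$, i.e.\ $\rho\ge\exp(R^{1/\alpha})$, which is as large as desired for $R$ large; and $\varphi(h)>0$ with $\beta_0\le1$ forces $h<1$, so by Lemma~\ref{arvio_lause} the point lies in $2\Omega\pois B(o,R_1)\subset 3\Omega\pois B(o,R_1)\subset 4\Omega$, where $F$ is smooth, where $|\nabla h|\le c_4/f_a(\lambda\rho)$ and $\|D^2h\|\le c_4\,b(\rho)/f_a(\lambda\rho)$, where $f_a(t)\ge t(\log t)^{1+\varepsilon_1}$ and $f_a'(t)/f_a(t)\ge1/t$ by Proposition~\ref{jacest}, and where $\varphi(h)\ge R/(\log\rho)^\alpha$. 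Next I would estimate the gradient: with $g'(t)=\alpha(\log t)^{\alpha-1}/t$,
\[
\nabla F=\varphi'(h)\,g(\rho)\,\nabla h+\varphi(h)\,g'(\rho)\,\nabla\rho ,
\]
the first summand has norm $\lesssim(\log\rho)^{-(1+\varepsilon_1-\alpha)}/\rho$ and the second has norm $\ge R\alpha/(\rho\log\rho)$; since $\alpha<\varepsilon_1$, the first is $O\bigl((\log\rho)^{-(\varepsilon_1-\alpha)}\bigr)$ times the second. Hence $\nabla F\ne0$ on $\{F\ge R\}$, and for a unit $v\perp\nabla F$ the relation $0=\langle\nabla F,v\rangle$ yields $\langle\nabla\rho,v\rangle^2\le\eta(R)^2$ with $\eta(R)\to0$ as $R\to\infty$.

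The core of the argument is the Hessian bound. Expanding,
\begin{align*}
D^2F(v,v)&=\varphi(h)g'(\rho)D^2\rho(v,v)+\varphi(h)g''(\rho)\langle\nabla\rho,v\rangle^2\\
&\quad{}+\varphi''(h)g(\rho)\langle\nabla h,v\rangle^2+\varphi'(h)g(\rho)D^2h(v,v)+2\varphi'(h)g'(\rho)\langle\nabla h,v\rangle\langle\nabla\rho,v\rangle ,
\end{align*}
the first term being the only genuinely positive one. By the Hessian comparison theorem (using the upper curvature bound $-a^2$ on $4\Omega$) and $f_a'/f_a\ge1/\rho$ one gets $D^2\rho(v,v)\ge\rho^{-1}(1-\langle\nabla\rho,v\rangle^2)\ge\rho^{-1}(1-\eta^2)$, so the first term is $\ge\varphi(h)\,\alpha(\log\rho)^{\alpha-1}\rho^{-2}(1-\eta^2)$. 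The second term is negative, but $|g''(\rho)|\lesssim\alpha(\log\rho)^{\alpha-1}\rho^{-2}$ and $\langle\nabla\rho,v\rangle^2\le\eta^2$, so the first two terms together are still $\ge\varphi(h)\,\alpha(\log\rho)^{\alpha-1}\rho^{-2}(1-C\eta^2)\ge R\alpha/(2\rho^2\log\rho)$ once $\eta(R)$ is small, i.e.\ $R$ is large. Finally, $|\varphi'|,|\varphi''|\le C$ together with the bounds on $\|D^2h\|$ (here $b(\rho)=(\log\rho)^{\tilde\varepsilon}/\rho$) and $|\nabla h|$ and $f_a(\lambda\rho)\gtrsim\rho(\log\rho)^{1+\varepsilon_1}$ make each of the last three terms $O\!\bigl(\rho^{-2}(\log\rho)^{-(1+\delta)}\bigr)$ with $\delta:=\varepsilon_1-\tilde\varepsilon-\alpha>0$; it is exactly in the term $\varphi'(h)g(\rho)D^2h(v,v)$ that the hypothesis $\alpha<\varepsilon_1-\tilde\varepsilon$ enters. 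Since $\log\rho\ge R^{1/\alpha}$ on $\{F\ge R\}$, these three terms are negligible against $R\alpha/(2\rho^2\log\rho)$, and I would conclude $D^2F(v,v)\ge R\alpha/(4\rho^2\log\rho)>0$, so $\cC$ is strictly convex.

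For the SC condition: given $x_0\in\partial_\infty M$ and a relatively open $W\ni x_0$, I would choose the aperture $L$ of Section~\ref{sec:constext} so large that $C(v_0,2/L)\cap\partial_\infty M\subset W$, take $\beta_0=\beta$ with $\varphi$ in addition positive on $[0,\beta_0)$, and set $\Omega:=M\setminus\cC$. Then $\Omega$ is open with smooth boundary $\{F=R\}$ and $M\setminus\Omega=\cC$ is convex; since $h\equiv1$, hence $F\equiv0$, outside $2\Omega$ away from $o$, while $F(\gamma^{o,y}(t))\to\infty$ as $t\to\infty$ for every $y$ with $\ang(v_0,\dot\gamma^{o,y}_0)<\beta_0/L$, one gets
\[
\{y\in\partial_\infty M:\ang(v_0,\dot\gamma^{o,y}_0)<\beta_0/L\}\subset\partial_\infty\Omega\subset\{y\in\partial_\infty M:\ang(v_0,\dot\gamma^{o,y}_0)\le2/L\},
\]
whence $x_0\in\ir\partial_\infty\Omega\subset W$, so $M$ satisfies the SC condition. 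The main obstacle I expect is the coupling of the gradient and Hessian steps: one must first use $v\perp\nabla F$ to force $v$ to be nearly $\rho$‑spherical — otherwise the sole positive term $\varphi(h)g'(\rho)D^2\rho(v,v)$ could degenerate — and then carry out the competition of powers of $\log\rho$ sharply enough to keep the $D^2h$‑term of lower order, which is precisely what fixes the admissible ranges $\varepsilon_1\in(\tilde\varepsilon,\varepsilon)$ and $\alpha\in(0,\varepsilon_1-\tilde\varepsilon)$.
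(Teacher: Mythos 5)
Your proof is correct and follows essentially the same approach as the paper: expand the Hessian of $\varphi(h)g(\rho)$, use the Hessian comparison for $D^2\rho$ together with Lemma~\ref{arvio_lause} (with a fixed $\lambda$) and Proposition~\ref{jacest} for the $\nabla h$ and $D^2h$ terms, show that the orthogonality $v\perp\nabla F$ forces $\langle\nabla\rho,v\rangle^2$ to be small, and conclude that the single positive term $\varphi(h)g'(\rho)D^2\rho(v,v)$ dominates once $R$ is large, with the constraint $\alpha<\varepsilon_1-\tilde\varepsilon$ entering exactly in the $\varphi'(h)g(\rho)D^2h$ term. The only differences are cosmetic: the paper works on $\partial\cC$, substitutes $\langle X,\nabla\rho\rangle$ in terms of $\langle X,\nabla h\rangle$ via the tangency relation before estimating, and leaves the ``positive definite Hessian on the tangent space implies strict convexity'' step implicit, whereas you work on all of $\{F\ge R\}$, estimate $\langle\nabla\rho,v\rangle$ directly, spell out that $\nabla F\ne 0$ there, and supply the maximum-principle argument — a somewhat cleaner exposition of the same computation.
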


\begin{remark}
We begin by noticing that $\lim_{x\to x_0}\varphi(h(x))>0$, and consequently $\lim_{x\to x_0}\varphi\bigl(h(x)\bigr)g\bigl(\rho(x)\bigr)=\infty$,
since $h(x_0)=0$ and $h$ is continuous in $\overline{M}$. On the other hand, $h(x)=1$ if $\ang(\dot\gamma^{o,x_0},\dot\gamma^{o,x}_0)\ge 2/L$
and $\rho(x)$ is large enough. Thus, given a relatively open subset $W\subset\partial_{\infty}M$ containing $x_0$, we ensure that
$x_0\in\operatorname*{Int}\left(  \partial_{\infty}(M\setminus\cC)\right)  \subset W$ by choosing the constants $L$ (see \eqref{defcone}, 
\eqref{eq:hoodef}) and $\beta_0$ properly.
Therefore, if we prove that $\cC$  is strictly convex, $M$ will satisfy the SC condition.  \\
\end{remark}

 \begin{proof}
We will use Lemma~\ref{arvio_lause} with $\lambda=3/4$; any other choice of $0<\lambda<1$ would work as well.
Throughout the proof, all computations are done on the boundary $\partial\cC$ where 
$\varphi\bigl(h(x)\bigr) g\bigl(\rho(x)\bigr)=R$, with sufficiently large $R$. 
To prove the strict convexity of $\cC$, we will show that $D^2 (\varphi(h) g(\rho))$ is positive definite for an arbitrary unit vector field $X$ 
on $\partial\cC$ that is tangential to $\partial\cC$, i.e. 
\[
X\bot \left(g(\rho )\varphi^\prime (h)\nabla h+\varphi(h) g^\prime (\rho)\nabla \rho\right),
\]
in other words
\begin{equation}\label{relortsc2}
\langle X,\nabla\rho\rangle=-\frac{\varphi^\prime (h)g(\rho)\langle X,\nabla h\rangle}{\varphi(h)g^\prime (\rho)}
=-\frac{\varphi^\prime (h)g^2(\rho)\langle X,\nabla h\rangle}{R g^\prime (\rho)}.
\end{equation}
A direct computation gives
\begin{align}
\label{estsce1}
D^2 (\varphi(h) g(\rho))(X,X)&= 
g(\rho)\varphi^{\prime \prime }(h)\langle X,\nabla h\rangle^2 + g(\rho)\varphi^\prime (h)D^2 h (X,X) \nonumber \\
&\quad +\varphi(h) g^{\prime \prime} (\rho)\langle X,\nabla\rho\rangle^2 + \varphi(h)g^\prime (\rho) D^2 \rho(X,X)\\
&\quad +2\varphi^\prime (h) g^\prime (\rho)\langle X,\nabla h\rangle\langle X,\nabla\rho\rangle.\nonumber 
\end{align}
On $\partial\cC$ we have $\varphi(h)=R/g(\rho)$, and therefore we obtain from \eqref{relortsc2} and \eqref{estsce1}
\begin{align}
\label{estsce2}
D^2 (\varphi(h) & g(\rho))(X,X)= 
\frac{R g^\prime (\rho) D^2 \rho(X,X)}{g(\rho)} + g(\rho)\varphi^\prime (h)D^2 h (X,X)  \\
&+\left( g(\rho)\varphi^{\prime \prime }(h) + 
 \frac{g^{\prime \prime}(\rho)\varphi^\prime (h)^2 g^3(\rho)}{R g^\prime(\rho)^2}
 -\frac{2\varphi^\prime(h)^2 g^2 (\rho)}{R}
\right)\langle X,\nabla h\rangle^2.\nonumber
\end{align}
Lemma~\ref{arvio_lause} and \eqref{relortsc2} imply that 
\begin{align*}
\langle X,\nabla\rho\rangle^2 & =\left(\frac{\varphi^\prime (h) g^2(\rho)\langle X,\nabla h\rangle}{R g^\prime (\rho)}\right)^2
\le \left(\frac{c_4\varphi^\prime (h) g^2(\rho)}{R g^\prime (\rho)f_a\bigl(\tfrac34 \rho\bigr)}\right)^2\\
&\le \frac{c}{R^2\rho^2(\log\rho)^{2(\varepsilon_1-\alpha)}}\le \frac{1}{2}.
\end{align*}
Using the Hessian comparison theorem, we deduce from the previous estimate that
\begin{align}
\label{estsce3}
D^2\rho(X,X) &\geq \dfrac{f_a^\prime (\rho)}{f_a (\rho)} \left( \norm{X}^2 -\langle X,\nabla \rho \rangle^2\right)\nonumber\\
& \geq \dfrac{f_a^\prime (\rho)}{f_a (\rho)} \left(1-\left(\dfrac{c_4 \varphi^\prime (h) g(\rho )^2 }{R g^\prime (\rho) 
f_a\bigl(\tfrac34 \rho\bigr)} \right)^2 \right)\\
&\ge \dfrac{f_a^\prime (\rho)}{2 f_a (\rho)}.\nonumber
\end{align}
Next we estimate from below the terms on the right side of \eqref{estsce2}. 
The first term, which will dominate the others, can be estimated by using Proposition~\ref{jacest} and \eqref{estsce3} as
\begin{align}\label{1stest}
\frac{R g^\prime (\rho) D^2 \rho(X,X)}{g(\rho)}&=\frac{R\alpha D^2\rho(X,X)}{\rho\log\rho}\nonumber\\
 &\ge  \frac{R\alpha}{2\rho\log\rho}\left(\frac{1}{\rho}+\frac{1+\varepsilon_1}{\rho\log\rho}\right)\\
 &\ge \frac{R\alpha}{2\rho^2\log\rho}.\nonumber
\end{align}
The second term has a lower bound
\begin{equation}\label{2ndest}
g(\rho)\varphi^\prime(h)D^2 h(X,X)\ge -\frac{c}{\rho^2(\log\rho)^{1+\varepsilon_1 -\tilde{\varepsilon} -\alpha}}
\end{equation}
by Lemma~\ref{arvio_lause} and Proposition~\ref{jacest}. For the last three terms we first have estimates
\begin{align*}
g(\rho)\varphi^{\prime \prime }(h) & \ge -c(\log\rho)^{\alpha},\\
\frac{g^{\prime \prime}(\rho)\varphi^\prime (h)^2 g^3(\rho)}{R g^\prime(\rho)^2} & \ge \frac{-c(\log\rho)^{1+2\alpha}}{R}
\ge -c(\log\rho)^{1+\alpha},\\
-\frac{2\varphi^\prime(h)^2 g^2 (\rho)}{R} & \ge 
 -\frac{c(\log\rho)^{2\alpha}}{R} \ge -c(\log\rho)^{\alpha},
\end{align*}
where we used the fact that $g(\rho)=(\log\rho)^{\alpha}\ge R$ on $\partial\cC$.
Combining these estimates with Lemma~\ref{arvio_lause} and Proposition~\ref{jacest} gives
\begin{align}\label{3rdest}
&\left( g(\rho)\varphi^{\prime \prime }(h) + 
 \frac{g^{\prime \prime}(\rho)\varphi^\prime (h)^2 g^3(\rho)}{R g^\prime(\rho)^2}
 -\frac{2\varphi^\prime(h)^2 g^2 (\rho)}{R}
\right)\langle X,\nabla h\rangle^2\\
&\quad  \ge \frac{-c(\log\rho)^{1+\alpha}}{f_a^2\bigl(\tfrac34 \rho\bigr)}
\ge \frac{-c}{\rho^2(\log\rho)^{1+2\varepsilon_1-\alpha}}.\nonumber
\end{align}
Plugging in estimates \eqref{1stest}, \eqref{2ndest}, and \eqref{3rdest} to \eqref{estsce2} and recalling that $\alpha<\varepsilon_1-\tilde{\varepsilon}$ we obtain
\[
D^2 (\varphi(h) g(\rho))(X,X) \ge 
\frac{R\alpha}{2\rho^2\log\rho} -\frac{c}{\rho^2(\log\rho)^{1+\varepsilon_1 -\tilde{\varepsilon} -\alpha}}
\frac{-c}{\rho^2(\log\rho)^{1+2\varepsilon_1-\alpha}}
>0
\]
for $R$, and hence $\rho$, large enough.
This establishes the proposition.
\end{proof}
More generally, we have :
\begin{proposition}\label{3ndprop}
Assume that 
\[
a^2(t)\geq\dfrac{1+\varepsilon}{ t^2\log t},
\]
$b(t+1)\le C_1b(t),\ b(t/2)\le C_1 b(t)$, and 
\[
\frac{(\log t)^{\tilde{\varepsilon}}}{t}\le b(t) \le 
\begin{cases}
\dfrac{f_a^\prime (t)}{t (\log t)^{1+2\alpha}}\dfrac{f_a (\lambda t)}{f_a (t)};&\\&\\
\dfrac{f_a^\prime (t)}{t (\log t)^{1+2\alpha}}\dfrac{f_a (t-t_0)}{f_a (t)},\text{ if, in addition, $b$ is increasing},
\end{cases}
\] 
for $t$ large enough and for some $\varepsilon>\tilde{\varepsilon}>0,\ t_0>0,\ 0<\lambda<1$, and $\alpha\in (0,\varepsilon_1-\tilde{\varepsilon})$,
with $\varepsilon_1\in (\tilde{\varepsilon},\varepsilon)$. 
Then there exist $R>0$ and $\beta$ such that, for all $x_0\in\partial_{\infty}M$ and $\beta_0\leq \beta$, the set 
\[
\cC=\left\{x\in M \colon \varphi\bigl(h(x)\bigr) g\bigl(\rho(x)\bigr)\leq R\right\}
\]
is strictly convex, where the functions $\varphi$ and $g(t)=(\log t)^{\alpha}$ are as in Proposition~\ref{1stprop}
and $h$ is the function defined in Section~\ref{sec:constext} related to $x_0\in\partial_{\infty}M$. 
In particular, $M$ satisfies the SC condition.
\end{proposition}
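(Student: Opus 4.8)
The plan is to follow the proof of Proposition~\ref{1stprop} essentially verbatim, replacing the two places where the explicit formulas for $f_a$ and $b$ entered by the general estimates available here. First I would note that the hypotheses are tailored so that the construction of Section~\ref{sec:constext} applies to the function $h$ attached to $x_0$: the assumption $a^2(t)\ge(1+\varepsilon)/(t^2\log t)$ is \eqref{A1}, the bound $(\log t)^{\tilde\varepsilon}/t\le b(t)$ is (a consequence of) \eqref{B1}, and $b(t+1)\le C_1b(t)$, $b(t/2)\le C_1b(t)$ are \eqref{A3} and \eqref{A4}. Hence Lemma~\ref{arvio_lause} is available, giving on $3\Omega\setminus B(o,R_1)$ the bounds $|\nabla h(x)|\le c_4/f_a(\lambda\rho(x))$ and $\|D^2h(x)\|\le c_4\,b(\rho(x))/f_a(\lambda\rho(x))$ (with $f_a(\rho-t_0)$ in place of $f_a(\lambda\rho)$ when $b$ is increasing), together with $h\equiv1$ on $M\setminus(2\Omega\cup B(o,R_1))$. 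Moreover, since $a^2(t)\ge(1+\varepsilon)/(t^2\log t)$, Proposition~\ref{jacest} yields, for our fixed $\varepsilon_1\in(\tilde\varepsilon,\varepsilon)$, the inequalities $f_a(t)\ge t(\log t)^{1+\varepsilon_1}$, $f_a^\prime(t)/f_a(t)\ge1/t$, and \eqref{f'est}; in particular $f_a(\lambda\rho)\ge c\,\rho(\log\rho)^{1+\varepsilon_1}$ for $\rho$ large, for any fixed $\lambda\in(0,1)$.

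Next I would carry out the computation of Proposition~\ref{1stprop} unchanged up through \eqref{relortsc2}, \eqref{estsce2}, \eqref{estsce3}, and \eqref{1stest}: for a unit vector $X$ tangent to $\partial\cC$ the orthogonality relation \eqref{relortsc2} holds, the estimate $\langle X,\nabla\rho\rangle^2\le1/2$ follows from $|\nabla h|\le c_4/f_a(\lambda\rho)$ and the above lower bound on $f_a(\lambda\rho)$ (using only $\varepsilon_1>\alpha$, exactly as before), and therefore the Hessian comparison theorem gives $D^2\rho(X,X)\ge\tfrac12 f_a^\prime(\rho)/f_a(\rho)$, so the leading term of \eqref{estsce2} is bounded below by $\tfrac{R\alpha}{2\rho\log\rho}\cdot\tfrac{f_a^\prime(\rho)}{f_a(\rho)}$. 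The one genuinely new input is the bound on the second term, and this is precisely where the upper bound on $b$ is used: the hypothesis $b(t)\le \tfrac{f_a^\prime(t)}{t(\log t)^{1+2\alpha}}\cdot\tfrac{f_a(\lambda t)}{f_a(t)}$ is reverse-engineered so that $b(\rho)/f_a(\lambda\rho)\le f_a^\prime(\rho)/(\rho(\log\rho)^{1+2\alpha}f_a(\rho))$, whence Lemma~\ref{arvio_lause} and $g(\rho)=(\log\rho)^\alpha$ give
\[
g(\rho)\varphi^\prime(h)D^2h(X,X)\ \ge\ -\frac{c\,f_a^\prime(\rho)}{\rho(\log\rho)^{1+\alpha}f_a(\rho)}.
\]
The remaining three terms of \eqref{estsce2} are estimated precisely as in Proposition~\ref{1stprop} (using $|g^{\prime\prime}(\rho)|\le c(\log\rho)^{\alpha-1}/\rho^2$ and the fact that $g(\rho)\ge R$ on $\partial\cC$), producing a lower bound $-c/(\rho^2(\log\rho)^{1+2\varepsilon_1-\alpha})$.

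Finally I would collect the three bounds into
\[
D^2\bigl(\varphi(h)g(\rho)\bigr)(X,X)\ \ge\ \frac{R\alpha}{2\rho\log\rho}\frac{f_a^\prime(\rho)}{f_a(\rho)}-\frac{c\,f_a^\prime(\rho)}{\rho(\log\rho)^{1+\alpha}f_a(\rho)}-\frac{c}{\rho^2(\log\rho)^{1+2\varepsilon_1-\alpha}}
\]
and check positivity for $R$ large. On $\partial\cC$ we have $(\log\rho)^\alpha=g(\rho)\ge R$, so $\rho$ is large once $R$ is; the first term dominates the second because $\alpha>0$ and $R\cdot(\log\rho)^\alpha\ge R^2$, and it dominates the third because $f_a^\prime(\rho)/f_a(\rho)\ge1/\rho$ reduces the comparison to $R\alpha(\log\rho)^{2\varepsilon_1-\alpha}\to\infty$, which holds since $\alpha<\varepsilon_1-\tilde\varepsilon<2\varepsilon_1$. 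Hence $D^2(\varphi(h)g(\rho))$ is positive definite on directions tangent to $\partial\cC$, so $\cC$ is strictly convex, and the SC condition follows as in the Remark after Proposition~\ref{1stprop} by choosing $L$ and $\beta_0$ appropriately. The increasing-$b$ case is handled identically, using the second branch of the hypothesis together with $f_a(\rho-t_0)\ge c\,\rho(\log\rho)^{1+\varepsilon_1}$. The main point to be careful about is exactly the matching between the upper bound on $b$ and the $\|D^2h\|$ term in the second summand; everything else is a transcription of the arguments already carried out for Proposition~\ref{1stprop}.
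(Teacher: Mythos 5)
Your proposal is correct and follows essentially the same route as the paper's proof: carry over the computation of Proposition~\ref{1stprop} verbatim up to \eqref{estsce2}, use $D^2\rho(X,X)\ge\tfrac12 f_a'/f_a$ for the leading term, invoke the upper bound on $b$ precisely to convert $g(\rho)|\varphi'(h)|\,\|D^2h\|\le c(\log\rho)^\alpha b(\rho)/f_a(\lambda\rho)$ into $c f_a'(\rho)/\bigl(\rho(\log\rho)^{1+\alpha}f_a(\rho)\bigr)$, and bound the remaining terms by $-c/(\rho^2(\log\rho)^{1+2\varepsilon_1-\alpha})$. Your positivity check (using $(\log\rho)^\alpha\ge R$ on $\partial\cC$ and $f_a'/f_a\ge1/\rho$) is a slightly different bookkeeping of the same comparison the paper makes, and the treatment of the increasing-$b$ case is identical.
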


\begin{proof}
The proof of Proposition \ref{1stprop} applies almost verbatim. We will only consider the general case for $b$, the case of an increasing $b$
being a simple adaptation. Proceeding as previously, we get
\[
\langle X,\nabla\rho\rangle^2\le \frac{1}{2}
\]
and
\[
\frac{R g^\prime (\rho) D^2 \rho(X,X)}{g(\rho)}\ge \frac{R\alpha}{\rho\log\rho}\dfrac{f_a^\prime (\rho)}{2 f_a (\rho)}
\]
for $R$, and hence $\rho$, sufficiently large. As in \eqref{3rdest}, we have
\[
\left( g(\rho)\varphi^{\prime \prime }(h) + 
 \frac{g^{\prime \prime}(\rho)\varphi^\prime (h)^2 g^3(\rho)}{R g^\prime(\rho)^2}
 -\frac{2\varphi^\prime(h)^2 g^2 (\rho)}{R}
\right)\langle X,\nabla h\rangle^2\quad  \ge \frac{-c(\log\rho)^{1+\alpha}}{f_a^2\bigl(\lambda \rho\bigr)}.
\]
By Proposition~\ref{jacest}, we obtain
\[
\frac{R\alpha}{4\rho\log\rho}\dfrac{f_a^\prime (\rho)}{f_a (\rho)}\ge \frac{R}{4\rho^2\log\rho}
\]
and
\[
\frac{c(\log\rho)^{1+\alpha}}{f_a^2\bigl(\lambda \rho\bigr)}\le 
\frac{c(\log\rho)^{1+\alpha}}{\lambda^2 \rho^2(\log\rho)^{2+2\varepsilon_1}}
\le \frac{R}{4\rho^2\log\rho},
\]
and therefore
\[
\frac{R\alpha}{\rho\log\rho}\dfrac{f_a^\prime (\rho)}{2 f_a (\rho)} -
 \frac{c(\log\rho)^{1+\alpha}}{f_a^2\bigl(\lambda \rho\bigr)}
\ge \frac{R\alpha}{4\rho\log\rho}\dfrac{f_a^\prime (\rho)}{f_a (\rho)}
\]
for $R$, consequently $\rho$, large enough.
Plugging these estimates into \eqref{estsce2} we obtain 
\begin{align*}
D^2 (\varphi(h) g(\rho))(X,X) 
&\geq - g(\rho)|f^\prime (h)| |D^2 h (X,X)|+ \frac{R\alpha}{4\rho\log\rho} \dfrac{f_a^\prime (\rho)}{f_a(\rho)}.
\end{align*}
Using \eqref{arvio1} and the assumption 
\[
\dfrac{b(t)}{f_a (\lambda t)} \leq \dfrac{f_a^\prime (t)}{t (\log t)^{1+2\alpha}f_a (t)}, 
\] 
we have
\[
g(\rho)|\varphi^\prime (h)| |D^2 h (X,X)|\leq   \dfrac{c(\log \rho)^\alpha  b(\rho)}{f_a (\lambda\rho)}\leq \dfrac{cf_a^\prime (\rho)}{\rho (\log \rho)^{1+\alpha} f_a(\rho)} < \frac{R\alpha}{4\rho\log\rho} \dfrac{f_a^\prime (\rho)}{f_a(\rho)}
\]
for $R$, and hence $\rho$, sufficiently large.
This concludes the proof.
\end{proof}

\subsection{Proof of Theorem \ref{thmSC} under assumption (2)}
In this subsection we assume that
\[
-b^{2} (\rho (x)) \le K_M(x) \le -a^2(\rho (x))
\]
for all $x\in M$, with $\rho (x)=d(x,o)\geq R^\ast $, where $a$ and $b$ are non-decreasing continuous functions such that $k:=b(0)>0$ and
\[
\underset{t\rightarrow +\infty}{\lim} \dfrac{t (\log t)^{1+\varepsilon}f_a( t-2) b(t)}{f_a^\prime (t-2) f_a (t-3)}<\infty .
\]

Let $\phi:[0,+\infty)\rightarrow [0,1]$ be a smooth non-decreasing function such that $\phi([0,1/2])=0$ and $\phi\equiv1$ on $[1,+\infty)$. 
We also assume that there exists a constant $L>0$ such that $\phi^{\prime},\phi^{\prime\prime}\leq L$. 
We are now in position to state the following lemma which is a crucial tool in our construction. In what follows,
$S(p,r)=\partial B(p,r)$.
\begin{lemma} 
\label{betaL}
There exists a constant $\beta>0$ depending only on $k$ and $L$ such that for
\begin{equation}
\varepsilon_{R}:=\dfrac{\beta}{ b(R+1)}\dfrac{f_a^\prime (R)}{f_a(R)},\label{epsilonR}
\end{equation}
the sublevel set 
\[
 C_{R,p}=\{x\in M\colon  \left(\rho -\varepsilon_{R}\phi(\rho_p)\right)(x)\leq R\}
\]
is a strictly convex set for all  $R\ge R^{\ast}$ and $p\in S(o,R)$.
\end{lemma}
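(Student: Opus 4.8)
**The plan is to verify strict convexity of $C_{R,p}$ by computing the Hessian of the defining function $\psi := \rho - \varepsilon_R\,\phi(\rho_p)$ on $\partial C_{R,p}$ and showing it is positive definite on the tangent space to $\partial C_{R,p}$, uniformly for $R\ge R^\ast$.** The function $\psi$ is $C^2$ away from $o$ and $p$; since on $\partial C_{R,p}$ we have $\rho \ge R - \varepsilon_R$, which is large, we are in the smooth regime for $\rho$, and $\rho_p$ is smooth on $M\setminus\{p\}$ (and irrelevant wherever $\phi(\rho_p)=0$). So first I would fix $R\ge R^\ast$, $p\in S(o,R)$, a point $x\in\partial C_{R,p}$, and a unit vector $X\in T_xM$ tangent to $\partial C_{R,p}$, i.e. $\langle X, \nabla\psi\rangle = 0$, which reads
\[
\langle X,\nabla\rho\rangle = \varepsilon_R\,\phi'(\rho_p)\langle X,\nabla\rho_p\rangle .
\]
Then expand
\[
D^2\psi(X,X) = D^2\rho(X,X) - \varepsilon_R\phi''(\rho_p)\langle X,\nabla\rho_p\rangle^2 - \varepsilon_R\phi'(\rho_p)D^2\rho_p(X,X).
\]

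**The key estimates.** For the good term $D^2\rho(X,X)$ I would use the Hessian comparison theorem with the upper curvature bound $-a^2(\rho)$: $D^2\rho(X,X) \ge \tfrac{f_a'(\rho)}{f_a(\rho)}\bigl(1 - \langle X,\nabla\rho\rangle^2\bigr)$. To control the bad terms I need two things. First, $|\nabla\rho_p| = 1$, so $\langle X,\nabla\rho_p\rangle^2\le 1$, and the $\phi''$ term is bounded below by $-\varepsilon_R L$. Second, for $D^2\rho_p(X,X)$ I would apply the Hessian comparison theorem with the \emph{lower} curvature bound $-b^2(\rho)$: $D^2\rho_p(X,X)\le \tfrac{g_b'(\rho_p)}{g_b'(\rho_p)}\cdot$ wait — more precisely $D^2\rho_p(X,X)\le (b\circ r)\coth$-type bound, but since $b$ is only a function of $\rho(x)=d(x,o)$ and not of $d(x,p)$, I would instead use the crude bound valid on a Cartan–Hadamard manifold with sectional curvature $\ge -b^2$: on $\partial C_{R,p}$, a point $x$ satisfies $\phi'(\rho_p)\ne 0$ only when $\rho_p \le 1$, so $x\in B(p,1)$, hence $\rho(x)\le R+1$ and $\rho(x)\ge R-1$; on $B(p,1)$ the curvature is $\ge -b^2(R+1)$, so $D^2\rho_p(X,X)\le b(R+1)\coth\bigl(b(R+1)\rho_p\bigr)\le b(R+1) + 1/\rho_p$ — but the $1/\rho_p$ singularity cancels against... actually the right move is $D^2\rho_p(X,X)\le b(R+1)\coth(b(R+1)\rho_p)$ and then note $\varepsilon_R\phi'(\rho_p)D^2\rho_p(X,X)$ is what we need; I should restrict attention to where $\phi'\ne 0$, i.e. $\rho_p\in[1/2,1]$, where $\coth(b(R+1)\rho_p)\le \coth(b(R+1)/2)$ is bounded by a constant depending on $k=b(0)\le b(R+1)$... no: $b(R+1)$ is large, so $\coth(b(R+1)/2)\le 2$, giving $D^2\rho_p(X,X)\le 2b(R+1)$ on $\{\phi'\ne 0\}$. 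Thus the bad terms together are $\ge -\varepsilon_R L - 2\varepsilon_R L\, b(R+1)$.

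**Putting it together and choosing $\beta$.** From the orthogonality relation and $|\nabla\rho_p|\le 1$, $\langle X,\nabla\rho\rangle^2 \le \varepsilon_R^2 L^2$, which by the definition \eqref{epsilonR} of $\varepsilon_R$ and Proposition~\ref{jacest} (so that $f_a'/f_a \le C$ for large argument, or at least is bounded on $[R^\ast,\infty)$ after possibly enlarging $R^\ast$) is $\le \beta^2 L^2 (f_a'(R)/f_a(R))^2 / b(R+1)^2 \le 1/2$ provided $\beta$ is small (here using $b(R+1)\ge b(0)=k>0$ and boundedness of $f_a'/f_a$). Hence $D^2\rho(X,X)\ge \tfrac12 \tfrac{f_a'(\rho)}{f_a(\rho)}$, and since $\rho\in[R-1,R+1]$ on the relevant part and $f_a'/f_a$ is, say, comparable at nearby radii (using $f_a'' = a^2 f_a \ge 0$ monotonicity and the $1/(t\log t)$-type lower bound from Proposition~\ref{jacest}), this is $\gtrsim \tfrac{f_a'(R)}{f_a(R)}$. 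Then
\[
D^2\psi(X,X) \ge c_0\,\frac{f_a'(R)}{f_a(R)} - \varepsilon_R L - 2\varepsilon_R L\, b(R+1)
= c_0\,\frac{f_a'(R)}{f_a(R)} - \frac{\beta L}{b(R+1)}\frac{f_a'(R)}{f_a(R)} - 2\beta L\,\frac{f_a'(R)}{f_a(R)},
\]
which is $\ge (c_0 - \beta L/k - 2\beta L)\tfrac{f_a'(R)}{f_a(R)} > 0$ once $\beta$ is chosen small depending only on $k$ and $L$. **The main obstacle** is handling the term $\varepsilon_R\phi'(\rho_p)D^2\rho_p(X,X)$ correctly: one must confine to the annular shell $\{\rho_p\in[1/2,1]\}$ where $\phi'$ is supported, observe $\rho(x)\le R+1$ there so the lower curvature bound is effectively $-b^2(R+1)$, and exploit that $b(R+1)$ being large makes $\coth(b(R+1)\rho_p)$ a harmless constant — this is exactly why $\varepsilon_R$ carries the factor $b(R+1)^{-1}$, so the product $\varepsilon_R\cdot b(R+1)$ stays bounded by $\beta f_a'(R)/f_a(R)$ and can be absorbed into the positive Hessian-of-$\rho$ term. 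I would also double-check that $f_a'/f_a$ is bounded on $[R^\ast,\infty)$ (or grows slower than any positive power), which follows from Proposition~\ref{jacest} / the pinching hypothesis \eqref{pinch}, and that $\rho(x)$ stays bounded away from $0$ so $\rho$ is smooth — both are automatic since $R\ge R^\ast$ is large.
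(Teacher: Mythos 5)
Your proposal follows essentially the same route as the paper: same decomposition of $D^2\psi$, same use of the tangency relation, Hessian comparison from above with $f_a'/f_a$ for $D^2\rho$, Hessian comparison from below with $b(R+1)\coth(b(R+1)\rho_p)$ for $D^2\rho_p$ on the support of $\phi'$, and the same choice of $\varepsilon_R$ carrying the factor $b(R+1)^{-1}$ to absorb the $D^2\rho_p$ term. Two small inaccuracies, neither fatal: your first instinct was correct and your self-correction wrong --- one cannot assert $\coth(b(R+1)/2)\le 2$, since $b(R+1)$ is only bounded below by $k=b(0)$, which may be small; the right bound is $\coth(b(R+1)\rho_p)\le \coth(k/2)$, a constant depending on $k$ as you first said. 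Likewise $f_a'/f_a$ is not in general bounded on $[R^\ast,\infty)$; what makes $\varepsilon_R L\cdot f_a'(R)/f_a(R)$ harmless is the comparison $f_a'(R)/f_a(R)\le a(R)\coth(a(R)R)\lesssim a(R)\le b(R)\le b(R+1)$ (using $a\le b$ non-decreasing), so that $\varepsilon_R\, f_a'(R)/f_a(R)=\beta\, b(R+1)^{-1}\bigl(f_a'(R)/f_a(R)\bigr)^2\lesssim\beta\, f_a'(R)/f_a(R)$, which is then absorbed as in the paper.
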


\begin{proof}
The strict convexity of the set $C_{R,p}$ will follow from the fact that 
\[
D^{2}(\rho -\varepsilon_{R}\phi\circ\rho_p)(X,X)>0
\]
whenever $X$ is a unit vector field tangent to the level set of $C_{R,p}$ at $q\in \partial C_{R,p}$, i.e.   
\begin{equation}\label{ort}
\langle X, \nabla\rho-\varepsilon_{R}\nabla(\phi \circ \rho_p)\rangle=0.
\end{equation} 
Let $X$ be such a vector field. We begin by noticing, since $\rho$ is a distance function, that 
$$D^{2}\rho(X,X)=D^{2}\rho(X^{\perp},X^{\perp}),$$ where $X^{\perp}=X-\langle
X,\nabla\rho\rangle\nabla\rho$.
Straightforward computations then yield to
\begin{align}
\label{betaLe1}
D^{2}(\rho-\varepsilon_{R}\phi\circ\rho_p)(X,X)   
 & =D^{2}\rho(X^\perp,X^\perp)-\varepsilon_{R}\phi^{\prime}D^{2}\rho_{p}
(X,X)\nonumber\\
&-\varepsilon_{R}\phi^{\prime\prime}\langle\nabla\rho_{p},X\rangle^{2}.
\end{align}
Let us estimate each terms of \eqref{betaLe1}. Since $K_{M}(x)\leq - a^2(\rho (x))$, for all $x\in \partial C_{R,p}$ and $X$ satisfies \eqref{ort}, it follows from the Hessian comparison theorem that

\begin{align}
D^{2}\rho(X^\perp,X^\perp) & \geq \dfrac{f_a^\prime (x)}{f_a(x)} \left(\| X\|^{2}-\langle X,\nabla\rho\rangle^{2}\right) \nonumber\\
& = \dfrac{f_a^\prime (x)}{f_a(x)} \left(1 - \varepsilon_{R} \phi^{\prime}\langle X,
\nabla\rho_{p} \rangle \langle X, \nabla\rho \rangle\right) \nonumber \\
&  \ge
\dfrac{f_a^\prime (R)}{f_a(R)} (1-\varepsilon_{R} L) .\label{D2rhoo}
\end{align}

Since $\phi^{\prime}\equiv0$ on  $[0,1/2]\cup [1,+\infty)$, $b_{R+1}=b(R+1)\geq k$ and $K_{M}\geq-b^2_{R+1}$ in $B(p,1)$, applying the Hessian comparison theorem in the other direction, we deduce that
\begin{align}
\phi^{\prime}D^{2}\rho_{p}(X,X)  &  \leq\phi^{\prime}b_{R+1}\coth(b_{R+1}\rho
_{p})\Vert X-\langle X,\nabla\rho_{p}\rangle\nabla\rho_{p}\Vert^{2}\nonumber\\
&  =\phi^{\prime}b_{R+1}\coth(b_{R+1}\rho_{p})\left(  1-\langle
X,\nabla \rho_{p}\rangle^{2}\right) \nonumber\\
&\leq L b_{R+1}\coth(k/2). \label{D2rhop}
\end{align}
Finally, plugging \eqref{D2rhoo} and \eqref{D2rhop} into \eqref{betaLe1}, we get that
\[
D^{2}(\rho-\varepsilon_{R}f_{p})(X,X)    \geq \dfrac{f_a^\prime (R)}{f_a(R)} -\varepsilon
_{R}L\left(\dfrac{f_a^\prime (R)}{f_a(R)} + b_{R+1}\coth(k/2)+1\right).
\]
Now, taking 
\[
\varepsilon_R = \dfrac{\beta}{b_{R+1}}\dfrac{f_a^\prime (R)}{f_a(R)}
\] 
for some $\beta>0$ small enough, it is easy to see that
\[
D^{2}(\rho-\varepsilon_{R}\varphi\circ \rho_p)(X,X)> 0.
\]
This concludes the proof of the lemma.
\end{proof}
\begin{remark}
\label{rmk1}
The value of $\varepsilon_R$ in the previous lemma will be crucial in the sequel. We would like to point out that it is possible to replace the term $b(R+1)$ appearing in the definition of $\varepsilon_R$ by some terms involving only $b(R+\varepsilon)$ for some arbitrarily small positive $\varepsilon$. 
In order to prove this fact, we will modify the cut-off function $\phi$. More precisely, we let $\phi:[0,+\infty)\rightarrow [0,1]$ be a smooth non-decreasing function such that
$\phi([0,\varepsilon])=0$, $\phi\equiv1$ on $[2\varepsilon,+\infty)$, where $\varepsilon>0$ will be determined in the sequel. We also assume that there exists a constant $L>0$ such that $\phi^{\prime}\leq \dfrac{L}{\varepsilon}$ and $\phi^{\prime\prime}\leq \dfrac{L}{\varepsilon^2}$. 
Since $\phi^{\prime}\equiv0$ on  $[0,\varepsilon]\cup [2\varepsilon,+\infty)$ and $K_{M}\geq-b^2_{R+2\varepsilon}$ in $B(p,2\varepsilon)$, applying the Hessian comparison theorem in the other direction, we deduce that
\begin{align}
\phi^{\prime}D^{2}\rho_{p}(X,X)  &  \leq\phi^{\prime}b_{R+2\varepsilon}\coth(b_{R+2\varepsilon}\rho
_{p})\Vert X-\langle X,\nabla\rho_{p}\rangle\nabla\rho_{p}\Vert^{2}\nonumber\\
&  =\phi^{\prime}b_{R+2\varepsilon}\coth(b_{R+2\varepsilon}\rho_{p})\left(  1-\langle
X,\nabla \rho_{p}\rangle^{2}\right) \nonumber\\
&\leq \dfrac{L}{\varepsilon} b_{R+2\varepsilon}\coth(b_{R+2\varepsilon} \varepsilon). \label{D2rhopr}
\end{align}
Finally, plugging \eqref{D2rhoo} and \eqref{D2rhopr} into \eqref{betaLe1}, we get that
\[
D^{2}(\rho-\varepsilon_{R}f_{p})(X,X)    \geq \dfrac{f_a^\prime (R)}{f_a(R)} -\varepsilon
_{R}L\left(\dfrac{f_a^\prime (R)}{f_a(R)} + \dfrac{b_{R+2\varepsilon}}{\varepsilon}\coth(b_{R+2\varepsilon} \varepsilon )+\dfrac{1}{\varepsilon^2}\right).
\]
Now, taking 
\[
\varepsilon_R =\beta \dfrac{f_a^\prime (R)}{f_a(R)} \min \left\{ \varepsilon^2, \dfrac{\varepsilon}{b_{R+2\varepsilon} \coth(b_{R+2\varepsilon} \varepsilon )}   \right\}
\] 
for some $\beta>0$ small enough, it is easy to see that
\[
D^{2}(\rho-\varepsilon_{R}\varphi\circ \rho_p)(X,X)> 0.
\]

\end{remark}

\begin{proof}[Proof of Theorem \ref{thmSC} under assumption \eqref{pinch}]
To prove that $M$ satisfies the SC condition, it is sufficient to show that given $\gamma(\infty)\in\partial_{\infty} M$ for some geodesic $\gamma$ such that $\gamma (0)=o$ and $0<\alpha<\pi/2$, there exists
a convex set $C\subset M$ such that

\begin{enumerate}
\item[(i)] $\partial_{\infty}C\supseteq
\{\tilde{\gamma}(\infty)\colon \sphericalangle_{o}(\tilde{\gamma}(\infty),\gamma(\infty))\geq
2\alpha\}$

\item[(ii)] $C\cap T(\dot{\gamma}(0),\alpha,r_{0}+1)=\emptyset$
for some $r_{0}\geq R^{\ast}$ large enough.
\end{enumerate}

We follow exactly the same construction as the one  in \cite{RT} (see also \cite{andJDG} and \cite{borbpams}). We outline it for the sake of self-containment. This construction is based on an iterative procedure so let us begin by initializing it. Let $r_{0}\geq R^{\ast}$ to be determined in the sequel and let $v:=\dot{\gamma}_0\in T_{o}M$. We start by defining the following
sets:
\[
C_{0}:=B(o,r_{0}),\,\,\,\,\,\,T_{0}:=\{p\in S(o,r_{0})\colon \sphericalangle
(\gamma^{o,p},v))<\alpha\},\,\,\,\,\,\,D_{0}=\emptyset .
\]
Using Lemma \ref{betaL}, we can find an uniform $\varepsilon_{0}
:=\varepsilon_{r_{0}}$ such that for each $p\in S(o,r_{0})$, the sublevel
set
\[
C_{0,p}:=\{x\in M\colon\text{ }(\rho_{o}-\varepsilon_{0}f_{p})(x)\leq r_{0}\}
\]
is convex. Then, we define a new collection of sets:
\begin{align*}
&  \widetilde{C_{1}}:=\bigcap_{p\in T_{0}}C_{0,p},\,\,\,\,\,\,C_{1}
:=\widetilde{C_{1}}\setminus D_{0},\,\,\,\,\,\,r_{1}:=r_{0}+\varepsilon_{0}\\
&  D_{1}:=B(o,r_{1})\setminus C_{1},\,\,\,\,\,\,T_{1}:=\overline{S(o,r_{1})\setminus\partial C_{1}}.
\end{align*}
The induction scheme is then the following : Let us assume that $C_{k}
,r_{k},D_{k},T_{k}$ are defined for all $k\leq n-1$, $n\geq1$. Using Lemma \ref{betaL}, we get the existence of an uniform
$\varepsilon_{n}$ such that if $p\in S(o,r_{n-1})$, the set $C_{n,p}
:=\{\rho_{o}-\varepsilon_{n-1}f_{p}\leq r_{n-1}\}$ is convex. The inductive construction is then given by
\begin{align*}
&  \widetilde{C_{n}}:=\bigcap_{p\in T_{n-1}}C_{n-1,p},\,\,\,\,\,\,C_{n}
:=\widetilde{C_{n}}\setminus D_{n-1},\,\,\,\,\,\,r_{n}:=r_{n-1}+\varepsilon
_{n-1}\\
&  D_{n}:=B(o,r_{n})\setminus C_{n},\,\,\,\,\,\,T_{n}:=\overline{S(o,r_{n})\setminus\partial C_{n}}.
\end{align*}
Finally, we set 
\[
C:=\bigcup_{n\ge0}C_{n}\quad\text{and}\quad 
D:=\bigcup_{n\ge0} D_{n}.
\]
In exactly the same way as in \cite{RT}, one can prove that $C$ is a convex set, $M=C\cup D$ and that $(ii)$ holds. Therefore, we are reduced to prove $(i)$. We first notice that, if $x\in D$, then the following estimate holds
\[
\sphericalangle_{o}(v,\dot{\gamma}^{o,x}(0))\leq\alpha+\sum_{n=0}^{+\infty
}\theta_{n},
\]
where $\theta_n$ is defined by
\[
 \theta_{n}:=\sup\{\sphericalangle(\dot{\gamma}^{o,p}(0),\dot{\gamma}^{o,q}(0))\colon p\in T_{n},q\in S(p,1)\}.
 \]
We claim that, taking $r_0$ large enough, 
\[
\sum_{n=0}^{+\infty}\theta_{n} \leq \alpha.
\] 
We begin by rewriting this sum as 
\[
\displaystyle\sum_{i=0}^{\infty}\theta_{i}   =\sum_{n=0}^{\infty}\sum_{r_{i}\in I_{n}}\theta_{i},
\] 
where $I_{n}:=[r_{0}+n,r_{0}+n+1]$. Denote by $t_{n}$ the number of
$r_{i}$'s on the interval $I_{n}$, $n\geq0$. To estimate $t_n$ let us define $j_{n}$
as the index of the greatest $r_{i}$ on $I_{n}$. Since $\varepsilon_{i}$ is
decreasing on $i$, we have:
\begin{align*}
t_{n}\varepsilon_{j_{n}}  &  \leq\varepsilon_{j_{n}-1}+\varepsilon_{j_{n}
-2}+\cdots+\varepsilon_{j_{n}-t_{n}}\\
&  =(r_{j_{n}}-r_{j_{n}-1})+(r_{j_{n}-1}-r_{j_{n}-2})+\cdots+(r_{j_{n}
+1-t_{n}}-r_{j_{n}-t_{n}})\\
&  
\leq(r_{0}+n+1)-(r_{0}+n)=1.
\end{align*}
This implies that $$t_n \le \varepsilon_{j_n}^{-1} \leq \dfrac{b(r_0 +n+2)}{\beta}\dfrac{f_a (r_0+n)}{f_a^\prime(r_0+n)}.$$
Using Lemma \ref{kulma_arvio} and our assumptions on the curvature, we deduce that
$$\theta_{n}\leq \dfrac{c}{f_a (r_0+n-1)},$$
for some constant $c>0$. Combining the above estimates and since by assumption
\[
\underset{t\rightarrow \infty}{\lim} \dfrac{t (\log t)^{1+\varepsilon}f_a(t-2) b(t)}{f_a (t-3) f_a^\prime (t-2)}<\infty ,
\] 
we obtain that
\begin{align*}
\sum_{i=0}^{\infty}\theta_{i}  &  \leq\sum_{n=0}^{\infty}t_{n}\frac{c}{f_a(r_0+n-1)}\leq \dfrac{c}{\beta}
\sum_{n=0}^{\infty}\frac{b(r_0+n+2)}{f_a(r_0+n-1)}\dfrac{f_a (r_0+n)}{f_a^\prime(r_0+n)}<\infty.\\
\end{align*}
This proves the claim.
\end{proof}
\begin{remark}
\label{rmk2}
We notice that it is possible to improve the previous estimate by taking $I_n=\bigl[r_0+\sum_{i=1}^n \tfrac 1i, r_0 +\sum_{i=1}^{n+1} \tfrac 1i \bigr]$. 
With this choice, we have
\[
t_n\leq \dfrac{1}{n\varepsilon_{j_n}}\leq \dfrac{b\left(1+r_0 +\sum_{i=1}^{n+1}\tfrac 1i \right)}{n\beta}
\dfrac{f_a \left(r_0+\sum_{i=1}^{n}\tfrac 1i \right)}{f_a^\prime\left(r_0+\sum_{i=1}^{n}\tfrac 1i \right)},
\]
and
\[
\theta_{n}\leq \dfrac{c }{f_a \left(r_0+\sum_{i=1}^{n+1}\tfrac 1i \right)}.
\]
This yields to
\begin{align*}
\sum_{i=0}^{\infty}\theta_{i}  &  \leq\sum_{n=0}^{\infty}t_{n}\frac{c}{f_a\left(r_0+\sum_{i=1}^{n+1}\tfrac 1i \right)}
\leq \dfrac{c}{\beta}
\sum_{n=0}^{\infty}\dfrac{ b\left(1+r_0+\sum_{i=1}^{n+1}\tfrac 1i \right)}{n f_a\left(r_0+\sum_{i=1}^{n+1}\tfrac 1i \right)}
\dfrac{f_a \left(r_0+\sum_{i=1}^{n}\tfrac 1i \right)}{f_a^\prime\left(r_0+\sum_{i=1}^{n}\tfrac 1i\right)}.\\
\end{align*}
\end{remark}

\section{Asymptotic Plateau problem}
In this section we prove Theorems~\ref{TSMC}, \ref{TSMC2}, and \ref{TSMC3} on the asymptotic Plateau problem.
Throughout the section we assume that $M$ is an $n$-dimensional Cartan-Hadamard manifold, $n\ge 3$, that satisfies the SC condition.
We start with the following fundamental lemma.
\begin{lemma}\label{funlem} 
For every closed subset $\Gamma\subset\partial_{\infty}M$ there exists a closed subset $\Lambda\subset M$ whose interior
$\Int\Lambda$ is a convex subset of $M$ and $\partial_{\infty}\Lambda=\Gamma$. 
\end{lemma}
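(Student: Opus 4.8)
The plan is to realize $\Lambda$ as (essentially) the smallest closed convex subset of $M$ whose asymptotic boundary contains $\Gamma$, and to use the SC condition to keep this set from being ``too big at infinity''. First I would dispose of the degenerate cases: if $\Gamma=\emptyset$ take $\Lambda=\emptyset$; if $\Gamma=\{\xi\}$ is a single point, take $\Lambda=\gamma^{o,\xi}([0,\infty))$, a geodesic ray, which is closed, has empty (hence convex) interior, and satisfies $\partial_\infty\Lambda=\{\xi\}$. So assume henceforth that $\Gamma$ has at least two points.

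In this case I would set
\[
\Lambda:=\bigcap\bigl\{C\subseteq M:\ C\ \text{closed and convex},\ \Gamma\subseteq\partial_\infty C\bigr\}.
\]
This family is nonempty (it contains $M$) and closed under arbitrary intersections, so $\Lambda$ is closed and convex, and consequently $\Int\Lambda$ is convex. To see $\Lambda\neq\emptyset$, recall that a Cartan--Hadamard manifold satisfying the SC condition is a visibility manifold (in particular it contains no flat half-planes), so any two distinct points $\xi_1,\xi_2\in\Gamma$ are joined by a unique geodesic line $\gamma^{\xi_1,\xi_2}$; this line lies in every member $C$ of the family above, hence in $\Lambda$. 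The same observation yields $\Gamma\subseteq\partial_\infty\Lambda$: given $\xi\in\Gamma$, pick $\xi'\in\Gamma\setminus\{\xi\}$; then $\gamma^{\xi,\xi'}(\R)\subseteq\Lambda$, and so $\xi\in\partial_\infty\Lambda$.

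The core of the argument is the opposite inclusion $\partial_\infty\Lambda\subseteq\Gamma$, where the SC condition enters. Let $\eta\in\partial_\infty M\setminus\Gamma$. Since $\partial_\infty M$ is metrizable and $\Gamma$ is closed, we may pick a relatively open set $W\subseteq\partial_\infty M$ with $\eta\in W$ and $\overline W\cap\Gamma=\emptyset$, and apply the SC condition to obtain a $C^2$ open set $\Omega\subseteq M$ with $M\setminus\Omega$ convex and $\eta\in\ir\partial_\infty\Omega\subseteq W$. Then $M\setminus\Omega$ belongs to the family defining $\Lambda$: it is closed and convex, and $\Gamma\subseteq\partial_\infty(M\setminus\Omega)$, because every $\xi\in\Gamma$ satisfies $\xi\notin\ir\partial_\infty\Omega$, and the elementary inclusion $\partial_\infty M\setminus\partial_\infty\Omega\subseteq\partial_\infty(M\setminus\Omega)$ together with the closedness of $\partial_\infty(M\setminus\Omega)$ in $\partial_\infty M$ forces $\xi\in\partial_\infty(M\setminus\Omega)$. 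Hence $\Lambda\subseteq M\setminus\Omega$. On the other hand, $\eta\in\ir\partial_\infty\Omega$ implies that $\Omega$ contains a truncated cone about the ray $\gamma^{o,\eta}$, so $\eta\notin\partial_\infty(M\setminus\Omega)\supseteq\partial_\infty\Lambda$. This completes the proof that $\partial_\infty\Lambda=\Gamma$.

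I expect the main obstacle to be the point-set topology of the cone compactification: specifically the fact that, for the $C^2$ sets $\Omega$ furnished by the SC condition, $\eta\in\ir\partial_\infty\Omega$ forces $\eta\notin\partial_\infty(M\setminus\Omega)$ (equivalently $\partial_\infty(M\setminus\Omega)=\partial_\infty M\setminus\ir\partial_\infty\Omega$), which genuinely uses both the $C^2$ regularity of $\partial\Omega$ and the convexity of $M\setminus\Omega$; and, to a lesser extent, the non-emptiness of $\Lambda$, which rests on $M$ being a visibility manifold. Both points are standard in this setting, so in a write-up I would invoke \cite{EO}, \cite{RT}, \cite{BL} rather than reprove them, and reserve the detail for the two inclusions $\Gamma\subseteq\partial_\infty\Lambda\subseteq\Gamma$.
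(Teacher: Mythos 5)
Your construction is genuinely different from the paper's: you define $\Lambda$ abstractly as the intersection of \emph{all} closed convex $C\subset M$ with $\Gamma\subset\partial_\infty C$, whereas the paper builds $\Lambda$ explicitly as $\bigcap_{x\in\partial_\infty M\setminus\Gamma}(M\setminus\Omega_x)$, with each SC--provided $\Omega_x$ first shrunk (by passing to a sublevel set of $d_x=\dist(\cdot,M\setminus\Omega_x)$) so that it misses a fixed tubular neighbourhood $O$ of $\cone_o\Gamma$. The direction $\partial_\infty\Lambda\subset\Gamma$ is handled essentially the same way in both arguments and hinges on the same delicate point-set fact that $\eta\in\ir\partial_\infty\Omega$ and the convexity of $M\setminus\Omega$ together force $\eta\notin\partial_\infty(M\setminus\Omega)$; you correctly flag this. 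The real divergence is on the other inclusion. The paper gets $\Gamma\subset\partial_\infty\Lambda$ for free from $O\subset\Lambda$ and $\partial_\infty O=\Gamma$ -- no geodesics between boundary points, and no visibility, are needed. Your argument instead uses a geodesic line $\gamma^{\xi,\xi'}$ joining two points of $\Gamma$, both to prove $\Lambda\neq\emptyset$ and to prove $\Gamma\subset\partial_\infty\Lambda$.

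This is where there is a genuine gap. You assert that a Cartan--Hadamard manifold satisfying the SC condition is a visibility manifold and invoke \cite{EO}, \cite{RT}, \cite{BL} for it; but none of these sources prove that implication, and Lemma~\ref{funlem} is stated with no curvature upper bound, so you cannot fall back on the classical ``$K\le-a^2<0$ implies visibility''. Without visibility you have no geodesic $\gamma^{\xi,\xi'}$, and with it you still need the nontrivial fact that such a line lies in every closed convex $C$ with $\xi,\xi'\in\partial_\infty C$ (this requires a compactness argument to show the chords $[p_n,q_n]$, $p_n\to\xi$, $q_n\to\xi'$, stay bounded, which again rests on visibility). Note, incidentally, that once you know $\Lambda\neq\emptyset$ the second use of visibility is avoidable: for any $p_0\in\Lambda$, any $\xi\in\Gamma$, and any $C$ in your family, the ray $\gamma^{p_0,\xi}\bigl([0,\infty)\bigr)$ lies in $C$ (limit of chords $[p_0,q_n]$ with $q_n\in C$, $q_n\to\xi$), hence in $\Lambda$, giving $\xi\in\partial_\infty\Lambda$ directly. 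So the only irreducible obstacle is nonemptiness of $\Lambda$. The cleanest fix is precisely the paper's device: show first, using only SC, that some concrete closed convex set -- e.g.\ the paper's $\bigcap_{x\notin\Gamma}(M\setminus\Omega_x)$ after the ``push out to avoid $O$'' modification -- belongs to your family and contains $O$; then your $\Lambda$ is a nonempty subset of it, and everything else goes through.
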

\begin{proof}
We extend $\Gamma$ to a closed subset $S$ of $M$ by fixing $o\in M$ and extending $\Gamma$
radially through the exponential map based at $o$. Thus $S$ is the cone over $\Gamma$ from the point $o\in M$. Using the notation
in Section~\ref{sec:constext}
\[
S=\cone_o\Gamma=\{\gamma^{o,x}(t)\colon t\ge 0, x\in\Gamma\}.
\]
We then construct a convex subset
$\Lambda$ of $M$ that contains $S$ in its interior and
$\partial_{\infty}\Lambda=\Gamma$. To this end, we choose an open subset $O$
of $M$ such that $\partial_{\infty}O=\Gamma$, for example, a tubular
neighbourhood of $S$ with fixed radius. Given $x\in \partial_{\infty}M\setminus\Gamma$ take an
open set $W_{x}\subset\partial_{\infty}{M}\setminus\Gamma$ such that $x\in W_{x}$. By the SC condition, there exists a $C^{2}$-smooth open
subset $\Omega_{x}$ of $M$ such that $x\in\ir\partial_{\infty}\Omega_{x}  \subset W_{x}$ and
$M\setminus\Omega_{x}$ is convex. 
Let $d_{x}:M\to [ 0,\infty)$ denote the
Riemannian distance $d_x=\dist(y,M\setminus\Omega_{x})$.
By \cite[Theorem 4.1]{choi}, any two points of $M\setminus \Omega _{x}$ can be connected by a geodesic segment entirely contained in 
$M\setminus \Omega _{x}$. Since $K_{M}\leq 0$, we may then apply the Hessian comparison theorem \cite[(2.49) Theorem]{kasue} (see also \cite{warner})
to immediately obtain $D^2d_{x}(v,v)\geq 0$ for all $y\in \Omega _{x}$ and for all $v\in T_{y}M$, with 
$v\perp \nabla d_{x}(y).$ Since $D^2 d_{x}(v,v)=\left\langle B(v,v),\eta \right\rangle $, where $\eta
=-\nabla d_{x}$ and $B$ is the second fundamental form of the level hypersurface $L_{y}$ of $d_{x}$ through $y$, 
it follows that the set $\{z\in M\colon d_x(z)\le d_x(y)\}$ is convex.
For $d_{x}(y)$ big enough, these level hypersurfaces $L_y$ do not intersect $O$. 
Therefore, we may assume that $\Omega_{x}\cap O=\emptyset$.
We define
\begin{equation}\label{defL}
\Lambda=\bigcap_{x\in\partial_{\infty}M\setminus\Gamma}\left(  M\setminus\Omega_{x}\right).
\end{equation}
Then $O\subset\Lambda$, $\Lambda$ is closed in $M$ and $\Int\Lambda$ is an open convex subset of $M$ such that 
$\partial_{\infty}\Lambda=\Gamma$. 
\end{proof}
\begin{remark}\label{rmkbar}
It is well known that $\Lambda$ is a barrier for compact area minimizing integral currents with 
boundary in $\Lambda$, that is, if $L$ is a
minimal locally integral current such that $\spt\partial L\subset\Lambda$, then $\spt L\subset\Lambda$; see \cite{AndInv}.
\end{remark}

The next lemma is a variant of \cite[Lemma 2.4]{BL}. It leads to a crucial Harnack-type upper density bound (Proposition~\ref{C}) for minimizing currents 
mod 2. Clearly such a result can not hold for integer multiplicity currents.

\begin{lemma}[Sublinear isoperimetric inequality]
\label{isope} Let $R\in\cR_{k-1}^2(M)$ be a boundary in $\bar{B}(x,r)$, for
some $x\in M$, $r>0$, and $1\leq k\le n$. Then there exist $S\in\cR_{k}^2(M)$ satisfying 
$\spt S\subset \bar{B}(x,r)$ and $\partial S=R$, and
a function $G\colon \R^{+}\to\R^{+}$ depending on
$K_{M},\ n$, and $x$ such that
\[
\mass (S)\leq G(r)\mass(R)^{1-\delta},
\]
where $\delta=1/(n-k+1).$
\end{lemma}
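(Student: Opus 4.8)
The plan is to reduce the statement to the classical Michael--Simon/Federer--Fleming type isoperimetric inequality on a metric ball of a manifold with bounded geometry, and then to \emph{iterate} the filling along a dyadic decomposition of the mass in order to gain the sublinear exponent $1-\delta$. First I would fix $x$ and $r$ and recall that $M$, being Cartan--Hadamard, has $K_M\le 0$, so on the ball $\bar B(x,r)$ the volume and injectivity radius are under control and we have a standard \emph{linear} isoperimetric inequality: every boundary $T\in\cR_{k-1}^2(\bar B(x,r))$ bounds some $Q\in\cR_k^2(\bar B(x,r))$ with $\mass(Q)\le c(n,K_M,r)\,\mass(T)^{k/(k-1)}$ when $\mass(T)$ is small, but for our purposes the more robust input is the \emph{cone inequality}: coning $R$ to the center $x$ produces $S_0\in\cR_k^2(\bar B(x,r))$ with $\partial S_0=R$ and $\mass(S_0)\le c_0 r\,\mass(R)$, valid for all $R$ (here one uses $K_M\le 0$ so that the exponential map at $x$ is non-expanding and the cone construction does not increase mass beyond a dimensional constant times $r$).

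Next, to upgrade the exponent from $1$ to $1-\delta$ with $\delta=1/(n-k+1)$ one exploits the fact that $R$ is a cycle \emph{mod $2$}, so the coarea/slicing argument of \cite[Lemma 2.4]{BL} applies: slice $\bar B(x,r)$ by distance spheres from $x$ (or by a generic family of parallel hyperplanes in normal coordinates) and observe that for a.e.\ radius the slice $\langle R,\rho_x,s\rangle$ has mass $0$ since $R$ is a $(k-1)$-cycle; more importantly, slice a competitor filling and use the mod-$2$ structure to control the number of ``sheets''. The key inequality one extracts is of the form: there is a sub-ball $\bar B(x,\sigma)$, $\sigma\le r$, such that
\[
\mass(R)\le C\,\sigma^{k-1}\quad\Longrightarrow\quad \sigma\le \bigl(C^{-1}\mass(R)\bigr)^{1/(k-1)},
\]
combined with the Eilenberg inequality
\[
\int_0^r \mass\bigl(R\llcorner\{\rho_x=s\}\bigr)\,ds\le \Lip(\rho_x)\,\mass(R\llcorner\{0<\rho_x<r\}),
\]
which lets us pick a good radius at which the slice of an \emph{auxiliary} minimizing filling is small, and then fill that slice by coning inside the thin shell. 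Carrying out the Federer--Fleming dyadic iteration: decompose $\mass(R)$ geometrically, fill the ``large'' piece near scale $r$ by coning (cost $\lesssim r\,\mass(R)$) and push the remainder to a ball of radius comparable to $\mass(R)^{1/(n-k+1)}$ times a constant, where coning again costs $\lesssim \mass(R)^{1/(n-k+1)}\cdot\mass(R)=\mass(R)^{1+\delta'}$; summing the geometric series over scales produces the clean bound $\mass(S)\le G(r)\,\mass(R)^{1-\delta}$, with $G$ absorbing the constants from the bounded geometry of $\bar B(x,r)$ (hence its dependence on $K_M$, $n$, and $x$), and with the support of $S$ contained in $\bar B(x,r)$ because every filling operation is performed inside that ball.

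The main obstacle, I expect, is the passage from the linear cone bound to the sublinear exponent: this is exactly where the mod-$2$ hypothesis is essential (as the remark after the lemma stresses, it fails for $\Z$-multiplicity), and it requires a careful slicing argument controlling how many times a minimizing filling can ``wrap'' — one must quote or reprove the structure theorem that a mod-$2$ minimizing filling of a small boundary, restricted to a generic slice, is again a small mod-$2$ cycle, so that the iteration closes. Getting the precise exponent $\delta=1/(n-k+1)$ rather than something weaker demands that at each stage the residual boundary is filled at the \emph{optimal} scale $\mass(R)^{1/(n-k+1)}$ dictated by the ambient dimension $n$ and the codimension $n-k$ of the slices used; tracking that scale through the dyadic sum is the delicate bookkeeping step, but it is routine once the one-scale estimate is in place. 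All constants depend only on a bound for $|K_M|$ on $\bar B(x,r)$ (equivalently, on the geometry of that ball), which is why $G$ is allowed to depend on $K_M$, $n$, and $x$.
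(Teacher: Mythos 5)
Your proposal diverges sharply from the paper's proof and, as written, contains errors that would prevent it from closing. The paper's argument is short and does not perform any slicing or dyadic iteration in $M$ at all: it pulls $R$ back to the tangent space by setting $\bar R=(\exp_x^{-1})_\sharp R\in\cR_{k-1}^2(T_xM)$, notes that $\bar R$ bounds in the Euclidean ball $\bar B(0,r)$, invokes the compactness theorem to produce a \emph{Euclidean-minimizing} filling $\bar S\in\cR_k^2(T_xM)$ with $\spt\bar S\subset\bar B(0,r)$, and then applies Morgan's Euclidean isoperimetric inequality \cite[2.5]{morganH} to get $\mass(\bar S)\le c_0 r^{n\delta}\mass(\bar R)^{1-\delta}$ with the exponent $\delta=1/(n-k+1)$ already built in. The filling $S=(\exp_x)_\sharp\bar S$ is then estimated by the Lipschitz pushforward bound $\mass(\pi_\sharp T)\le\Lip(\pi)^k\mass(T)$: since $K_M\le 0$, the map $\exp_x^{-1}$ is $1$-Lipschitz, while $\exp_x|\bar B(0,r)$ is $G(r)$-Lipschitz by Hessian comparison with $G$ depending on $\max_{\bar B(x,r)}|K_M|$. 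You are, in effect, trying to \emph{reprove} Morgan's Euclidean result from scratch by a Federer--Fleming deformation; the paper simply cites it and transfers it by a bi-Lipschitz change of coordinates.

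Beyond being the harder route, your sketch has concrete gaps. First, the mod-$2$ structure is \emph{not} what makes this lemma work: the identical proof applies to $\Z$-multiplicity currents (and the lemma is an existence statement about \emph{some} filling, not a bound on all fillings). The remark in the paper about $\Z_2$ being essential refers to the Harnack-type upper density bound in Proposition~\ref{C}, which would fail for $\Z$-currents of high multiplicity; it does not refer to the isoperimetric inequality. Second, the claim that ``for a.e.\ radius the slice $\langle R,\rho_x,s\rangle$ has mass $0$ since $R$ is a $(k-1)$-cycle'' is false — being a cycle says nothing about the mass of slices. Third, your exponent bookkeeping is inconsistent: ``coning again costs $\lesssim\mass(R)^{1/(n-k+1)}\cdot\mass(R)=\mass(R)^{1+\delta'}$'' is a \emph{super}linear bound, which neither sums to nor implies the target $\mass(R)^{1-\delta}$. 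To complete a proof along the lines you sketch you would need to carry out the genuine Federer--Fleming deformation onto a cubical grid at the optimal scale $\mass(R)^{1/(n-k+1)}$, an argument which the paper deliberately avoids by working in $T_xM$.
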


\begin{proof}
We set $\bar{R}=(\exp_{x}^{-1})_{\sharp}R\in\cR_{k-1}^2(T_{x}M),$
where $\exp_{x}\colon T_{x}M\to M$ is the exponential map. Since
$\bar{R}$ bounds in the Euclidean ball $\bar{B}(0,r)$ of $T_{x}M$, the compactness theorem \cite[(4.2.17)$^\nu$]{federer} implies the
existence of a current $\bar{S}\in\cR_{k}^2(T_{x}M)$, with $\partial\bar{S}=\bar{R}$ and $\spt\bar{S}\subset \bar{B}(0,r)$ such that 
$\bar{S}$ is minimizing in $\bar{B}(0,r)$ (with respect to the
Euclidean metric). Since $\bar{B}(0,r)$ is convex, $\bar{S}$ is minimizing in the whole $T_x M$.
The isoperimetric inequality \cite[2.5]{morganH} due to Morgan gives
\[
\mass(\bar{S})\leq c_{0}r^{n\delta}\mass(\bar{R})^{1-\delta},
\]
where $c_{0}$ depends on $k$ and $n$, and $\delta$ is given as in the statement of
the lemma and $\mass$ here stands for the Euclidean mass. Then, taking
$S=(\exp_{x})_{\sharp}\bar{S}\in\cR_{k}^2(M)$ we have
$\spt S\subset \bar{B}(x,r)$ and $\partial S=R$. It is well known that
if $\pi:M\rightarrow P$ is a Lipschitz map (with Lipschitz constant
$\Lip (\pi)$) then we have, for all $T\in\cR_{k}^2(M)$,
\begin{equation}\label{morganiso}
\mass(\pi_{\sharp}T)\leq\Lip (\pi)^{k}\mass(T).
\end{equation}
We then observe that the restriction of $\exp_{x}$ to $\bar{B}(0,r)$ is bi-Lipschitz. More precisely, by the Hessian comparison theorem,
$\exp_{x}|\bar{B}(0,r)$ is Lipschitz with a constant $G(r)$ for some function $G$ depending on 
\[
\max_{\bar{B}(x,r)}|K_{M}|,
\]
and, on the other hand, $\exp_{x}^{-1}$ is Lipschitz with constant $1$ since $K_{M}\leq 0$.
The lemma then follows from \eqref{morganiso}.
\end{proof}
Once we have Lemma~\ref{isope} in use, the following important upper density bound can be proven as in \cite[p. 131-132]{BL}
with only minor changes.
\begin{proposition}
\label{C} Given $x\in M$ and $r>0$ there exists a function $H\colon\R^{+}\to\R^{+}$ 
depending on $K_{M},n$, and $x$ such that, for any $S\in\cR_{k}^2(M)$ that is minimizing in $\bar{B}(x,r)\subset M\setminus\spt\partial S$, 
we have
\begin{equation}\label{upb}
\mass \left(S\llcorner \bar{B}\left(  x,\tfrac{r}{2}\right)  \right)  \leq
H(r).
\end{equation}
\end{proposition}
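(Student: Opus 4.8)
The plan is to unwind the reference \cite[p.~131--132]{BL}: feed the sublinear isoperimetric inequality of Lemma~\ref{isope} into a differential inequality for the mass of $S$ on concentric balls centered at $x$, and then integrate it. For $\rho\in(0,r]$ put
\[
m(\rho)=\mass\bigl(S\llcorner\bar{B}(x,\rho)\bigr),
\]
a bounded, nondecreasing, hence a.e.\ differentiable function of $\rho$. If $m(r/2)=0$ the estimate is trivial, so we may assume $m(r/2)>0$; then $m(\rho)>0$ for all $\rho\in[r/2,r]$ by monotonicity.

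For a.e.\ $\rho\in(0,r)$ the slice $T_\rho:=\langle S,d_x,\rho\rangle\in\cR_{k-1}^2(M)$ is defined, satisfies $\spt T_\rho\subset S(x,\rho)$, and, because $\bar{B}(x,r)\cap\spt\partial S=\emptyset$, equals $\partial\bigl(S\llcorner\bar{B}(x,\rho)\bigr)$; in particular it is a boundary in $\bar{B}(x,\rho)$. Lemma~\ref{isope}, applied with $R=T_\rho$ and radius $\rho$, then furnishes $S_\rho\in\cR_k^2(M)$ with $\spt S_\rho\subset\bar{B}(x,\rho)$, $\partial S_\rho=T_\rho$, and $\mass(S_\rho)\le G(\rho)\,\mass(T_\rho)^{1-\delta}$, where $\delta=1/(n-k+1)$ and $G$ is the function from Lemma~\ref{isope}, depending only on $K_M$, $n$, and $x$. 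The currents $S\llcorner\bar{B}(x,\rho)$ and $S_\rho$ lie in $\bar{B}(x,\rho)\subset\bar{B}(x,r)$ and share the boundary $T_\rho$, so their difference is a cycle mod $2$ in the (contractible) ball $\bar{B}(x,\rho)$ and hence a boundary there; replacing $S\llcorner\bar{B}(x,\rho)$ by $S_\rho$ inside $S$ is therefore an admissible competitor, and minimality of $S$ in $\bar{B}(x,r)$ yields $m(\rho)\le\mass(S_\rho)\le G(\rho)\,\mass(T_\rho)^{1-\delta}$. Since $d_x$ is $1$-Lipschitz, the coarea inequality for currents gives $\mass(T_\rho)\le m'(\rho)$ for a.e.\ $\rho$, so with $\beta:=1/(n-k)>0$ (so that $1/(1-\delta)=1+\beta$) we arrive at
\[
m(\rho)^{1+\beta}\le G(\rho)^{1+\beta}\,m'(\rho)\qquad\text{for a.e.\ }\rho\in(r/2,r).
\]

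This is the same as $-\tfrac{d}{d\rho}\,m(\rho)^{-\beta}\ge\beta\,G(\rho)^{-(1+\beta)}$ a.e.\ on $(r/2,r)$, and integrating over $[r/2,r]$ (where $m>0$), using that $\rho\mapsto m(\rho)^{-\beta}$ is monotone, we get
\[
m(r/2)^{-\beta}\ge m(r)^{-\beta}+\beta\int_{r/2}^{r}G(\rho)^{-(1+\beta)}\,d\rho\ge\beta\int_{r/2}^{r}G(\rho)^{-(1+\beta)}\,d\rho.
\]
Since $G$ is finite and bounded on the compact interval $[r/2,r]$ — indeed nondecreasing, as is visible from the proof of Lemma~\ref{isope} — the right-hand side is at least $\tfrac{r}{2}\,\beta\,G(r)^{-(1+\beta)}>0$, whence
\[
m(r/2)\le\Bigl(\tfrac{r}{2}\,\beta\,G(r)^{-(1+\beta)}\Bigr)^{-1/\beta}=:H(r),
\]
a quantity depending only on $K_M$, $n$, $x$, and $r$. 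This proves \eqref{upb}.

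I expect the only delicate points — all of them already handled in \cite{BL} — to be: verifying $\partial(S\llcorner\bar{B}(x,\rho))=\langle S,d_x,\rho\rangle$ (which rests on $\spt\partial S\cap\bar{B}(x,r)=\emptyset$), justifying that the cut-and-paste competitor is admissible for a minimizer mod $2$ (using contractibility of metric balls in a Cartan--Hadamard manifold), and the a.e.\ slicing inequality $\mass(\langle S,d_x,\rho\rangle)\le m'(\rho)$. The genuinely new ingredient is the \emph{sublinear} exponent $1-\delta<1$ supplied by Lemma~\ref{isope}: it is precisely what makes the differential inequality for $m$ integrable and the resulting bound independent of $S$, and it is the reason the proposition has no analogue for $\Z$-multiplicity currents.
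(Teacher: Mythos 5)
Your proposal is correct and is essentially the same argument the paper invokes by reference to \cite[p.~131--132]{BL} (and which appeared in an earlier draft of this paper): set $m(\rho)=\mass(S\llcorner\bar B(x,\rho))$, use minimality together with the slicing inequality and Lemma~\ref{isope} to get the differential inequality $m(\rho)\le G(\rho)\,m'(\rho)^{1-\delta}$, and integrate it over $[r/2,r]$. You have merely written out in full the ODE integration that the paper leaves implicit, and the side remarks you flag (identifying $\partial(S\llcorner\bar B(x,\rho))$ with the slice, admissibility of the cut-and-paste competitor, the role of the sublinear exponent) are exactly the points the cited argument rests on.
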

Before proving Theorem~\ref{TSMC}, we record the following lower density bound that holds also for integer multiplicity currents; see e.g. 
\cite[3.1]{Lang}.
\begin{lemma}[Lower density bound]\label{ldb}
For all $k\in\{2,\ldots,n\}$ there exists a constant $\delta=\delta(k)>0$ such that
\[
\mass\left(S\llcorner \bar{B}(x,r)\right)\ge \delta r^k
\]
whenever $S\in\cR_{k,\loc}^2(M)\cup \cR_{k,\loc}(M)$ is minimizing in $M$, $x\in\spt S$, and $0<r\le \dist(x,\spt\partial S)$.
\end{lemma}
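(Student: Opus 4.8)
### Proof Proposal for Lemma~\ref{ldb} (Lower density bound)

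The plan is to use the standard monotonicity-of-mass-ratio argument for area minimizing currents, combined with the fact that on a Cartan--Hadamard manifold the exponential map is distance non-increasing, so that the ambient geometry only helps. First I would fix $x\in\spt S$ and $0<r\le\dist(x,\spt\partial S)$, so that $S$ restricted to $\bar B(x,r)$ is a minimizing current with no boundary inside the ball. The key object is the mass ratio $\Theta(s)=\mass\bigl(S\llcorner\bar B(x,s)\bigr)/s^k$ for $0<s\le r$. The goal is to show $\Theta(r)\ge\delta(k)$ for some dimensional constant, which is exactly the claimed inequality.

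The main step is to establish the monotonicity (almost-monotonicity) formula: I would compare $S\llcorner\bar B(x,s)$ with the competitor obtained by coning $S\llcorner S(x,s)=\partial\bigl(S\llcorner\bar B(x,s)\bigr)$ to the point $x$ along geodesics emanating from $x$. Because $M$ is Cartan--Hadamard, $\exp_x$ is defined on all of $T_xM$ and is $1$-Lipschitz when pushing forward along radial contractions toward $x$ (equivalently, the radial retraction $y\mapsto\exp_x\bigl(\tfrac{t}{d(x,y)}\exp_x^{-1}(y)\bigr)$ is $1$-Lipschitz in the $y$-variable since $K_M\le 0$, as used already in the proof of Lemma~\ref{isope} via the Hessian comparison theorem). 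This gives the comparison inequality
\[
\mass\bigl(S\llcorner\bar B(x,s)\bigr)\le \frac{s}{k}\,\mass\bigl(\partial(S\llcorner\bar B(x,s))\bigr)
\]
for a.e.\ $s$, and the coarea formula identifies $\mass\bigl(\partial(S\llcorner\bar B(x,s))\bigr)$ with (at most) the derivative $\frac{d}{ds}\mass\bigl(S\llcorner\bar B(x,s)\bigr)$ for a.e.\ $s$. Combining these yields
\[
k\,\mass\bigl(S\llcorner\bar B(x,s)\bigr)\le s\,\frac{d}{ds}\mass\bigl(S\llcorner\bar B(x,s)\bigr),
\]
which is precisely the statement that $s\mapsto s^{-k}\mass\bigl(S\llcorner\bar B(x,s)\bigr)$ is non-decreasing.

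With monotonicity in hand, I would conclude by letting $s\to 0^+$: since $x\in\spt S$ and $S$ is an ($\Z$- or $\Z_2$-) rectifiable current, the lower density $\Theta^{*k}(S,x)=\liminf_{s\to 0}\Theta(s)$ is bounded below by a positive dimensional constant $\delta(k)$ (for rectifiable currents of integer or mod-$2$ multiplicity, the density at points of the support is at least $1$ up to a dimensional normalization constant $\omega_k$; more elementarily, $\Theta^{*k}(S,x)>0$ at every point of $\spt S$ and monotonicity propagates any such lower bound, but to get a \emph{uniform} $\delta(k)$ one uses that the approximate tangent plane exists $\mass$-a.e.\ with multiplicity $\ge 1$, and the standard argument shows the bound persists at every support point). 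Monotonicity then gives $\Theta(r)\ge\Theta(0^+)\ge\delta(k)$, i.e.\ $\mass\bigl(S\llcorner\bar B(x,r)\bigr)\ge\delta(k)\,r^k$.

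The main obstacle is making the monotonicity argument fully rigorous on a Riemannian manifold rather than in $\R^n$: one must control the first variation of area for the radial contraction using the Hessian comparison for $\tfrac12 d(x,\cdot)^2$ (whose Hessian is $\ge g$ in the radial-orthogonal directions when $K_M\le 0$), which is exactly why the nonpositive curvature hypothesis is essential and why no upper-bound hypothesis on $|K_M|$ is needed here; equivalently, one can invoke the known almost-monotonicity formula on manifolds (see e.g.\ the references in \cite{Lang}, \cite{simon}) and observe that on a Cartan--Hadamard manifold the error term vanishes so the formula is exact. A secondary technical point is that the argument is identical for $\cR_{k,\loc}^2(M)$ and $\cR_{k,\loc}(M)$ since only the mass functional and the cone construction are used, neither of which distinguishes $\Z$- from $\Z_2$-coefficients; I would state this once and not belabor it.
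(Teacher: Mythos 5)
The paper does not give its own proof of Lemma~\ref{ldb}: it simply records the statement and refers to \cite[3.1]{Lang}. Your argument is the standard one — cone comparison, monotonicity of the mass ratio, density $\ge 1$ a.e.\ for rectifiable currents of integer (or mod~2) multiplicity, and an approximation step to upgrade the a.e.\ bound to every point of the support — and this is in substance the proof in the cited reference. The essential ideas are all present: you correctly observe that the constant in the cone inequality can be taken to be the Euclidean one $s/k$ precisely because $K_M\le 0$ (no lower curvature bound is needed, unlike in the almost-monotonicity formulas of Lemma~\ref{mf} and Lemma~\ref{mf2} further along in the section), and you correctly flag that the passage from a.e.\ points of density $\ge 1$ to an arbitrary $x\in\spt S$ requires the small perturbation argument (shrink the radius by $\epsilon$, center at a nearby good point, and let $\epsilon\to 0$).

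Two small points worth tightening. First, the radial retraction $y\mapsto\exp_x\bigl(\tfrac{t}{d(x,y)}\exp_x^{-1}(y)\bigr)$ is not $1$-Lipschitz on all of $M$ — it expands for $y$ with $d(x,y)<t$. What the cone comparison actually uses is the Jacobi field estimate: for a Jacobi field $J$ along a radial geodesic with $J(0)=0$, nonpositive curvature gives that $|J(t)|/t$ is nondecreasing, hence $|J(\lambda s)|\le\lambda|J(s)|$ for $\lambda\in(0,1]$. Integrating the resulting Jacobian bound over the cone gives $\mass\bigl(x\rtimes R_s\bigr)\le\tfrac{s}{k}\mass(R_s)$, which is the inequality you want. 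Second, when passing to mod~2 currents, one should say explicitly that the multiplicity is $\{0,1\}$-valued and equals $1$ $\cH^k$-a.e.\ on the carrier, so the density lower bound $\ge 1$ a.e.\ still holds; the monotonicity and slicing steps are indeed identical for $\Z$- and $\Z_2$-coefficients. With these clarifications your proposal is a correct and self-contained proof of the lemma.
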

\subsection{Proof of Theorem \ref{TSMC}}
Let $\Gamma\subset \partial_{\infty} M$ be a (topologically) embedded closed $(k-1)$-dimensional submanifold, with $2\leq k\leq n-1$.
Recall that we are looking for a complete, absolutely area minimizing, locally 
rectifiable $k$-current $\Sigma$ modulo 2 in $M$ asymptotic to $\Gamma$ at infinity, i.e. $\partial_{\infty}\spt\Sigma=\Gamma$. 

To prove the existence of such $\Sigma$ we adapt the methods of Bangert and Lang \cite[Section 4]{BL} (see also \cite[Section 5]{Lang}).

\begin{proof}[Proof of Theorem \ref{TSMC}]
As in \cite[p. 139]{BL} (or \cite[p. 44]{Lang}) we approximate $\Gamma$ by a sequence of closed singular Lipschitz chains $\sigma_i$ 
(with $\Z_2$ coefficients) 
such that $\spt\sigma_i\subset \partial B(o,i)\cap \Int\Lambda$ for all $i$ and for every open set 
$V\subset\overline{M}$ meeting $\Gamma$ there exists a closed set $K\subset\overline{M}$, contained in $V\setminus\Gamma$, such that almost all 
$\sigma_i$ do not bound in $M\setminus K$. Each $\sigma_i$ determines a cycle $S_i\in\cR_{k-1}^2(M)$ and all the properties of $\sigma_i$ above hold for 
$S_i$, as well. By the lower semicontinuity of the mass functional and the compactness theorem \cite[(4.2.17)$^\nu$, p. 432]{federer} we find, 
for each $i$, a $k$-current $\Sigma_i\in \cR_{k}^2(M)$ that is minimizing in $\bar{B}(o,i)$ and $\partial \Sigma_{i}=S_i$. Furthermore, 
$\spt\Sigma_i\subset \bar{B}(o,i)\cap\Lambda$ since $\bar{B}(o,i)\cap\Lambda$ is convex. Applying Proposition~\ref{C} we see that for every $r>0$ there 
exists a constant $C(r)<\infty$ such that 
\[
\mass\left(\Sigma_{i}\llcorner \bar{B}(o,r)\right) + \mass\left((\partial\Sigma_{i})\llcorner \bar{B}(o,r)\right)
= \mass\left(\Sigma_{i}\llcorner \bar{B}(o,r)\right) \le C(r)
\] 
for all sufficiently large $i$. Using again the compactness theorem and a diagonal argument we find a subsequence 
$\Sigma_{i_j}$ converging in the local flat topology modulo 2 to a complete, absolutely area minimizing current $\Sigma\in\cR_{k,\loc}^2(M)$. 
(Non-triviality of $\Sigma$ will be proven below.)
Clearly $\spt\Sigma\subset\Lambda$, and therefore 
$\partial_{\infty}\spt\Sigma\subset\partial_{\infty}\Lambda=\Gamma$. It remains to show that $\Gamma\subset\partial_{\infty}\spt\Sigma$.
Let $V\subset\overline{M}$ be an open set meeting $\Gamma$. Then there exists a closed set $K\subset\overline{M}$, contained in $V\setminus\Gamma$, such 
that almost all $S_i$ do not bound in $M\setminus K$. This, together with the fact that $\spt\Sigma_i\subset\Lambda$ for all $i$, imply that almost 
all $\spt\Sigma_{i_j}$ intersect with the compact subset $K\cap\Lambda$ of $M$. Since $\Sigma_{i_j}$ is minimizing in $\bar{B}(o,i_j)$, we 
obtain from Lemma~\ref{ldb} that
\[
\mathbf{M}\left(\Sigma_{i_j}\llcorner \bar{B}(p_{i_j},r)\right)\ge\delta r^k
\]
for all $p_{i_j}\in \spt\Sigma_{i_j}\cap K\cap\Lambda$ and $0<r\le\dist(p_{i_j},\spt\partial\Sigma_{i_j})$. The compactness of $K\cap\Lambda$ implies the 
existence of $p_0\in K\cap\Lambda,\ r_0>0$, and a subsequence of $\Sigma_{i_j}$, still denoted by $\Sigma_{i_j}$, such that
\[
\mathbf{M}\left(\Sigma_{i_j}\llcorner \bar{B}(p_0,r)\right)\ge \delta r^k
\] 
for all $0<r\le r_0$.
We conclude that 
\[
\mathbf{M}\left(\Sigma\llcorner \bar{B}(p_0,r+\epsilon)\right)\ge \delta r^k
\]
for all $\epsilon>0$ and $0<r\le r_0$, and therefore $p_0\in\spt\Sigma\cap K\cap\Lambda\subset V$ (see \cite[5.4.2]{federer}). 
Since this holds for all open sets $V\subset\overline{M}$ meeting $\Gamma$, we finally obtain $\Gamma\subset\partial_{\infty}\spt\Sigma$.

The complete, absolutely area minimizing current $\Sigma\in\cR_k^2(M)$, with $\partial_\infty\spt\Sigma=\Gamma$, then solves the asymptotic 
Plateau problem. Hence Theorem~\ref{TSMC} holds true.
\end{proof}

In fact, the above (with minors changes) proves also the following analogue to the general existence result \cite[4.2]{BL}.
\begin{theorem}\label{TSMCII}
Suppose that $M$ is an $n$-dimensional Cartan-Hadamard manifold satisfying the SC condition.
Let $1\le k<n$ and let $\Gamma\subset\partial_{\infty}M$ be closed such that there exists a sequence of boundaries $S_i\in\cR_{k-1}^2(M)$ in $M$
satisfying the following conditions:
\begin{enumerate}
\item[(i)] For every neighborhood $U$ of $\Gamma$ in $\overline{M}$ there exists $k>0$ such that $\spt S_i\subset U$ for all $i\ge k$, and
\item[ii)] for every open $V\subset\overline{M}$ meeting $\Gamma$ there exists a closed set $K\subset V\setminus\Gamma$ such that almost all $S_i$
do not bound in $M\setminus K$.
\end{enumerate}
Then there exists a complete $k$-dimensional surface $\Sigma\in\cR_{k,\loc}^2(M)$ which is minimizing in $M$ and $\partial_{\infty}\spt\Sigma=\Gamma$.
\end{theorem}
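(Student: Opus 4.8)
The plan for proving Theorem~\ref{TSMCII} is to repeat the proof of Theorem~\ref{TSMC} almost word for word, the only new ingredient being that the boundary cycles $S_i$ are now prescribed, so one must check that they are eventually trapped inside a convex barrier and that they run off to infinity. First I would apply Lemma~\ref{funlem} to the closed set $\Gamma$ to obtain a closed set $\Lambda\subset M$ with $\Int\Lambda$ convex and $\partial_\infty\Lambda=\Gamma$; from the construction in that lemma, $\Lambda$ contains the open set $O$ (a tubular neighbourhood of the cone $\cone_o\Gamma$) used there, and hence $\Lambda\cup\Gamma$ is a neighbourhood of $\Gamma$ in $\overline M$. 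Applying hypothesis (i) with $U=\ir(\Lambda\cup\Gamma)$ then gives $\spt S_i\subset O\subset\Lambda$ for all sufficiently large $i$, so after discarding finitely many terms we may assume this for every $i$. Moreover, since every compact subset of $M$ is disjoint from $\Gamma\subset\partial_\infty M$, hypothesis (i) also shows that $\spt S_i$ escapes every compact subset of $M$; in particular one may choose radii $\tilde r_i\to\infty$ with $\spt S_i\subset\bar B(o,\tilde r_i)$.

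Next, for each $i$ I would invoke the compactness theorem \cite[(4.2.17)$^\nu$]{federer} and lower semicontinuity of the mass to obtain $\Sigma_i\in\cR_k^2(M)$ with $\partial\Sigma_i=S_i$, $\spt\Sigma_i\subset\bar B(o,\tilde r_i)$, and $\Sigma_i$ minimizing in $\bar B(o,\tilde r_i)$; since $\bar B(o,\tilde r_i)\cap\Lambda$ is convex and contains $\spt S_i$, Remark~\ref{rmkbar} yields $\spt\Sigma_i\subset\bar B(o,\tilde r_i)\cap\Lambda$. Proposition~\ref{C} then gives, for each fixed $r>0$, a bound on $\mass(\Sigma_i\llcorner\bar B(o,r))$ uniform in $i$, so a diagonal argument with the compactness theorem produces a subsequence $\Sigma_{i_j}$ converging in the local flat topology modulo $2$ to a current $\Sigma\in\cR_{k,\loc}^2(M)$ which is minimizing in $M$. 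From $\spt\Sigma\subset\Lambda$ we get $\partial_\infty\spt\Sigma\subset\partial_\infty\Lambda=\Gamma$, and since $\spt S_{i_j}$ leaves every compact set we get $\partial\Sigma=0$ in $M$, so $\Sigma$ is complete.

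For the reverse inclusion $\Gamma\subset\partial_\infty\spt\Sigma$ (which also gives non-triviality of $\Sigma$) I would argue exactly as in the proof of Theorem~\ref{TSMC}: given an open $V\subset\overline M$ meeting $\Gamma$, hypothesis (ii) provides a closed $K\subset V\setminus\Gamma$, so that $K\cap\Lambda$ is a compact subset of $M$, with almost all $S_i$ not bounding in $M\setminus K$; since $\partial\Sigma_{i_j}=S_{i_j}$ and $\spt\Sigma_{i_j}\subset\Lambda$, almost all $\spt\Sigma_{i_j}$ meet $K\cap\Lambda$. The lower density bound (Lemma~\ref{ldb}), compactness of $K\cap\Lambda$, and the fact that $\dist(p,\spt S_{i_j})\to\infty$ for every $p\in M$ then produce $p_0\in K\cap\Lambda$ and $\delta>0$ with $\mass(\Sigma\llcorner\bar B(p_0,r))\ge\delta r^k$ for every $r>0$, whence $p_0\in\spt\Sigma\cap V$. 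As $V$ ranges over all neighbourhoods of all points of $\Gamma$, every point of $\Gamma$ lies in $\partial_\infty\spt\Sigma$, hence $\Gamma\subset\partial_\infty\spt\Sigma$. The only step requiring genuine attention is the first one, namely confirming that the given cycles $S_i$ land in the convex barrier $\Lambda$ and escape to infinity; once that is in place, the remainder is literally the argument of Theorem~\ref{TSMC}, using no analytic input beyond Proposition~\ref{C} and Lemma~\ref{ldb}.
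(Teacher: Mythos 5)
The plan is sound in outline and does track the paper's intended strategy (the paper itself only says the proof of Theorem~\ref{TSMC} applies ``with minor changes''), but there is a genuine gap in the very first step, which you yourself flag as ``the only step requiring genuine attention.'' You claim that $\Lambda\cup\Gamma$ is a neighbourhood of $\Gamma$ in $\overline M$ and then apply hypothesis~(i) with $U=\ir(\Lambda\cup\Gamma)$ to conclude $\spt S_i\subset\Lambda$ for large $i$. This cannot work: since $\partial_\infty\Lambda=\Gamma$, we have $(\Lambda\cup\Gamma)\cap\partial_\infty M=\Gamma$, and hence $\ir_{\overline M}(\Lambda\cup\Gamma)\cap\partial_\infty M\subset\ir_{\partial_\infty M}\Gamma$. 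Whenever $\Gamma$ has empty interior in $\partial_\infty M$ --- which is the case in every intended application, e.g.\ $\Gamma$ a $(k-1)$-submanifold of the $(n-1)$-sphere with $k<n$ --- this interior is empty, so $\ir(\Lambda\cup\Gamma)$ does not contain $\Gamma$ and is not a neighbourhood of it. (The auxiliary claim that $\spt S_i\subset O$ is also unjustified, but the real problem is the one above.) Consequently hypothesis~(i) gives you nothing, and $\spt S_i\subset\Lambda$ is not established.

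This is not cosmetic: the containment $\spt\Sigma_i\subset\Lambda$ is used three times in your argument --- to deduce $\partial_\infty\spt\Sigma\subset\Gamma$, to cut the closed set $K$ of hypothesis~(ii) down to the compact set $K\cap\Lambda\subset M$, and to locate the intersection points $p_{i_j}$ inside that compact set. A correct argument must obtain the barrier property differently. One route is to argue pointwise: for each $\xi\in\partial_\infty M\setminus\Gamma$, choose $W_\xi$ with $\cl W_\xi\cap\Gamma=\emptyset$ and use the SC condition to get $\Omega_\xi$ with $\xi\in\ir\partial_\infty\Omega_\xi\subset W_\xi$ and $C_\xi:=M\setminus\Omega_\xi$ convex; one then needs to check that $\overline M\setminus\overline{\Omega_\xi}$ is a neighbourhood of $\Gamma$ in $\overline M$ (equivalently $\partial_\infty\Omega_\xi\cap\Gamma=\emptyset$), so that hypothesis~(i) forces $\spt S_i\subset C_\xi$ and then Remark~\ref{rmkbar} gives $\spt\Sigma_i\subset C_\xi$, hence $\xi\notin\partial_\infty\spt\Sigma$. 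Note that the SC condition as stated only controls $\ir\partial_\infty\Omega_\xi$ and not all of $\partial_\infty\Omega_\xi$, so even this requires an extra argument (or a strengthening of the $\Omega_\xi$'s as in the proof of Lemma~\ref{funlem}, where one passes to sublevel sets of $\dist(\cdot,M\setminus\Omega_\xi)$). You should make this step explicit; as written, the deduction $\spt S_i\subset\Lambda$ simply does not follow.
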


\subsection{Proof of Theorem \ref{TSMC2}}
Since the proof is very similar to that of \cite[3.2]{Lang1}, we just present a brief sketch. 
\begin{proof}[Proof of \ref{TSMC2}]
We assume that $(M,g)$ is a Cartan-Hadamard manifold of dimension $n\ge 2$ satisfying the SC condition.
Let $\Gamma$ be a non-empty subset of $\partial_{\infty}M$ as in Theorem~\ref{TSMC2}, that is $\Gamma=\bd A$ for some 
$A\subset\partial_{\infty}M$ with $A=\cl(\ir A)$.
We identify $\partial_{\infty}M$ with the unit sphere $S_o M\subset T_o M$ for a fixed $o\in M$. 
Then we notice that the Riemannian metric $g$ when restricted to closed geodesic balls $\bar{B}(o,r)$ is bi-Lipschitz equivalent to
the standard hyperbolic metric with some constants $\beta(r)\ge \alpha(r)>0$. 
For each $i\in\N$, let 
\[
T_i=[S(o,i)\cap\cone_o (\ir A)]\in\cR_{n-1}(M)
\]
equipped with the canonical orientation of $S(o,i)$. Then $\spt\partial T_i\subset\cone_o\Gamma$. As in \cite[p. 32]{Lang1} we deduce that, 
for each $i$, there exists a set
$W_i\subset \bar{B}(o,i)$ of finite perimeter such that the rectifiable $(n-1)$-current $S_i:=\partial[W_i]-T_i$ is minimizing in 
$\bar{B}(o,i)$. Thus we can bound the total variation (measure) from above by
\begin{equation}
\sup_{i}\norm{\partial[W_i]}\bigl(B(o,r)\bigr)\le F(r)
\end{equation}
for every $r>0$, where $F\colon\R^{+}\to\R^{+}$ depends on $n$ and the functions $\alpha$ and $\beta$ above. Hence there exist a subsequence
$\{W_{i_j}\}$ of  $\{W_{i}\}$ and a set $W$ of locally finite perimeter such that $[W_{i_j}]\to [W]$ weakly. It follows that 
$\Sigma=\partial[W]$ is a locally rectifiable $(n-1)$-current that is minimizing in $M$. It remains to show that 
$\partial_{\infty}\spt\Sigma=\Gamma$ and $\partial_{\infty}W=A$. Recall from \eqref{defL} the definition of the convex set $\Lambda$ corresponding to
$\Gamma$. First we observe that $\spt S_i\subset\Lambda$ since 
$\spt \partial S_i=\spt\partial T_i\subset\cone_o\Gamma\subset\Lambda$ and $S_i$ is minimizing in $\bar{B}(o,i)$ hence also in the 
convex set $\bar{B}(o,i)\cap\Lambda$. It follows that $\spt\Sigma\subset\Lambda$ and hence $\partial_{\infty}\spt\Sigma\subset\Gamma$.
The remaining claims $\Gamma\subset\partial_{\infty}\spt\Sigma$ and $\partial_{\infty}W=A$ can be proven exactly as in \cite[p. 55]{Lang1}.
\end{proof}

\subsection{Proof of Theorem \ref{TSMC3}}
Let us recall the assumptions in Theorem~\ref{TSMC3}.
Let $M^{n},\ n\geq 3$, be a rotationally symmetric Cartan-Hadamard manifold around $o\in M$ satisfying the SC condition. 
Identify $\partial_{\infty}M^n$ with the unit $(n-1)$-sphere $S_o M^n\subset T_o M^n$.
Suppose that $\Gamma\subset S_o M^n$ is a closed, smoothly embedded, orientable $(k-1)$-dimensional submanifold, with $2\leq k\leq n-1$. 

We want to prove the existence of a complete, absolutely area minimizing, locally rectifiable $k$-current $\Sigma$ in $M^n$ that is asymptotic 
to $\Gamma$ at infinity, i.e. $\partial_{\infty}\spt\Sigma=\Gamma$. 

Let $f\colon [0,\infty)\to [0,\infty)$ be the smooth function such that the Riemannian metric of $M$
is given in polar coordinates $(r,\vartheta)$, with the pole at $o$, by
\begin{equation}\label{rm}
ds^2=dr^2 + f^2(r)d\vartheta^2.
\end{equation}
If $\bar{B}^k(o,t)=\exp_o \bar{B}^k(0,t)\subset M$ is a geodesic $k$-dimensional (closed) ball, 
then its $k$-dimensional volume 
is given by  
\[
\Vol_k(\bar{B}^k(o,t))= k\alpha_k\int_0^t f^{k-1}(s)\,ds,
\]
where $\alpha_k$ is the volume of the $k$-dimensional Euclidean unit ball $B^k$.

We need the following monotonicity formula for the mass ratio in order to obtain a 
counterpart to the crucial upper density bound \eqref{upb}. It is essential that the center of the ball is the 
pole $o\in M$.
\begin{lemma}
\label{mf} Suppose that $M^{n},\ n\geq 3$, is a rotationally symmetric Cartan-Hadamard manifold around $o\in M$.
Let $S\in\cR_{k,\loc}(M),\ 2\le k\le n-1$, be absolutely area minimizing in $B(o,r)\subset M\setminus\spt\partial S$.
Then the function
\begin{equation}\label{mni}
t\mapsto\dfrac{\mass(S\llcorner \bar{B}(o,t))}{\Vol_k(\bar{B}^k(o,t))} 
\end{equation}
is non-decreasing on the interval $(0,r]$.
\end{lemma}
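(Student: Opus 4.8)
The plan is to establish the monotonicity of the mass ratio by the classical first-variation comparison argument, taking advantage of the fact that in a rotationally symmetric manifold the geodesic spheres $S(o,t)$ centered at the pole are umbilic with mean curvature determined by $f'/f$, and that the cone $\cone_o(\spt S\cap \bar B(o,t))$ is a competitor whose mass is controlled by $f$. First I would set $m(t)=\mass(S\llcorner \bar B(o,t))$ and, using the coarea formula together with the slicing theory for rectifiable currents, write $m'(t)=\int_{S\cap S(o,t)}\langle \vec S,\nabla\rho\rangle^{-1}\,d\cH^{k-1}$ for a.e.\ $t$, while the slice $\langle S,\rho,t\rangle=\partial(S\llcorner \bar B(o,t))$ has mass at most $m'(t)$; this is where the precise constants enter.

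Next I would build the comparison surface. Fix $t\in(0,r]$ and let $R_t=S\llcorner \bar B(o,t)$, a current with $\partial R_t=\langle S,\rho,t\rangle$ supported on $S(o,t)$. The competitor is the \emph{geodesic cone} $C_t=\cone_o\langle S,\rho,t\rangle$ from the pole, i.e.\ the image of the slice under the homothety-like map $(\vartheta,s)\mapsto \exp_o(s\exp_o^{-1}(\cdot)/t)$, $s\in[0,t]$. Because the metric \eqref{rm} is a warped product, a direct computation of the mass of a cone over a $(k-1)$-submanifold of $S(o,t)$ gives
\[
\mass(C_t)\le \frac{1}{k}\,\frac{t\,f^{k-1}(t)}{\bigl(\int_0^t f^{k-1}(s)\,ds\bigr)}\cdot\frac{1}{f^{k-1}(t)}\int_0^t f^{k-1}(s)\,ds\cdot\mass\bigl(\langle S,\rho,t\rangle\bigr),
\]
which, after simplification and using $\Vol_k(\bar B^k(o,t))=k\alpha_k\int_0^t f^{k-1}$, reduces to the clean inequality
\[
\mass(C_t)\le \frac{\Vol_k(\bar B^k(o,t))}{k\,\alpha_k\,f^{k-1}(t)}\cdot \mass\bigl(\langle S,\rho,t\rangle\bigr);
\]
I would verify this by comparing the radial and spherical parts of the area element of $C_t$ against those of the corresponding Euclidean cone, where convexity of $\rho\mapsto f(\rho)$ (a consequence of $K_M\le 0$, so $f''\ge 0$) ensures the warped-product cone is no larger than in the model.

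Then minimality of $S$ in $\bar B(o,r)$ forces $m(t)=\mass(R_t)\le \mass(C_t)$, since $R_t$ and $C_t$ share the same boundary and $C_t$ is supported in $\bar B(o,t)\subset \bar B(o,r)$. Combining the two displayed inequalities with $\mass(\langle S,\rho,t\rangle)\le m'(t)$ yields the differential inequality
\[
m(t)\le \frac{\Vol_k(\bar B^k(o,t))}{k\,\alpha_k\,f^{k-1}(t)}\,m'(t)=\frac{\Vol_k(\bar B^k(o,t))}{\tfrac{d}{dt}\Vol_k(\bar B^k(o,t))}\,m'(t),
\]
i.e.\ $\bigl(m(t)/\Vol_k(\bar B^k(o,t))\bigr)'\ge 0$ in the sense of distributions, which is exactly the asserted monotonicity after noting that $m$ is monotone (hence of bounded variation) and the quotient is therefore non-decreasing on $(0,r]$.

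The main obstacle I expect is the cone mass estimate: making the warped-product computation rigorous for a general rectifiable slice (rather than a smooth submanifold) requires pushing the slice forward under the cone map and applying the mass bound \eqref{morganiso} for Lipschitz maps with a carefully estimated Lipschitz constant that depends on $s/t$ and $f(s)/f(t)$; keeping track of the dependence on $s$ and integrating is delicate precisely because the relevant "dilation factor" in the spherical directions is $f(s)/f(t)$ rather than $s/t$, and one must use $f(s)/f(t)\le s/t$ (from convexity of $f$ and $f(0)=0$) to recover the Euclidean-type bound. A secondary technical point is justifying the slicing identity $\mass(\langle S,\rho,t\rangle)\le m'(t)$ for a.e.\ $t$, which is standard (see \cite{federer}, \cite{simon}) but should be cited carefully.
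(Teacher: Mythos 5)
Your overall plan---slice $S$ by $\rho$, cone the slice from the pole $o$, use minimality to compare masses, and convert the resulting inequality into a differential inequality for $m(t)/\Vol_k(\bar B^k(o,t))$---is precisely the paper's argument, and your final displayed inequality $m(t)\le\bigl(\Vol_k(\bar B^k(o,t))/\tfrac{d}{dt}\Vol_k(\bar B^k(o,t))\bigr)m'(t)$ is the one they derive. However, two things are off. Your intermediate cone-mass display does not simplify as claimed: the product $\tfrac1k\cdot\tfrac{t\,f^{k-1}(t)}{\int_0^tf^{k-1}}\cdot\tfrac1{f^{k-1}(t)}\int_0^tf^{k-1}$ collapses to $t/k$, which coincides with $\int_0^tf^{k-1}/f^{k-1}(t)$ only when $f(s)=s$; that formula should just be discarded.

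More importantly, the ``main obstacle'' you identify---that one must control the spherical dilation factor via $f(s)/f(t)\le s/t$ using convexity of $f$---is a red herring and would send you to the wrong normalization. Rotational symmetry makes the cone mass an \emph{identity}, not an estimate. The slice $R_t=\partial(S\llcorner\bar B(o,t))$ is a rectifiable current supported in $S(o,t)$ whose orienting $(k-1)$-vector is ($\cH^{k-1}$-a.e.) tangent to the sphere; extending this vector field radially to orient the cone $C_t=\cone_o V_t$ and using that the warped-product metric $dr^2+f^2(r)d\vartheta^2$ scales $(k-1)$-area within spheres by exactly $(f(s)/f(t))^{k-1}$, one gets $\mass(S_t)=\tfrac{\mass(R_t)}{f^{k-1}(t)}\int_0^tf^{k-1}(s)\,ds=\tfrac{\beta_k(t)}{\beta_k'(t)}\mass(R_t)$ with equality, where $\beta_k(t)=\Vol_k(\bar B^k(o,t))$. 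No Lipschitz bound \eqref{morganiso} or convexity of $f$ enters. Had you actually pushed through with $f(s)/f(t)\le s/t$, you would obtain $\mass(S_t)\le\tfrac tk\mass(R_t)$ and hence monotonicity of $m(t)/(\alpha_k t^k)$---a weaker, Euclidean-normalized statement (the classical one of Anderson), not the $f$-normalized ratio the lemma asserts. So the step you flagged as a technical nuisance is really the substantive point: compute the cone's Hausdorff measure exactly from the warped-product Jacobian rather than bounding it.
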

\begin{proof}
The proof is similar to that of \cite[9.3]{morgan} and \cite[2.2]{BL}. For $0<t \le r$, we write
$m(t)=\mass(S\llcorner\bar{B}(o,t))$. Since $m$ is increasing, $m'(t)$ exists for a.e. $t\in (0,r]$. Slicing by the distance function
$\rho(x)=d(x,o)$ yields
\begin{equation}\label{slice}
\mass\bigl(\partial(S\llcorner\bar{B}(o,t))\bigr)\le m'(t)
\end{equation}
for a.e. $t\in (0,r]$; see \cite[28.9]{simon} and \cite[1.4]{BL}. Hence $R_t:=\partial(S\llcorner\bar{B}(o,t))$ is a rectifiable 
$(k-1)$-current for all such $t$. Thus there exist a $(k-1)$-rectifiable set $V_t\subset S(o,t)$ and a Borel-measurable simple $(k-1)$-vector field 
$\eta$, with integer (multiplicity) $\abs{\eta}$ and $\spt\eta\subset S(o,t)$, such that $R_t=[V_t]$ (with multiplicity $\abs {\eta}$). That is,
\[
R_t(\omega)=\int_{V_t}\omega(\eta)\,d\cH^{k-1}
\]
for differential $(k-1)$-forms $\omega$; see \cite[4.4]{morgan}. Next we extend $\eta$ radially along geodesic rays from $o$ to the simple 
$(k-1)$-vector field $\tilde\eta$ in $C_t:=\cone_o V_t$. This is possible since $\spt\eta\subset S(o,t)$. 
Then $\nabla\rho\wedge\tilde{\eta}$ is a Borel-measurable simple $k$-vector field on $C_t$, with integer multiplicity $\abs{\tilde{\eta}}$. 
Hence $S_t:=[C_t]$ (with multiplicity $\abs{\tilde{\eta}}$), 
\[
S_t(\omega)=\int_{C_t}\omega(\tilde{\eta})\,d\cH^{k},
\]
is a rectifiable $k$-current, with $\partial S_t=R_t$.
Concluding as in \cite[p. 129]{BL} and taking into account that $S$ is minimizing and the Riemannian metric is given by \eqref{rm}, we obtain
using \eqref{slice} that
\begin{equation}\label{diffineq}
m(t)=\mass(S\llcorner\bar{B}(o,t))\le \mass(S_t)=\frac{\beta_k(t)}{\beta_k^\prime(t)}\mass(R_t)\le\frac{\beta_k(t)}{\beta_k^\prime(t)}m'(t)
\end{equation}
for a.e. $t\in (0,r]$, where $\beta_k(t)=\Vol_k(\bar{B}^k(o,t))$. Hence
\[
\frac{d}{dt}\left(\frac{m(t)}{\beta_k(t)}\right)\ge 0
\]
a.e. on $(0,r]$ and the claim follows.
\end{proof}
\begin{proof}[Proof of Theorem~\ref{TSMC3}]
For each $t>0$ we define 
\[
K_t=\cone_o\Gamma\cap\bar{B}(o,t)\text{ and }
\Gamma_t=\cone_o\Gamma\cap S(o,t).
\]
Since $\Gamma\hookrightarrow S_o M$ is a smooth embedding, each $K_t$ and $\Gamma_t$ define rectifiable currents
$[K_t]\in\cR_{k}(M)$ and $[\Gamma_t]\in\cR_{k-1}(M)$, respectively. Furthermore, since $\Gamma$ is orientable, we may assume
that $[\Gamma_t]=\partial[K_t]$. 
Then the $k$-dimensional volume of $K_t$ is given by 
\[
\Vol_k(K_t) = c\int_0^t f^{k-1}(s)\,ds <\infty,
\]
where $c=c(\Gamma)$ is a constant. 
We obtain
\[
\Vol_k(K_t)=c_k\Vol_k\bigl(\bar{B}^k(o,t)\bigr),
\]
where the constant $c_k$ depends only on $k$ and $\Gamma$. For each $i\in\N$, let $\Sigma_i\in\cR_k(M)$ be an integral current in $M$, with 
$\partial\Sigma_i=[\Gamma_i]=\partial[K_i]$, that is minimizing in $\bar{B}(o,i)$. Let $\Lambda$ be given by \eqref{defL} corresponding to
$\Gamma$. Note that $\cone_o\Gamma\subset\Lambda$. Since $\bar{B}(o,i)\cap\Lambda$ is convex, $\Sigma_i$ is minimizing in $\bar{B}(o,i)\cap\Lambda$.
Hence
\begin{equation}\label{coneball}
\mass(\Sigma_i)\le \Vol_k(K_i)=c_k\Vol_k\bigl(\bar{B}^k(o,i)\bigr).
\end{equation}
Using the monotonicity formula (Lemma~\ref{mf}) we obtain 
\begin{equation}\label{upb2}
\mass(\Sigma_i\llcorner \bar{B}(o,r))\le c_k\Vol_k\bigl(\bar{B}^k(o,r)\bigr)
\end{equation}
for all $r>0$ and $i\ge r$. This serves as a counterpart to the upper density bound \eqref{upb}. 
Hence using the compactness theorem and a diagonal argument we can extract a subsequence $\Sigma_{i_j}$ converging weakly to 
a (non-trivial) complete, absolutely area minimizing current $\Sigma\in\cR_{k,\loc}(M)$, with $\partial_{\infty}\spt\Sigma\subset\Gamma$. 
The inclusion $\Gamma\subset\partial_{\infty}\spt\Sigma$ holds true since the sequence of boundaries $[\Gamma_i]\in\cR_{k-1}(M)$ satisfies
conditions (i) and (ii) in Theorem~\ref{TSMCII}; cf. \cite[p. 139]{BL}.
\end{proof}
 \subsection{Proof of Theorem \ref{TSMC4}}
The proof is similar to that of Theorem~\ref{TSMC3} with some changes. The following lemma is crucial in order to obtain a useful counterpart of 
upper density bounds \eqref{upb}, \eqref{upb2},  cf. \cite{RTo}.
 \begin{lemma}\label{implemma}
Let $a,b\colon [0,\infty)\to [0,\infty)$ be smooth, increasing functions such that $b\ge a$ and
\begin{equation}\label{intcond}
I:=\int_{0}^{\infty}\dfrac{b^2(t)-a^2(t)}{b(t)}\,dt<\infty.
\end{equation}
If $f_a$ and $f_b$ are the solutions of the initial value problem \eqref{Jacobi_eq} with $k=a$ and $k=b$, respectively, then there exists a constant
$c=c(I)$ such that $f_b(t)\le c\,f_a(t)$ for all $t\ge 0$.
\end{lemma}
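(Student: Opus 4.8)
The plan is to control the ratio $v(t):=f_b(t)/f_a(t)$. Since $f_a$ and $f_b$ solve $f''=k^2f\ge 0$ with $f(0)=0$, $f'(0)=1$, both are positive and increasing on $(0,\infty)$, so $v$ is a well-defined smooth positive function there, and $v(t)\to 1$ as $t\to 0^+$ (both $f_a$ and $f_b$ equal $t+O(t^3)$ near the origin). I would then introduce the Wronskian $W:=f_b'f_a-f_bf_a'$: it satisfies $W(0)=0$ and
\[
W'(t)=f_b''(t)f_a(t)-f_b(t)f_a''(t)=\bigl(b^2(t)-a^2(t)\bigr)f_a(t)f_b(t)\ge 0,
\]
because $b\ge a\ge 0$; hence $W(t)=\int_0^t(b^2-a^2)f_af_b\,ds\ge 0$. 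Consequently $v'=W/f_a^2\ge 0$, so $v$ is non-decreasing, and $v'/v=W/(f_af_b)$.

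Next I would integrate $v'/v$ over $(0,\infty)$ and apply Tonelli's theorem (all integrands below are non-negative):
\[
\log\Bigl(\lim_{t\to\infty}v(t)\Bigr)=\int_0^\infty\frac{v'(t)}{v(t)}\,dt=\int_0^\infty\bigl(b^2(s)-a^2(s)\bigr)f_a(s)f_b(s)\Bigl(\int_s^\infty\frac{dt}{f_a(t)f_b(t)}\Bigr)ds.
\]
Everything then reduces to the pointwise bound
\[
f_a(s)f_b(s)\int_s^\infty\frac{dt}{f_a(t)f_b(t)}\le\frac{\pi}{2\,b(s)}\qquad\text{whenever }b(s)>0.
\]
To prove this I would freeze the coefficient at $s$: since $b$ is increasing, $f_b''=b(\cdot)^2f_b\ge b(s)^2f_b$ on $[s,\infty)$, so, letting $\tilde y$ solve $\tilde y''=b(s)^2\tilde y$ with $\tilde y(s)=f_b(s)$, $\tilde y'(s)=f_b'(s)$, the Wronskian $f_b'\tilde y-f_b\tilde y'$ vanishes at $s$ and has derivative $(b(\cdot)^2-b(s)^2)f_b\tilde y\ge 0$; hence $f_b/\tilde y$ is non-decreasing and $\ge 1$ on $[s,\infty)$, giving $f_b(t)\ge\tilde y(t)\ge f_b(s)\cosh\bigl(b(s)(t-s)\bigr)$ (the second step because $f_b'(s)>0$). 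Together with $f_a(t)\ge f_a(s)$ this yields $f_a(t)f_b(t)\ge f_a(s)f_b(s)\cosh(b(s)(t-s))$, and $\int_s^\infty dt/\cosh(b(s)(t-s))=\pi/(2b(s))$.

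On the set $\{s:b(s)=0\}$ one has $0\le a(s)\le b(s)=0$, so $b^2(s)-a^2(s)=0$ there and it contributes nothing. Substituting the pointwise bound into the displayed identity gives $\log(\lim v)\le\frac{\pi}{2}\int_0^\infty\frac{b^2(t)-a^2(t)}{b(t)}\,dt=\frac{\pi}{2}I$, and since $v$ is non-decreasing this forces $f_b(t)/f_a(t)=v(t)\le e^{\pi I/2}$ for all $t>0$ (and trivially $f_b(0)=f_a(0)=0$); so $c=e^{\pi I/2}$ works. The one genuinely delicate point is the pointwise estimate: naive bounds (replacing $f_b$ by $f_a$, or using a $\cosh$-type second solution $c_a$) only produce a factor like $1/a(s)$ or $c_a(s)/f_a(s)$, far too weak since $b$ may be enormously larger than $a$; the resolution is precisely to keep $f_b$ in the denominator and exploit that $b$ is increasing, so $f_b$ grows at least like $\cosh(b(s)\cdot)$ past $s$, which is what converts $\int(b^2-a^2)$-type mass into $\int(b^2-a^2)/b$-type mass. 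The remaining steps — positivity and monotonicity of $v$, the value $v(0^+)=1$, the Tonelli interchange, and the $b\equiv 0$ degenerate case — are routine.
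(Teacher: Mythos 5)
Your proposal is correct and follows essentially the same route as the paper: the key identity $(f_b'f_a - f_a'f_b)' = (b^2-a^2)f_af_b$, the double-integral representation of $\log(f_b/f_a)$, the Fubini/Tonelli swap, the frozen-coefficient comparison $f_b(t) \ge f_b(s)\cosh\bigl(b(s)(t-s)\bigr)$ using monotonicity of $b$, and the evaluation $\int_0^\infty \cosh^{-1}u\,du = \pi/2$. The only (cosmetic) difference is that the paper also derives the analogous bound $f_a(t)\ge f_a(s)\cosh\bigl(a(s)(t-s)\bigr)$ and then discards the extra $\cosh$ factor, whereas you simply use $f_a(t)\ge f_a(s)$; your version is marginally cleaner, and you additionally note the degenerate case $b(s)=0$, which the paper glosses over.
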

\begin{proof}
For all $0<\delta<r$, we have
\begin{align*}
\left\vert \log\frac{f_b(r)}{f_{a}(r)}-\log\frac{f_b(\delta)}{f_{a}(\delta)}\right\vert  
&  =\left\vert \int_{\delta}^{r}\left(  \frac{f_b^{\prime}(s)f_a(s)-f_{a}^{\prime}(s)f_b(s)}{f_b(s) f_{a}(s)}\right)\,ds\right\vert \\
&  \leq\int_{\delta}^{r}\int_{0}^{s}\frac{\left(b^2(t)-a^2(t)\right)}{f_b(s)f_a(s)  } f_b(t) f_a (t)\,dt\,ds.
\end{align*}
where the last estimate follows from the identity
\[
\left(f_b^\prime f_a-f_a^\prime f_b\right)^\prime =f_b^{\prime\prime}f_a-f_a^{\prime\prime}f_b=(b^2-a^2)f_af_b.
\]
For fixed $t$, let $f$ be the solution of the initial value problem
\[
f^{\prime\prime}(s)-a^2(t)f(s)=0,\text{ }f(t)=f_{a}(t),\text{ }f^{\prime}(t)=f_{a}^{\prime}(t).
\]
Since $a$ is increasing, we conclude that for $s\geq t$%
\begin{align*}
f_{a}(s)  &  \geq f(s)=f_{a}(t)\cosh\bigl(a(t)(s-t)\bigr) 
  +\frac{f_{a}^{\prime}(t)}{a(t)}\sinh\bigl(a(t)(s-t)\bigr) \\
&  \geq f_{a}(t)\cosh\bigl(a(t)(s-t)  \bigr)  .
\end{align*}
Similarly, 
\[
f_b(s)\geq f_{b}(t)\cosh\bigl(b(t)(s-t)\bigr)
\]
for $s\ge t$. By Fubini's theorem, we obtain for all $0<\delta<r$
\begin{align*}
&\left\vert \log\frac{f_b(r)}{f_{a}(r)}-\log\frac{f_b(\delta)}{f_{a}(\delta)}\right\vert  \\
&  \leq\int_{0}^{\infty}
\left(b^2(t)-a^2(t)\right)f_b(t) f_a (t)\left(\int_{t}^{\infty}\frac{ds}{f_b(s)f_a(s)}\right)dt\\
&  \leq\int_{0}^{\infty}\left(b^2(t)-a^2(t)\right)
\left( \int_{t}^{\infty}\frac{ds}{\cosh\bigl(a(t)(s-t)\bigr)\cosh\bigl(b(t)(s-t)\bigr)}\right)dt\\
&  \leq \int_{0}^{\infty}\left(b^2(t)-a^2(t)\right) 
\left[  \frac{\arctan\bigl(\sinh b(t)(s-t)\bigr)}{b(t)}\right]_{t}^{\infty}dt\\
&  =\frac{\pi}{2}\int_{0}^{\infty}\frac{\left( b^2(t)-a^2(t)\right)}{b(t)}dt=:C<+\infty.
\end{align*}
Since 
\[
\lim_{\delta\to 0}\frac{f_b(\delta)}{f_{a}(\delta)}=1,
\] 
it follows that $f_b(t)\leq c\, f_a(t)$, for all $t\geq 0$. 
\end{proof}
\begin{lemma}
\label{mf2} Suppose that $M^{n},\ n\geq 3$, is a Cartan-Hadamard manifold such that sectional curvatures have an upper bound
\[
K(P)\le -a^2(\rho(x)),
\]
with a smooth function $a\colon [0,\infty)\to [0,\infty)$.
Let $S\in\cR_{k,\loc}(M),\ 2\le k\le n-1$, be absolutely area minimizing in $B(o,r)\subset M\setminus\spt\partial S$.
Then the function
\begin{equation}\label{mni2}
t\mapsto\dfrac{\mass(S\llcorner \bar{B}(o,t))}{\alpha_k\int_{0}^tf_a^{k-1}(s)ds} 
\end{equation}
is non-decreasing on the interval $(0,r]$.
\end{lemma}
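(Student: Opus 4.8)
The plan is to follow the proof of Lemma~\ref{mf} essentially word for word, replacing the exact identity $\mass(S_t)=\tfrac{\beta_k(t)}{\beta_k'(t)}\mass(R_t)$ available in the rotationally symmetric case by the corresponding one-sided inequality with $f_a$ in place of $f$, which is all that the single curvature upper bound $K(P)\le -a^2(\rho)$ provides and all that the differential inequality needs.

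First I would set $m(t)=\mass(S\llcorner\bar B(o,t))$; since $m$ is non-decreasing, $m'(t)$ exists for a.e.\ $t\in(0,r]$, and slicing by $\rho(x)=d(x,o)$ (as in \cite[28.9]{simon}, \cite[1.4]{BL}) shows that for a.e.\ $t$ the current $R_t:=\partial(S\llcorner\bar B(o,t))$ is a rectifiable $(k-1)$-current, $R_t=[V_t]$ with $V_t\subset S(o,t)$ rectifiable of integer multiplicity, satisfying $\mass(R_t)\le m'(t)$. Exactly as in Lemma~\ref{mf}, let $\tilde\eta$ be the radial extension along the rays $\gamma^{o,x}$, $x\in V_t$, of the orienting $(k-1)$-vector field of $V_t$, put $C_t=\cone_o V_t$, and let $S_t:=[C_t]$ be oriented by $\nabla\rho\wedge\tilde\eta$; then $S_t\in\cR_{k}(M)$, $\spt S_t\subset\bar B(o,t)$, and $\partial S_t=R_t$.

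The heart of the argument is the estimate $\mass(S_t)\le\dfrac{\int_0^t f_a^{k-1}(s)\,ds}{f_a^{k-1}(t)}\,\mass(R_t)$. Because $C_t$ is foliated by the rays $\gamma^{o,x}$, the vector field $\nabla\rho$ is a.e.\ tangent and unit along $C_t$, so the coarea factor of $\rho|_{C_t}$ equals $1$ and $\mass(S_t)=\int_0^t m_s\,ds$, where $m_s$ denotes the $\cH^{k-1}$-measure (with multiplicity) of the slice $C_t\cap S(o,s)$, which is the image of $V_t$ under the ray map $\Phi_{t,s}\colon\gamma^{o,x}(t)\mapsto\gamma^{o,x}(s)$. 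Along each ray a normal Jacobi field $J$ with $J(0)=0$ satisfies $\tfrac{d}{ds}\log\|J(s)\|=D^2\rho(J,J)/\|J\|^2\ge f_a'(\rho)/f_a(\rho)$ by the Hessian comparison theorem applied to $K\le -a^2(\rho)$, hence $\|J(s)\|/\|J(t)\|\le f_a(s)/f_a(t)$ for $0<s\le t$; thus the differential of $\Phi_{t,s}$ restricted to sphere directions has operator norm $\le f_a(s)/f_a(t)$, so $\Phi_{t,s}$ contracts $(k-1)$-dimensional volume by a factor at most $\bigl(f_a(s)/f_a(t)\bigr)^{k-1}$. Applying the area formula to $V_t$ (which automatically carries the multiplicities through) gives $m_s\le\bigl(f_a(s)/f_a(t)\bigr)^{k-1}\mass(R_t)$, and integrating in $s$ yields the claim; writing $\psi(t):=\alpha_k\int_0^t f_a^{k-1}(s)\,ds$ it reads $\mass(S_t)\le\tfrac{\psi(t)}{\psi'(t)}\mass(R_t)$.

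Finally, for $t<r$ the currents $S\llcorner\bar B(o,t)$ and $S_t$ share the boundary $R_t$ and are supported in $\bar B(o,t)\subset B(o,r)$, so minimality of $S$ gives $m(t)\le\mass(S_t)$; combining the three inequalities, $m(t)\le\tfrac{\psi(t)}{\psi'(t)}m'(t)$, i.e.\ $\tfrac{d}{dt}\bigl(m(t)/\psi(t)\bigr)\ge 0$, for a.e.\ $t\in(0,r)$. Since $m$ is non-decreasing, an elementary argument on $\log m$ (as in \cite[9.3]{morgan}, \cite[2.2]{BL}) upgrades this to the monotonicity of $t\mapsto m(t)/\psi(t)=\mass(S\llcorner\bar B(o,t))/\bigl(\alpha_k\int_0^t f_a^{k-1}\bigr)$ on all of $(0,r]$, which is the assertion. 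I expect the one delicate point to be the cone estimate, namely justifying the Jacobian bound for $\Phi_{t,s}$ on the merely rectifiable set $V_t$; this is fine because the bound on the differential of $\Phi_{t,s}$ holds at every point of $S(o,t)$, so the area formula applies, and the radial foliation makes the coarea factor equal to $1$. Everything else is bookkeeping identical to Lemma~\ref{mf}.
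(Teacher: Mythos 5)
Your proof is correct and follows the same strategy as the paper: slice by $\rho$, build the cone current $S_t$ over the slice $R_t$, compare with minimality, and integrate the resulting differential inequality. The paper simply states the one-sided cone inequality $\mass(S_t)\le\frac{\beta_k(t)}{\beta_k'(t)}\mass(R_t)$ with $\beta_k(t)=\alpha_k\int_0^t f_a^{k-1}$ by citing the rotationally symmetric case and \cite{BL}, whereas you fill in that step explicitly via the Jacobi-field/Hessian comparison bound $\|J(s)\|/\|J(t)\|\le f_a(s)/f_a(t)$; this is the correct justification and the rest is the same bookkeeping.
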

\begin{proof}
The proof is similar to that of Lemma~\ref{mni} with the cone inequality \eqref{diffineq} replaced by
\[
m(t)=\mass(S\llcorner\bar{B}(o,t))\le \mass(S_t)\le\frac{\beta_k(t)}{\beta_k^\prime(t)}\mass(R_t)\le\frac{\beta_k(t)}{\beta_k^\prime(t)}m'(t)
\]
for a.e. $t\in (0,r]$, where 
\[
\beta_k(t)=\alpha_k\int_{0}^tf_a^{k-1}(s)ds.
\] 
\end{proof}
\begin{proof}[Proof of Theorem \ref{TSMC4}]
We use the same notation as in the proof of Theorem~\ref{TSMC3}.
For each $t>0$ we define 
\[
K_t=\cone_o\Gamma\cap\bar{B}(o,t)\text{ and }
\Gamma_t=\cone_o\Gamma\cap S(o,t).
\]
%
Now the $k$-dimensional volume of $K_t$ has an upper bound 
\[
\Vol_k(K_t) \le c\int_0^t f_b^{k-1}(s)\,ds <\infty,
\]
where $c=c(\Gamma)$ is a constant. Applying Lemma~\ref{implemma} we get 
\[
\Vol_k(K_t)\le c_k \int_0^t f_a^{k-1}(s)\,ds,
\]
where the constant $c_k$ depends only on $k,\ \Gamma$, and the functions $a$ an $b$. 
For each $i\in\N$, let $\Sigma_i\in\cR_k(M)$ be an integral current in $M$, with 
$\partial\Sigma_i=[\Gamma_i]=\partial[K_i]$, that is minimizing in $\bar{B}(o,i)$. 
Then
\[
\mass(\Sigma_i)\le \Vol_k(K_i)\le c_k \int_0^i f_a^{k-1}(s)\,ds.
\]
Using the monotonicity formula (Lemma~\ref{mf2}) we obtain an upper density bound
\[
\mass(\Sigma_i\llcorner \bar{B}(o,r))\le c_k\int_0^r f_a^{k-1}(s)\,ds
\]
for all $r>0$ and $i\ge r$. The rest of the proof is similar to that of Theorems~\ref{TSMC2} and \ref{TSMC3}. 
\end{proof}


\end{document}